     \theoremstyle{plain}
      \newtheorem{theorem}{Theorem}[section]
      \newtheorem*{theorem*}{Theorem}
      \newtheorem*{theorem-MainThm1}{Theorem \ref{Main Thm 1}}
      \newtheorem*{theorem-MainThm2}{Theorem \ref{Main Thm 2}}
      \newtheorem{proposition}[theorem]{Proposition}
      \newtheorem{corollary}[theorem]{Corollary}
      \newtheorem{lemma}[theorem]{Lemma}
 \theoremstyle{definition}
      \newtheorem{remark}[theorem]{Remark}
      \newtheorem{definition}[theorem]{Definition}
      \newtheorem{example}[theorem]{Example}
      \newcommand{\Z}{\mathbb Z}
	\newcommand{\Q}{\mathbb Q}
      \newcommand{\nil}{\varnothing}
	\newcommand{\defn}[1]{\textbf{{#1} }}
	\newcommand{\boundary}{\partial}
	\newcommand{\inter}[1]{\mathring{{#1}}}
	\newcommand{\wihat}[1]{\widehat{{#1}}}
	\newcommand{\mc}[1]{\mathcal{{#1}}}
	\newcommand{\ob}[1]{\overline{{#1}}}
	\newcommand{\st}{\operatorname{st}}
      \def\@setcopyright{}
      \def\serieslogo@{}
\begin{document}


   \title[Exceptional surgeries]{Exceptional surgeries on knots with exceptional classes}
   \author{Scott A. Taylor}

  \begin{abstract}We survey aspects of classical combinatorial sutured manifold theory and show how they can be adapted to study exceptional Dehn fillings and 2-handle additions. As a consequence we show that if a hyperbolic knot $\beta$ in a compact, orientable, hyperbolic 3-manifold $M$ has the property that winding number and wrapping number are not equal with respect to a non-trivial class in $H_2(M,\boundary M)$, then, with only a few possible exceptions, every 3-manifold $M'$ obtained by Dehn surgery on $\beta$ with surgery distance $\Delta \geq 2$ will be hyperbolic.
  \end{abstract}

   \date{\today}
\thanks{This paper originated from a conversation with Luis Valdez-S\'anchez at the School on Knot Theory and 3--Manifolds in honor of Francisco Gonzalez-Acu\~na at CIMAT in Guanajuato, Mexico. I am grateful to him for his encouragement and helpful comments. I am also grateful to Alex Rasmussen for helpful feedback on drafts of this paper and to Martin Scharlemann for the encouragement to revisit these arguments.}

   \maketitle


\section{Introduction}

A Dehn filling of a torus boundary component of a hyperbolic 3-manifold $N$ is \defn{exceptional} if it results in a non-hyperbolic 3-manifold $M$. Thurston showed that exceptional fillings truly are exceptional: for a fixed compact hyperbolic $N$, there are only finitely many exceptional fillings (see \cite{Agol, Lackenby-word}. As a result of the Geometrization Theorem \cite{Thurston, P1, P2, P3}, a compact, orientable, non-hyperbolic 3-manifold is either reducible, boundary-reducible, annular, toroidal, or a small Seifert fibered space. We say that the corresponding filling is reducible, boundary-reducible, etc. If we have two exceptional fillings we refer to them as X/Y for X and Y in the set 
\[\not\mathbb{H} = \text{\{reducible, boundary-reducible, annular, toroidal, small SFS\}}.\]

In the past few decades, a great deal of work has been done (see \cite{Go-survey} for a survey) to understand the relationship between distinct exceptional fillings on a fixed $N$.  Much of the work has focused on finding sharp upper bounds on the ``slope distance'' $\Delta$ between the filling slopes for particular choices of $X,Y \in \not\mathbb{H}$,  and then enumerating the specific situations in which the maximum value of $\Delta$ can be realized. Although much has been accomplished, there are still important open problems (for example the Cabling Conjecture \cite{GAS}.) Beyond the Cabling Conjecture, the situation when $X$ or $Y$ is toroidal has generated a lot of interest. In studying such fillings (see, for example, \cite{VS}), the nature of the essential torus and its relation to the core of the filling solid-torus often play an important role.  In this note, we show how a slight adaptation of methods used by Gabai \cite{G3} to prove the Property R and Poenaru conjectures; Scharlemann \cite{S2} in the study of the reducible/reducible and reducible/boundary-reducible cases; and Wu \cite{Wu-sutured} in the study of the reducible/annular case can be adapted to give very low upper bounds on slope distance for certain types of toroidal/Y fillings, for Y $\in \not\mathbb{H} - \{\text{small SFS}\}$. This paper has been written so that it should suffice as a gentle introduction to the sutured manifold techniques of Scharlemann (and subsequently, Wu). We have also made an effort to collect some well-known combinatorial techniques (e.g. Scharlemann cycles) so that the non-sutured manifold part of this paper is relatively self-contained. 

For convenience in stating our theorems, we establish the following definitions and conventions. $N$ is a compact, orientable 3-manifold with a torus boundary component  containing essential simple closed curves $a$ and $b$ intersecting minimally $\Delta \geq 1$ times.  The 3-manifolds $M'$ and $M$ are obtained by Dehn filling $\boundary N$ with slopes $a$ and $b$ respectively.  Usually $\alpha \subset M'$ and $\beta \subset M$ will denote the cores of the filling tori, although occasionally we allow them to be other 1-complexes. 

The \defn{$\beta$-norm} of a connected oriented surface $S \subset M$ transverse to $\beta$ is
\[
x_\beta(S) = \max(0, -\chi(S) + |S \cap \beta|).
\]
The $\beta$-norm of a disconnected surface is the sum of the $\beta$-norms of its components. This definition is, in fact, applicable whenever $\beta$ is a given 1-complex (not necessarily a knot). Perhaps the most important instance is when $\beta$ is taken to be the empty set, in which case we have
\[
x_\nil(S) = \max(0,-\chi(S))
\]
(for $S$ connected) and refer to it as the \defn{Thurston norm} of the surface $S$. The (possibly disconnected) surface $S$ is \defn{$\nil$-taut} if it is incompressible and minimizes Thurston norm; that is, if $S'$ is another oriented surface such that $[S',\boundary S]$ and $[S,\boundary S]$ are equal in $H_2(M,\boundary S)$ then $x_\nil(S) \leq x_\nil(S')$. Similarly, for any 1-complex $\beta$, the surface $S$ is \defn{$\beta$-taut} if $S-\beta$ is incompressible in $M -\beta$, if $S$ minimizes $\beta$-norm in the class $[S,\boundary S] \in H_2(M,\boundary S)$ and if $\beta$ always intersects $S$ with the same sign. 

By analogy with knots in solid tori, we can define the \defn{wrapping number} of a knot $\beta$ with respect to a class $\sigma \in H_2(M,\boundary M)$ to be the minimal geometric intersection number of $\beta$ with a $\nil$-taut surface representing $\sigma$ and the \defn{winding number} of $\beta$ with respect to $\sigma$ to be the absolute value of the algebraic intersection number of $\beta$ with a surface representing $\sigma$. Winding number is independent of the representative of $\sigma$.  Thus, for a class $\sigma \in H_2(M,T)$ the following two statements are equivalent:
\begin{itemize}
\item There is no representative of $\sigma$ which is both $\nil$-taut and $\beta$-taut.
\item The wrapping number and the winding number of $\beta$ with respect to $\sigma$ differ.
\end{itemize}
If $\sigma$ satisfies these statements we say that $\sigma$ is an \defn{exceptional class} in $M$ (with respect to $\beta$). We say that $(M,\beta)$ \defn{has an exceptional class} if there is an exceptional class in $H_2(M,\boundary M)$ with respect to $\beta$.

As a consequence of more general results, we prove:

\begin{theorem-MainThm2}[The exceptional surgery theorem]
Assume the following:
\begin{itemize}
\item $N$ is irreducible and boundary-irreducible
\item $H_2(M,\boundary M) \neq 0$, $M$ is irreducible, and $(M,\beta)$ has an exceptional class
\end{itemize}
Then the following hold:
\begin{enumerate}
\item $M'$ is boundary-irreducible.
\item If $M'$ has an essential sphere, then $M'$ has a lens space proper summand.
\item If $M'$ is irreducible and has an essential torus, then either $\Delta = 1$ or there is an essential torus bounding (possibly with a component of $\boundary M'$) a submanifold of $M'$ of Heegaard genus 2.
\item Assume that $M'$ is irreducible and atoroidal and that $\boundary M'$ has at most two components of genus 2 or greater. Then if $M'$ has an essential annulus, either $\Delta = 1$ or $\boundary M'$ is a single torus and $M'$ has Heegaard genus 2.
\end{enumerate}
\end{theorem-MainThm2}

\begin{remark}
Most all cable knots always have a longitudinal surgery producing a reducible 3-manifold with a lens space proper summand. Gordon and Luecke \cite[Theorem 3]{GL0} show that a reducing surgery on a knot in $S^3$ always produces a 3-manifold with a lens space proper summand. Our result proves an analogous theorem for knots $\beta \subset M$ having distinct winding and wrapping numbers with respect to a non-zero class in $H_2(M,\boundary M)$. Gordon and Luecke also show that a knot with irreducible exterior in $S^3$ \cite{GL-reducible1} or in a reducible 3-manifold \cite{GL-reducible2} with a non-trivial reducing surgery will have the surgery distance $\Delta = 1$. Does a similar result hold in our situation?
\end{remark}

\begin{remark}
It is always possible to create knots having toroidal surgeries by taking curves lying on sufficiently knotted genus two surfaces. The essential torus in the surgered manifold will intersect the core of the surgery torus  one or two times and the surgery distance (from the meridian of the knot) will be 1. Thus, the most interesting constructions of toroidal surgeries either have $\Delta \geq 2$ or have tori intersecting the core of the surgery torus three or more times.  Gordon and Luecke prove \cites{GL, GL2} that if surgery on a knot $\beta$ in $S^3$ with surgery distance $\Delta \geq 2$ produces a 3-manifold $M'$ with an essential torus, then $\Delta = 2$ and there is an essential torus in $M'$ intersecting $\alpha$ in exactly two points. They go on \cite{GL3} to show that, in fact, $\beta$ must be a Eudave-Mu\~noz knot \cite{EM}. Is a similar classification is possible in our case?
\end{remark}

\begin{remark}
An important special case of our theorem is the case when the exceptional class for $(M,\beta)$ can be represented by a torus. Such a torus will be ``non-positive'' (in the sense of \cite{GW-memoirs}) and ``non-polarized'' (in the sense of \cite{VS}). If such is the case, then the exterior of $\beta$ will have two toroidal fillings of distinct slopes.

 Gordon \cite{Go-conj} conjectured that if $N$ is hyperbolic and if $a$ and $b$ are slopes giving exceptional Dehn fillings, then $\Delta \leq 8$ and, unless $N$ is the exterior of the Figure 8 knot, then $\Delta \leq 7$. He also conjectured that unless $N$ is one of four particular manifolds, then $\Delta \leq 5$.  Lackenby and Meyerhoff \cite{LM} have shown the first part of Gordon's conjecture and Agol \cite{Agol} has shown that $\Delta \leq 5$ except for finitely many possibilities for $N$.  For particular types of exceptional fillings, sharp upper bounds on $\Delta$ are known (see, for example, \cite{Go-survey}.) Often, the maximum value for $\Delta$ is achieved only in very specific and identifiable situations, but elucidating precisely what happens is an area of active research. For example, Gordon \cite{Go-toroidal} has shown that for toroidal/toroidal exceptional surgeries $\Delta \leq 8$ and if $\Delta \geq 6$, then the situation is completely understood. Gordon and Wu \cite{GW-memoirs} continue this work and give a complete explanation of toroidal/toroidal exceptional surgeries with  $4 \leq \Delta \leq 5$. 
\end{remark}

Using the techniques we adapt for this paper, Scharlemann \cite{S2} produces very strong results concerning X/Y surgeries for X,Y $\in$ \{reducible, boundary-reducible\}. To simplify our exposition, we do not spend much time considering those possibilities and instead refer the reader to Scharlemann's paper \cite{S2}. In fact, this note could serve as an introduction to Scharlemann's work, since at present we are not concerned with tackling the Cabling Conjecture. Along the way, though, we use Heegaard splitting theory to improve the conclusions of some of the results. The papers \cite{LOT, Kang, GW-memoirs}, among others, use non-sutured manifold techniques to consider X/Y for X,Y $\in$ \{reducible, boundary-reducible, annular, toroidal\} and obtain larger bounds for $\Delta$. In many cases, the bounds obtained in those papers are sharp, and the situations realizing the maximal values of $\Delta$ are completely classified. In \cite{Wu-sutured}, Wu uses sutured manifold theory to consider reducible/annular exceptional surgeries. Theorems 3.3 and 3.4 from that paper consider X/Y exceptional surgeries with X $\in$ \{reducible, boundary-reducible\} and Y $\in$ \{toroidal, annular\}. Under slightly different hypotheses from those of Theorem \ref{Main Thm 2} above, Wu also obtains $\Delta = 1$. The sutured manifold techniques used in the proofs of those theorems are very different from those in this paper and rely heavily on essential laminations. Using sutured manifold techniques similar to those in this paper (and, of course, in \cite{S1,S2}), and a detailed analysis of Gabai discs, Wu, under certain mild hypotheses, obtains the bound $\Delta \leq 2$ for reducible/annular exceptional surgeries. This present paper can also be seen as an introduction to that part of Wu's paper.

Our results are also closely connected with a famous result \cite[Corollary 2.4]{G2} of Gabai's: If $T$ is a boundary component of  a compact, orientable, irreducible, atoroidal 3-manifold $N$ having $\boundary N$ the union of tori and if $H_2(N, \boundary N - T) \neq 0$, then at most one Dehn filling of $T$ can reduce the Thurston norm of a class of $H_2(N,\boundary N - T)$ or make $N$ reducible. Lackenby \cite{L} used Gabai's work as the basis for showing that, under certain somewhat technical hypotheses, if $\alpha \subset M'$ is a null-homologous knot and if a reducible 3-manifold $M$ is obtained by surgery on $\alpha$, then any closed surface $\ob{Q} \subset M'$ can be isotoped so that
\begin{equation}\label{Lackenby Ineq}
(\Delta - 1)|\ob{Q} \cap \alpha| \leq -\chi(\ob{Q})
\end{equation}
where $\Delta$ is the intersection number between the surgery slope and the meridian of $\alpha$. We observe that the statement that the Thurston norm of a class of $H_2(N, \boundary N - T)$ drops after Dehn filling is equivalent to the statement that this class is an exceptional class for $(M,\beta)$ where $\beta$ is the core of the filling solid torus. Thus, our results are in some sense a generalization of Lackenby's. In particular,  the assumption that $\beta$ is null-homologous in $M$ can be dropped.  It comes at the cost, however, that we need to consider the surfaces $\ob{Q} \subset M'$ up to the relation of ``rational cobordism'' which is more general than isotopy.  Our version of Lackenby's theorem is most naturally stated in terms of sutured manifold theory, and we do this in the next section. Here, we content ourselves with defining rational cobordism stating a simple consequence.

Two properly embedded oriented surfaces $\ob{Q}$ and $\ob{R}$ are \defn{rationally cobordant} if $\boundary \ob{Q} = \boundary \ob{R}$, if their interiors are disjoint, and if they cobound a 3-dimensional submanifold $W \subset M'$ such that the inclusions of each of $\ob{Q}$ and $\ob{R}$ into $W$ induce isomorphisms of rational homology groups. We extend the definition of  ``rationally cobordant'' to make it an equivalence relation on properly embedded surfaces. A surface $\ob{Q} \subset M'$ is \defn{rationally inessential} if it is rationally cobordant to an inessential surface. 

\begin{theorem-MainThm1}
Assume (A) - (D) in Section \ref{Apps} and also that $M$ is reducible or that $(M,\beta)$ has an exceptional class. Suppose that $M'$ contains a rationally essential closed surface $\ob{Q}$ of genus $g$. Then:
\[
(\Delta - 1)|\ob{Q} \cap \alpha| \leq -\chi(\ob{Q})
\]
\end{theorem-MainThm1}

\begin{remark} Assumptions (A) - (D) are similar to the hypotheses of Theorem \ref{Main Thm 2}. They rule out a narrow class of possibilities for $M$, $M'$, and $N$ and require that $H_2(M,\boundary M) \neq 0$. 
\end{remark}

Most of what we do applies not only to questions about Dehn filling, but also to questions about 2-handle addition. The techniques in this paper as applied to 2-handle addition are elaborated and generalized in \cite{T1}. A more directly analogous version of Lackenby's result for 2-handle addition is obtained in \cite{T2} (using much more sophisticated sutured manifold theory) and several applications of that result are given in \cite{T3}.

For reference, we extend the definition of ``exceptional class'' to arcs.
\begin{remark}
If $\beta \subset M$ is a properly embedded arc, a class $\sigma \in H_2(M,\boundary M)$ is \defn{exceptional} if no representative of $\sigma$ is both $\beta$-taut and $\nil$-taut. We cannot rephrase this in terms of wrapping and winding number, since we may be able to change the algebraic intersection number between $\beta$ and a surface representing $\sigma$ by isotoping $\boundary S$ across an endpoint of $\beta$.
\end{remark}

\section{Sutured Manifold Theory}\label{SMT}
Although most of the arguments in this paper are classical, their applications to questions about exceptional surgeries do not seem to be well-known. We have endeavored to write this paper so that those without prior exposure to sutured manifold theory can follow the essentials of the arguments.  For more detail or for definitions not given here see \cites{G1, G2, G3, S1, S2, S3}.

\subsection{Sutured Manifolds and Generalized Thurston Norms}

\begin{definition}
A \defn{sutured manifold} $(M,\gamma,\beta)$ consists of a compact, orientable 3-manifold $M$, a properly embedded 1-complex $\beta$ and a (possibly empty) collection of oriented simple closed curves $\gamma \subset \boundary M$ with regular neighborhood $A(\gamma)$ such that:
\begin{itemize}
\item The complement of the interior of $A(\gamma)$ in $\boundary M$ is the union of three (possibly disconnected) disjoint surfaces denoted $R_-(\gamma)$, $R_+(\gamma)$, and $T(\gamma)$.
\item $T(\gamma)$ is the union of tori.
\item Each component of $\gamma$ is adjacent to a component of both $R_-(\gamma)$ and a component of $R_+(\gamma)$.
\item $R_-(\gamma)$ has inward pointing normal orientation.
\item $R_+(\gamma)$ has outward pointing normal orientation
\item The orientation of each component of $\gamma$ coincides with the orientation on it induced by $R_-(\gamma)$ and $R_+(\gamma)$
\item $\beta$ is disjoint from the curves $\gamma \cup \boundary A(\gamma)$.
\end{itemize}
\end{definition}

\begin{example}
A compact, orientable 3-manifold $N$ whose boundary is the union of tori can be thought of as a sutured manifold $(N,\gamma,\nil)$ where $\boundary N = T(\gamma)$ and $\gamma = \beta = \nil$. If we Dehn fill a component of $\boundary N$, obtaining a 3-manifold $M$, and let $\beta$ be the core of the filling torus, then $(M, \gamma, \beta)$ is a sutured manifold with $\gamma = \nil$.
\end{example}

\begin{definition}
A sutured manifold $(M,\gamma,\beta)$ is \defn{$\beta$-taut} if:
\begin{itemize}
\item $M - \beta$ is irreducible
\item No edge of $\beta$ has both endpoints in $R_-$ or both endpoints in $R_+$
\item $R_-(\gamma)$, $R_+(\gamma)$, and $T(\gamma)$ are all $\beta$-taut.
\item $\beta$ is disjoint from $A(\gamma) \cup T(\gamma)$
\end{itemize}
\end{definition}

\begin{example}
If $N$ is a compact, orientable, irreducible 3-manifold with $\boundary N$ the union of tori, then $(N,\nil,\nil)$ is a $\nil$-taut sutured manifold. If $M$ is the result of Dehn filling a boundary component of $N$ and if $\beta$ is the core of the filling solid torus, then $(M,\nil,\beta)$ is a $\beta$-taut sutured manifold. The manifold $(M,\nil,\nil)$ will be $\nil$-taut if and only if $M$ is irreducible.
\end{example}

\begin{example}
If $(N,\gamma,\beta)$ is a sutured manifold and if $M$ is the result of attaching a 2-handle to a curve $\gamma'$ in $\gamma$, then $(M,\gamma - \gamma', \beta \cup \beta')$ is another sutured manifold, where $\beta'$ is the cocore of the 2-handle. The sutured manifold $(N,\gamma,\beta)$ is $\beta$-taut if and only if the sutured manifold $(M,\gamma - \gamma', \beta \cup \beta')$ is $(\beta \cup \beta')$-taut \cite[Lemma 2.3]{S2}.
\end{example}

The following result of Scharlemann will be useful to us when we apply a sutured manifold result outside the category of suture manifolds.

\begin{proposition}[{\cite[Prop. 5.9]{S2}}]\label{Scharl-sutures}
Let $N$ be an irreducible, boundary-irreducible, compact, orientable 3-manifold and suppose that $\boundary N$ contains at most two non torus components. If:
\begin{enumerate}
\item If $c \subset \boundary N$ is a simple closed curve then there exists a taut sutured manifold structure on $N$ for which $c \subset R(\gamma)$.
\item If $c_1$ and $c_2$ are disjoint, essential, simple closed curves on $\boundary N$ which are non-coannular, then there is a taut sutured manifold structure for which $c_1$ and $c_2$ lie in $R(\gamma)$.
\end{enumerate}
\end{proposition}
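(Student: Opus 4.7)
The plan is to construct the required sutured manifold structure $(N,\gamma,\nil)$ by specifying the sutures explicitly on each boundary component. Every torus component of $\boundary N$ can be placed in $T(\gamma)$ at no cost to tautness, so the content of the proposition lies entirely in choosing appropriate sutures on the at most two non-torus components.

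For part (1), let $F$ be the boundary component containing $c$. When $F$ is a torus, take as sutures on $F$ two oppositely oriented parallel copies of a simple closed curve disjoint from $c$ (for instance, a small push-off of $c$ itself); this places $c$ inside a complementary annulus, i.e.\ inside $R(\gamma)$. When $F$ has higher genus, take the sutures on $F$ to be a collection of disjoint essential simple closed curves chosen disjoint from $c$ so that $c$ lies in a single component of the complement and so that the resulting $R_+$ and $R_-$ admit consistent orientations. For each other non-torus component of $\boundary N$, choose any taut system of sutures, for example oppositely oriented parallel copies of an essential non-separating curve.

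For part (2), apply the construction of (1) separately to each component when $c_1$ and $c_2$ lie on different components of $\boundary N$. When they lie on a common component $F$, choose the sutures on $F$ to be disjoint from $c_1 \cup c_2$ while retaining the form of the single-curve construction. The non-coannular hypothesis is precisely what ensures that no obstruction prevents both curves from simultaneously lying in $R(\gamma)$ while the resulting $R_\pm$ remains norm-minimizing.

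The main obstacle in both parts is verifying tautness of the constructed structure, specifically Thurston-norm minimization of $R_\pm$. Incompressibility of $R_\pm$ in $N$ is essentially immediate from boundary-irreducibility of $N$ combined with the essentiality of each suture curve. For norm minimization, I expect to appeal to Thurston's existence theorem for norm-minimizing surfaces together with a cut-and-paste argument: a hypothetical norm-reducing competitor $R'$ for some component of $R_\pm$ could be combined with $R_\pm$ to force either an essential sphere in $N$ (ruled out by irreducibility), a compressing disk for $\boundary N$ (ruled out by boundary-irreducibility), or, in case (2), an essential annulus bounded by $c_1 \cup c_2$ (ruled out by non-coannularity). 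The hypothesis that $\boundary N$ has at most two non-torus components is what keeps the combinatorics of the cut-and-paste analysis manageable and ensures that these three possibilities exhaust the obstructions.
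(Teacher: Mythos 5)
The paper does not prove this proposition: it cites it verbatim as \cite[Prop.\ 5.9]{S2} and immediately moves on to Lackenby's variant, so there is no in-paper argument to compare yours against. What I can do is assess whether your sketch would actually establish the result, and there I see a genuine gap.

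The gap is exactly at the step you flag as ``the main obstacle'': Thurston-norm minimization of $R_\pm$. Your trichotomy---that a norm-reducing competitor $R'$ for a component of $R_\pm$ would produce an essential sphere, a compressing disc for $\boundary N$, or an essential annulus cobounded by $c_1$ and $c_2$---is not a real dichotomy. Failure of norm-minimization means only that $R'$ has smaller $\chi_-$ than $R_\pm$ in the appropriate relative homology class; no amount of cutting and pasting $R'$ with $R_\pm$ forces one of those three degenerate surfaces to appear. (Incompressibility of $R_\pm$ does essentially follow from $\boundary$-irreducibility once the suture curves are chosen essential in $\boundary N$, so that part is fine, but incompressibility is far weaker than norm-minimization.) More basically, the plan of ``choose any system of disjoint essential curves on each non-torus boundary component, arranged so that $c$ sits in one complementary piece and the complement two-colors consistently'' will generally \emph{not} produce a taut sutured manifold; which curves you draw matters enormously, and there is no elementary selection rule. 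This is precisely why Scharlemann's actual proof, and the Gabai--Lackenby theory it rests on (Theorem~\ref{Lack-sutures} in this paper), proceeds indirectly: one first establishes the \emph{existence} of some taut sutured manifold structure (via sutured manifold hierarchies, decompositions, and Gabai's foliation technology, not a direct norm computation), and then performs controlled modifications of the sutures that are shown to preserve tautness, arranging $c$, or $c_1 \cup c_2$, into $R(\gamma)$ along the way. The ``at most two non-torus components'' hypothesis and the ``non-coannular'' hypothesis enter that modification argument at specific technical points, not as a generic device for ``keeping the combinatorics manageable.'' As written, your proposal assumes the hard part rather than proving it; you would need to either reproduce a hierarchy/decomposition argument in the spirit of \cite{G3} and \cite{S2}, or cite Scharlemann's Proposition~5.9 directly, as the paper itself does.
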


Lackenby adapts Scharlemann's methods and proves:
\begin{proposition}[{\cite[Theorem 2.1]{L2}}]\label{Lack-sutures}
Let $N$ be a compact orientable irreducible 3-manifold, possibly with boundary. Then there is a taut sutured manifold structure on $N$.
\end{proposition}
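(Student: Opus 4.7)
The plan is to construct sutures $\gamma \subset \boundary N$ together with an assignment of $\boundary N \setminus A(\gamma)$ to $R_\pm(\gamma)$ and $T(\gamma)$ so that $(N,\gamma,\nil)$ is a $\nil$-taut sutured manifold. Every torus boundary component of $N$ will be placed in $T(\gamma)$. If $\boundary N$ is either empty or a union of tori, the structure $(N,\nil,\nil)$ with $T(\gamma) = \boundary N$ is already $\nil$-taut by the earlier examples, so the substantive task is to suture the non-torus components of $\boundary N$. (If $\boundary N$ contains a sphere component, then irreducibility of $N$ forces $N$ to be a ball, and a single equator suture on $\boundary N$ gives a $\nil$-taut structure; I ignore this degenerate case below.)

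First I would reduce to the boundary-irreducible case. If a boundary component $F$ of $N$ is compressible in $N$, choose the sutures $\gamma \cap F$ so that they intersect the boundary of every disk in a maximal disjoint family of non-parallel compressing disks for $F$ transversely in at least two essential points. Such a curve system on $F$ exists because the collection of boundary slopes involved is finite up to isotopy on $F$. This choice guarantees that $R_\pm(\gamma)$ admits no compressing disk in $N$. With boundary-reducibility thus handled, I would extend Proposition \ref{Scharl-sutures} beyond its restriction to at most two non-torus components by induction on that number: the base case is Scharlemann's result; the inductive step selects one non-torus boundary component $F$, places essential simple closed curves on $F$ that partition it into subsurfaces each having negative Euler characteristic and no disk component, and then applies the induction hypothesis to the complement.

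The main obstacle is verifying Thurston-norm minimization of $R_\pm(\gamma)$. Incompressibility is forced by the suture construction together with the irreducibility of $N$, but norm-minimization requires a more delicate cut-and-paste argument: given any competing surface $S$ in the same class $[R_\pm,\boundary R_\pm] \in H_2(N,\boundary R_\pm)$ with $x_\nil(S) < x_\nil(R_\pm)$, one would double-curve sum $S$ with parallel copies of $R_\pm$ to produce a new representative whose components either collectively have norm no larger than that of $R_\pm$ (contradicting strict inequality) or else contain a compressing disk for $\boundary N$, contradicting irreducibility of $N$ together with the careful placement of $\gamma$.
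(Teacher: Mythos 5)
The paper states this result as a citation of Lackenby's Theorem 2.1 and does not reprove it, so what follows is an assessment of your argument on its own terms rather than a comparison with a proof in the text.

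Your argument has a concrete gap in the incompressibility step. You choose $\gamma$ to meet the boundaries of a maximal disjoint family of non-parallel compressing disks for a compressible boundary component $F$, and then assert that $R_\pm(\gamma)$ admits no compressing disk. This does not follow. A maximal \emph{disjoint} family of compressing disks is finite, but it does not exhaust the compressing disks of $F$ up to isotopy (a genus-two handlebody already has infinitely many isotopy classes of compressing disks, while a maximal disjoint non-parallel family has only three). A compressing disk $D$ for $R_+(\gamma)$ could have $\boundary D$ disjoint from $\gamma$ yet not isotopic to the boundary of any disk in your chosen family. Worse, $\boundary D$ can be inessential in $F$ while essential in the subsurface $R_+(\gamma)$; such a $D$ is not a compressing disk for $F$ at all and so is invisible to your construction. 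Irreducibility of $N$ lets you argue something about the sphere $D \cup D'$ where $D' \subset F$, but that requires an additional argument you have not supplied, and it does not resolve the first problem.

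The induction over non-torus boundary components is also not set up coherently. Proposition~\ref{Scharl-sutures} constructs a suture system on \emph{all} of $\boundary N$ under the hypothesis that at most two components are non-tori; it is not a statement about extending a partial suture system. Placing sutures on one chosen component $F$ does not produce a smaller instance of Scharlemann's hypotheses, and ``applying the induction hypothesis to the complement'' has no clear meaning when you are only decorating $\boundary N$, not cutting $N$. Finally, the norm-minimization step is the genuinely hard part of this theorem, and your one-sentence double-curve-sum sketch does not engage with it: the double-curve-sum argument of Gabai and Scharlemann needs the sutures to satisfy conditions stronger than merely blocking compressions, and verifying those conditions is precisely what occupies most of Scharlemann's proof of Proposition~\ref{Scharl-sutures} and of Lackenby's Theorem 2.1. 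As written, your proposal reduces the theorem to claims that are themselves at least as hard as the theorem.
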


In \cite[Lemma 4.1]{T1}, the present author combines Scharlemann and Lackenby's work and produces yet another method for placing sutures, one which is particularly well-adapted to problems concerning 2-handle addition. We will not need it in this paper.

\subsection{Parameterizing Surfaces}

Parameterizing surfaces are objects that allow sutured manifold theory to be applied to questions about the intersection between surfaces and knots or arcs. Inequalities such as Lackenby's inequality \eqref{Lackenby Ineq} or the inequality in Theorem \ref{Main Thm 2} arise from the ``index'' or ``sutured manifold norm'' of a parameterizing surface.

Suppose that $(M,\gamma,\beta)$ is a sutured manifold and let $N = M - \inter{\eta}(\beta)$ be the exterior of $\beta$. For each edge $e$ (but not loop) of $\beta$, let $A(e) \subset \boundary\eta(\beta)$ be the corresponding meridional annulus. For a simple closed curve $\alpha$ in $\boundary N$, we let $\mu(\alpha)$ denote the total number of spanning arcs in $A(e)$, over all edges of $\beta$.

A properly embedded orientable surface $Q \subset N$ is a \defn{parameterizing surface} if the following hold:
\begin{enumerate}
\item[(P1)] $\boundary Q$ is transverse to $\gamma$
\item[(P2)]  if $e$ is an edge of $\beta$, then each component of $\boundary Q \cap A(e)$ is either a circle or a spanning arc
\item[(P3)] no component of $Q$ is a sphere or disc disjoint from $\gamma \cup \boundary \eta(\beta)$. 
\end{enumerate}
The index $I(Q)$ of a parameterizing surface $Q$ is defined to be:
\[
I(Q) = -2\chi(Q) + |\boundary Q \cap \gamma| + \mu(Q)
\]
where $\mu(Q)$ is, by definition, $\mu(\boundary Q)$. 

\begin{example}
Suppose that $M$ is a compact, orientable 3-manifold with $\boundary M$ the union of tori and that $\beta \subset M$ is a link, so that $(M,\nil,\beta)$ is a sutured manifold. If $Q$ is any parameterizing surface in $(M,\nil,\beta)$, then $I(Q) = -2\chi(Q)$. Consequently, if $\ob{Q}$ is an orientable surface in a 3-manifold $M'$ obtained by Dehn surgery on $\beta$, then (as long as $\ob{Q}$ is not a sphere or disc contained in $N= M - \inter{\eta}(\beta)$) $Q = \ob{Q} \cap N$ is a parameterizing surface for $(M,\nil,\beta)$ with index equal to $I(Q) = -2\chi(\ob{Q}) + 2|\ob{Q} \cap \alpha|$, where $\alpha$ is the core of the surgery solid torus.
\end{example}

\begin{example}
Suppose that $(N,\gamma,\nil)$ is a sutured manifold and that $Q \subset N$ is a parameterizing surface such that all components of $\boundary Q$ are parallel to a simple closed curve $a \subset \boundary N$. Let $b$ be a component of $\gamma$, let $M$ be the result of attaching a 2-handle to $\boundary M$ along $b$ and let $\beta$ be the cocore of the 2-handle. Let $\Delta$ be the intersection number between $a$ and $b$. Then $Q$ is a parameterizing surface for $(M,\gamma-b,\beta)$. If $a$ is disjoint from all components of $\gamma - b$, then $I(Q) = -2\chi(Q) + |\boundary Q|\Delta$, whether $Q$ is thought of as a parameterizing surface in $(N,\gamma,\nil)$ or in $(M,\gamma-b,\beta)$. If $M'$ is the result of attaching a 2-handle to $\boundary N$ along $a$ and if $\ob{Q}$ is the result of capping off $\boundary Q$ with discs, then \[I(Q) = -2\chi(\ob{Q}) + 2|\boundary Q| + |\boundary Q|\Delta.\] Consequently, if, by some chance, $I(Q) \geq 2\mu$ where $\mu$ is the number of intersections between $\boundary Q$ and $b$, then we obtain an inequality similar to Lackenby's \eqref{Lackenby Ineq}.
\end{example}

\subsection{Gabai Discs}
The final definition we need is not, strictly speaking, in the domain of sutured manifold theory, but it is convenient to place it here.

\begin{definition} (cf. \cite[Section 3]{S2})
Suppose that $(M,\gamma,\beta)$ is a sutured manifold with $\beta$ an oriented 1--manifold. Let $Q$ be a parameterizing surface in $(M,\gamma,\beta)$ traversing each edge or loop of $\beta$ at least $\mu$ times. Suppose that $D \subset M$ is an embedded oriented disc transverse to $\beta$. $D$ is a  \defn{Gabai disc} for $Q$ in $M$ if 
\begin{itemize}
\item $|\beta \cap D| = q > 0$ and all points of intersection have the same sign.
\item $|Q \cap \boundary D| < \mu.$
\end{itemize}
\end{definition}

As we explain in Section \ref{Gabai Discs}, when $\beta$ is a knot or arc then the presence of a Gabai disc produces a so-called ``Scharlemann cycle''. The Scharlemann cycle can be used to simplify the parameterizing surface. The simplification procedure gives rise to the notion of ``rationally cobordant'' mentioned earlier.

\section{Sutured Manifold Hierarchies}

\subsection{Fundamentals}
The fundamental tool for studying sutured manifolds is the sutured manifold hierachy, which is essentially a technical adaptation to the category of sutured manifolds of the usual sort of hierachy in 3-manifold theory. In brief, given a sutured manifold $(M,\gamma,\beta)$ and a parameterizing surface $Q \subset M$, we cut $M$ along certain types of oriented surfaces $S$ in such a way that the manifold inherits a natural sutured manifold structure $(W,\gamma',\beta')$ and the parameterizing surface $Q$ gives rise to a parameterizing surface $Q' \subset W$. See Figure \ref{Fig:  SuturedDecomp} for a schematic depiction. There are several (slightly different) notions of suture manifold hierarchy, the version we use in this paper is that found in \cite{S1} as modified in \cite{S2}. The 3-manifold $W = M - \inter{\eta}(S)$. The sutures $\gamma'$ are essentially the double curve sum of the sutures $\gamma$ with the oriented curves $\boundary S$. The 1--complex $\beta' = \beta \cap M'$ and the parameterizing surface is, roughly speaking, $Q \cap W$, although certain types of boundary compressions and isotopies may have to be performed on $Q$ prior to the decomposition to guarantee that $Q'$ remains a parameterizing surface. 

\begin{figure}[tbh]
\centering
\includegraphics[scale=0.4]{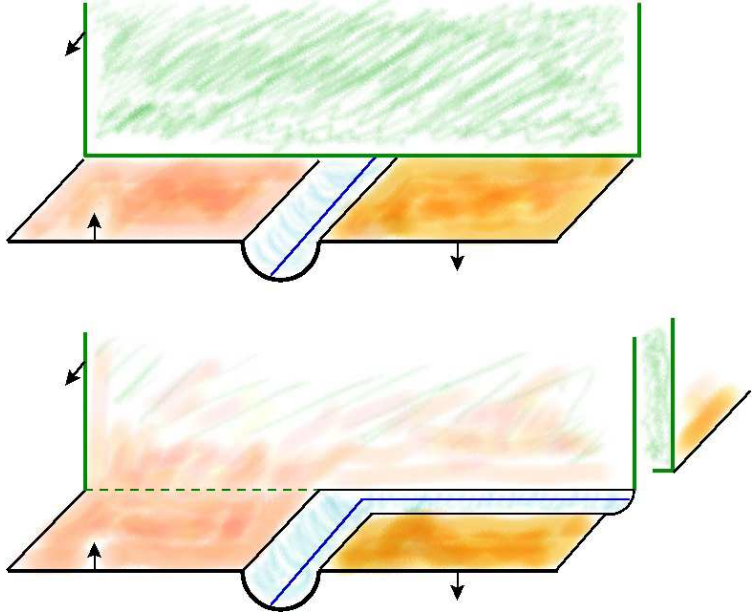}
\caption{The top figure shows the boundary of a sutured manifold (with orientations on $R_\pm(\gamma)$ indicated. The green surface is a decomposing surface. To decompose along the green surface, cut open the manifold along the surface and take the double curve sum of the sutures with the boundary of the decomposing surface. The bottom figure shows the decomposed sutured manifold.}
 \label{Fig:  SuturedDecomp}
\end{figure}

We write the decomposition as $(M,\gamma,\beta) \stackrel{S}{\to} (W,\gamma',\beta')$. The decomposition is \defn{$\beta$-taut} if $(M,\gamma,\beta)$ and $(W,\gamma',\beta')$ are $\beta$-taut and $\beta'$-taut, respectively. The decomposition \defn{respects} the parameterizing surface $Q$ if $Q'$ is a parameterizing surface. A sequence of sutured manifold decompositions is a \defn{hierarchy} of $(M,\gamma,\beta)$ if it concludes with a sutured manifold $(M_n,\gamma_n,\beta_n)$ such that $H_2(M_n, \boundary M_n) = 0$.  For more information on the type of surfaces along which we decompose our sutured manifold, see \cite[Definition 2.1]{S2}. We call any of the surfaces appearing in that definition an \defn{allowable decomposing} surface. 

One property possessed by allowable decomposing surfaces is that if $e$ is an oriented edge or circle of $\beta$, then if both the allowable decomposing surface and $R(\gamma)$ intersect $e$, they do so with the same sign. Let $(M_i, \gamma_i, \beta_i)$ be the sutured manifolds in a $\beta$-taut sutured manifold hierarchy using allowable decomposing surfaces and give each edge and loop of $\beta_i$ in $(M_i,\gamma_i,\beta_i)$ an orientation so that each edge of $\beta_i$ with both endpoints in $R(\gamma_i)$ is oriented from $R_-(\gamma_i)$ to $R_+(\gamma_i)$. Then in each of the sutured manifolds $(M_i,\gamma_i,\beta_i)$  the orientation induced on each edge $e$ of $\beta_i$  either always coincides the original orientation of corresponding edge or loop in $\beta$ or always opposes it.

The next three results are fundamental to passing information up and down sutured manifold hierarchies.

\begin{theorem}[{\cite[Theorem 2.5]{S2}}]\label{Theorem Down}
Every $\beta$-taut sutured manifold $(M,\gamma,\beta)$ admits a $\beta$-taut sutured manifold hierarchy respecting a given parameterizing surface using allowable decomposing surfaces.
\end{theorem}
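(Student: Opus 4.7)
The plan is to build the hierarchy inductively: at each stage, produce a single $\beta$-taut decomposition along an allowable surface that respects $Q$ and strictly reduces a well-founded complexity on sutured manifolds. Termination then yields the full hierarchy ending with $H_2(M_n,\boundary M_n)=0$.

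For the one-step reduction, assume $H_2(M_i,\boundary M_i)\neq 0$ (otherwise we are at the terminal stage). Choose a non-trivial class $\sigma$ and realize it by a $\beta$-taut surface $S_0$; existence follows because the $\beta$-norm is a seminorm whose minimum on any rational class is attained. Next I would \emph{condition} $S_0$ in the sense of \cite{S2}: isotope $\boundary S_0$ so that, on each component of $\boundary M$, its arcs and circles are ``groomed'' relative to $\gamma$ (each arc runs between distinct components of $\gamma$; no circle is trivially parallel to a suture; and all intersections with $R(\gamma)$ respect the orientation constraint on $\beta$). Standard exchange arguments for compressing and boundary-compressing disks preserve $\beta$-tautness throughout, yielding an allowable decomposing surface $S$.

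Second, I would arrange the decomposition to respect $Q$. Put $S$ and $Q$ in general position and eliminate inessential intersection circles and outermost arcs by innermost-disk/outermost-arc arguments, using that $S$ is $\beta$-taut in $M-\beta$ and that $Q$ satisfies (P1)-(P3). For the remaining essential intersections, perform carefully chosen boundary compressions on $Q$ along disks in $S$, insisting that each boundary-compressing disk meet $\gamma$ in at most two points and be disjoint from $\boundary\eta(\beta)$. Such compressions do not increase the index $I(Q)$, preserve (P1) and (P2), and turn any component of $Q'=Q\cap (M-\inter{\eta}(S))$ that violates (P3) into one that can be discarded. The outcome is a parameterizing surface $Q'$ for the decomposed sutured manifold $(W,\gamma',\beta')$ with $I(Q')\leq I(Q)$.

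Finally, for termination, equip sutured manifolds with Gabai's lexicographic complexity $c(M,\gamma,\beta)$, whose lead entry is the $\beta$-norm of $R(\gamma)$, with lower-order entries recording the count of sutures and of torus components. A direct calculation shows that decomposition along a conditioned $\beta$-taut allowable surface strictly decreases $c$; since $c$ takes values in a well-ordered set, the process must halt. The main obstacle in this plan is the second step: one must maintain the parameterizing-surface condition through each decomposition while keeping $I(Q)$ under control. The delicate point is that a boundary compression introduced to restore (P3) may, a priori, create arcs of $\boundary Q$ that cross $\gamma$ badly, inflating $|\boundary Q\cap\gamma|$; Scharlemann's careful bookkeeping in \cite[Theorem 2.5]{S2} is precisely what trades such potentially bad compressions for index-non-increasing ones, reconciling $\beta$-tautness, allowability, and compatibility with $Q$ simultaneously.
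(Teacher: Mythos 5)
The paper does not prove this theorem; it is quoted verbatim from \cite[Theorem~2.5]{S2}, so there is no internal proof to compare against. Your outline does track the broad shape of Scharlemann's actual argument (which modifies the hierarchy construction of \cite[Theorem~4.19]{S1}), but the two steps you describe as routine are precisely where the mathematical content resides, and a couple of your intermediate claims are not quite right as stated.

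First, conditioning $S_0$ into an allowable decomposing surface is not merely an isotopy of $\boundary S_0$: in general one must modify the surface itself, typically by taking double-curve sums with parallel copies of components of $R_\pm(\gamma)$, to kill oppositely-oriented boundary arcs and to arrange that $\boundary S$ on $T(\gamma)$ is a family of coherently oriented parallel circles. An isotopy alone cannot change the algebraic boundary pattern. Second, and more seriously, your termination argument asserts that decomposition along ``a conditioned $\beta$-taut allowable surface strictly decreases $c$.'' That is false for an arbitrary such surface. The heart of the hierarchy theorem is showing that, whenever $H_2(M_i,\boundary M_i)\neq 0$, one can \emph{choose} an allowable decomposing surface so that the complexity drops; in Gabai's and Scharlemann's constructions this sometimes forces one to decompose along product disks or product annuli, which represent no nontrivial class and do not reduce the $\beta$-norm of $R(\gamma)$ at all, but do decrease a lower-order term of the lexicographic complexity. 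Merely selecting a norm-minimizing representative of a nonzero class and conditioning it does not guarantee strict decrease. Third, the statement that the $\boundary$-compressions used to keep $Q$ a parameterizing surface ``do not increase the index $I(Q)$'' is the content of \cite[Lemmas~7.5--7.6]{S1}; it is a genuine computation, not a routine observation, and it is the reason the eventual inequality $I(Q)\geq I(Q_n)$ can be propagated down the hierarchy. You correctly flag the $Q$-bookkeeping as the delicate point, but the termination step deserves equal suspicion: as written your proof would terminate for the wrong reason, since the claimed complexity decrease is not automatic.
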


\begin{theorem}[{\cite[Corollary 2.7]{S2}} and {\cite[Corollary 3.3]{S1}}]\label{Theorem Up}
Suppose that $\mc{H}$ is a $\beta$-taut sutured manifold hierarchy of $(M,\gamma,\beta)$ using allowable decomposing surfaces and that no component of $M$ is a solid torus disjoint from $\gamma$. Suppose that $\mc{H}$ terminates in $(M_n, \gamma_n, \beta_n)$. If $(M_n,\gamma_n,\beta_n)$ is $\nil$-taut, then every decomposition in the hierarchy is $\nil$-taut. In particular, $(M,\gamma,\nil)$ is $\nil$-taut. Furthermore, every decomposing surface is $\nil$-taut.
\end{theorem}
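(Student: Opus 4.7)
The plan is to proceed by reverse induction on the index of the hierarchy. The base case, that $(M_n, \gamma_n, \nil)$ is $\nil$-taut, is exactly the hypothesis of the theorem. For the inductive step, assume $(M_{i+1}, \gamma_{i+1}, \nil)$ is $\nil$-taut, and consider the decomposition $(M_i, \gamma_i, \beta_i) \stackrel{S_i}{\to} (M_{i+1}, \gamma_{i+1}, \beta_{i+1})$ along the allowable surface $S_i$. We must establish three things: that $S_i$ is $\nil$-taut in $M_i$, that $(M_i, \gamma_i, \nil)$ is $\nil$-taut, and that the decomposition itself is $\nil$-taut.

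The basic tool is the evident inequality $x_\nil(T) \leq x_\beta(T)$ for any surface $T$ transverse to $\beta$, which already implies that a $\beta$-taut surface dominates the $\nil$-norm of its relative homology class. The crux of the argument, which I expect to be the principal obstacle, is promoting $\beta$-tautness of $S_i$ to $\nil$-tautness. I would argue by contradiction: suppose there existed a surface $S'_i$ in the class $[S_i, \boundary S_i] \in H_2(M_i, \boundary M_i)$ with strictly smaller Thurston norm (or a compressing disc for $S_i - \beta_i$ in $M_i - \beta_i$). Perturb $S'_i$ so that its intersections with $\beta_i$ all have the same sign, yielding a new allowable decomposing surface, and decompose along $S'_i$ in place of $S_i$. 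The resulting sutured manifold $(M'_{i+1}, \gamma'_{i+1}, \nil)$ would have $R_\pm(\gamma'_{i+1})$ of strictly smaller $\nil$-norm than $R_\pm(\gamma_{i+1})$, since the latter arises as a double-curve sum involving $S_i$, contradicting the $\nil$-tautness of $(M_{i+1}, \gamma_{i+1}, \nil)$. The hypothesis that no component of $M$ is a solid torus disjoint from $\gamma$ is essential here to rule out degenerate decompositions along meridian discs that would artifactually collapse $\nil$-norms while preserving $\beta$-norms.

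Once $S_i$ is known to be $\nil$-taut, I would deduce $\nil$-tautness of $(M_i, \gamma_i, \nil)$ in the same spirit: any essential sphere, compressing disc, or norm-reducing surface in $M_i$ (away from $\beta$) would, after intersecting with $S_i$ in general position, produce a corresponding defect for $R_\pm(\gamma_{i+1})$ or $T(\gamma_{i+1})$ or in the irreducibility of $M_{i+1}$, contradicting the inductive hypothesis. The $\nil$-tautness of the boundary surfaces $R_\pm(\gamma_i)$ and $T(\gamma_i)$ transfers similarly, using that these surfaces contribute pieces to $R_\pm(\gamma_{i+1}) \cup T(\gamma_{i+1})$ under the double-curve sum with $\boundary S_i$. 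Once both endpoints of the decomposition and the decomposing surface are $\nil$-taut, the decomposition is $\nil$-taut by definition. Iterating from $i = n-1$ down to $i = 0$ then yields that every decomposition and every decomposing surface in $\mc{H}$ is $\nil$-taut, and in particular $(M, \gamma, \nil)$ is $\nil$-taut.
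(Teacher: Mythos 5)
The paper gives no proof of this theorem; it is cited verbatim from Scharlemann (\cite[Corollary 2.7]{S2} and \cite[Corollary 3.3]{S1}), so your reconstruction should be measured against those original arguments. Your outer framework is in the right spirit: reverse induction down the hierarchy, a contradiction argument comparing norms of $R_\pm$ under a hypothetical ``better'' decomposing surface, and the correct intuition about the solid torus hypothesis (decomposing a solid torus with $\gamma=\nil$ along a meridian disc produces a taut ball with one suture, even though the solid torus itself is not $\nil$-taut, so tautness would otherwise fail to propagate backwards).

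However, the crux of your argument has real gaps. First, the step ``perturb $S'_i$ so that its intersections with $\beta_i$ all have the same sign'' is not available: the signs of transverse intersection with an oriented $1$-manifold cannot be changed by a perturbation (the algebraic intersection number is homological), and cancelling opposite-sign pairs requires tubing, which changes the Euler characteristic and hence the very norm you are trying to control. Second, the contradiction you seek by comparing $R_\pm(\gamma'_{i+1})$ with $R_\pm(\gamma_{i+1})$ is incomplete as stated, since those surfaces live in potentially different decomposed manifolds $M'_{i+1}$ and $M_{i+1}$; merely being ``smaller'' is not, by itself, a contradiction. The engine of Scharlemann's actual proof is Gabai's norm-additivity for oriented double-curve sums, combined with the inequality $x_\nil \leq x_\beta$ and the fact that $\beta$-tautness of the decomposition already pins down $x_\beta$ of $R_\pm(\gamma_{i+1})$; the contradiction is extracted by bookkeeping these quantities within a fixed homology class, not by directly comparing two different decompositions. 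Third, the claim that $\nil$-tautness of $(M_i,\gamma_i,\nil)$ follows by ``intersecting a hypothetical essential sphere or norm-reducing surface with $S_i$ in general position'' is hand-waving and not how the argument goes; again it runs through the norm relations, and the incompressibility part of tautness (as opposed to norm-minimality) requires a separate argument that your sketch folds in without justification. In short, the overall strategy is recognizable, but the technical heart --- the double-curve-sum norm identity and the care needed to make the competitor surface a legitimate decomposing surface --- is missing.
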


Finally,

\begin{theorem}[{\cite[Lemmas 7.5 and 7.6]{S1}}]\label{Index Decr.}
Suppose that $(M,\gamma,\beta) \stackrel{S}{\to} (W,\gamma',\beta')$ is a sutured manifold decomposition respecting a parameterizing surface $Q$. If $Q'$ is the resulting parameterizing surface in $M'$ then $I(Q) \geq I(Q')$.
\end{theorem}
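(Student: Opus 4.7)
The plan is to track how each of the three summands in $I(Q) = -2\chi(Q) + |\boundary Q \cap \gamma| + \mu(Q)$ changes as one passes from $Q$ to $Q'$. Recall that $Q'$ is produced from $Q$ in two stages: first, after a preliminary isotopy putting $Q$ transverse to $S$ (together with any boundary compressions needed to ensure that $Q \cap S$ is suitable), cut $Q$ along the arcs and circles of $Q \cap S$ to obtain a surface $Q^* \subset W$; second, perform the boundary compressions of $Q^*$ across $\boundary W$ and the deletions of any resulting sphere or disc components that are required so that $Q'$ satisfies (P1)-(P3).

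First I would analyze the pure cutting step $Q \to Q^*$. Cutting along properly embedded $1$-submanifolds preserves Euler characteristic, so $-2\chi(Q^*) = -2\chi(Q)$. The meridian count satisfies $\mu(Q^*) \leq \mu(Q)$: if an edge $e$ of $\beta$ meets $S$ in $k$ points, then $e$ is split into several edges of $\beta'$, and the spanning arcs of $\boundary Q$ in the meridional annulus $A(e)$ are distributed, without gain, among the meridional annuli of the pieces. For the suture term, one uses the fact that $\gamma'$ is obtained from $\gamma$ and $\boundary S$ by double-curve sum along $\boundary M$ (cut open along $\boundary S$); a local model near each point of $\gamma \cap \boundary S$, using the orientation compatibility between $S$ and $R_\pm(\gamma)$ built into the definition of an allowable decomposing surface, shows that the new boundary arcs of $Q^*$ introduced by the cut contribute to $|\boundary Q^* \cap \gamma'|$ at most what the corresponding arcs of $\boundary Q$ contributed to $|\boundary Q \cap \gamma|$. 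Summing gives $|\boundary Q^* \cap \gamma'| \leq |\boundary Q \cap \gamma|$, hence $I(Q^*) \leq I(Q)$.

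Next I would show that each modification in the second stage only decreases $I$. Discarding an inessential sphere or disc component (necessarily disjoint from $\gamma' \cup \boundary \eta(\beta')$, by (P3)) strictly decreases $-2\chi$ while not affecting the other two terms. A boundary compression of $Q^*$ across a disc $D$ with $\boundary D = a \cup b$, where $a \subset Q^*$ and $b \subset \boundary W$, increases $\chi$ by $1$, so $-2\chi$ drops by $2$; depending on where $b$ lies, either $b$ crosses $\gamma'$ at its endpoints so $|\boundary Q^* \cap \gamma'|$ drops by at most $2$, or $b$ runs across a meridional annulus $A(e')$ so that a spanning arc is absorbed and $\mu$ drops by exactly $2$. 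In every case permitted during the modification, the net change to $I$ is non-positive; the detailed case analysis is the content of \cite[Lemmas 7.5 and 7.6]{S1}.

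The main obstacle is the precise local bookkeeping at each crossing of $\boundary S$ with $\gamma$, and at each boundary-compression disc, to confirm that the orientation conditions force the bounds claimed above rather than their reverse. Once that local claim is verified, chaining $I(Q) \geq I(Q^*) \geq I(Q')$ yields the theorem.
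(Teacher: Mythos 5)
The paper does not actually prove this statement --- it is quoted wholesale from Scharlemann \cite[Lemmas~7.5 and 7.6]{S1} with no in-paper argument --- so there is no internal proof to compare against. Evaluated on its own merits, your sketch has the right coarse shape (pass the index through the cutting step, then through the cleanup step), but two of your three per-summand claims in the first stage are wrong, and the incorrectness matters because the true argument is a trade-off between the summands rather than a separate monotonicity for each one.

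Cutting $Q$ along $Q\cap S$ does not preserve Euler characteristic. Each arc component of $Q\cap S$ along which one cuts raises $\chi$ by $1$ (removing an open rectangular neighborhood of a properly embedded arc from a surface adds $1$ to $\chi$), so $-2\chi(Q^{*})$ drops by $2$ per arc. This drop is precisely the budget against which the other two summands are permitted to grow. The claim $\mu(Q^{*})\leq\mu(Q)$ also fails in the case of central interest to this paper: when $\beta$ is a knot, $\mu(Q)=0$ since a loop has no meridional annuli $A(e)$ at all, but as soon as the decomposing surface crosses $\beta$ the loop is severed into edges $e'$ with genuine annuli $A(e')$, and the circle components of $\boundary Q\cap\boundary\eta(\beta)$ --- which have slope $a$ and therefore cross each new meridian $\Delta\geq 1$ times --- are cut into spanning arcs, so $\mu(Q')$ becomes strictly positive. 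Your discussion of $\mu$ only treats edges of $\beta$ that already exist, not loops being converted into edges, which is exactly the mechanism by which $\mu$ can jump. Likewise, the new boundary arcs of $Q^{*}$ running parallel to $\boundary S$ can meet the double-curve-summed sutures $\gamma'$ at places that contributed nothing to $|\boundary Q\cap\gamma|$, so that term can also go up. What Scharlemann actually shows is that, under the orientation constraints on an allowable decomposing surface, the loss in $-2\chi$ dominates the aggregate gain in $|\boundary Q'\cap\gamma'|+\mu(Q')$, and that the second-stage boundary compressions and discards of trivial components can only decrease the index further. Asserting, as your sketch does, that all three summands decrease separately would make the lemma nearly trivial, and it is not what happens.
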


This last result motivates the study of lower bounds for index at the end of the hierarchy. We take up that task in the next subsection.

\subsection{Combinatorics at the end}

Throughout this section, suppose that $(M,\gamma,\beta)$ is a $\beta$-taut sutured manifold with $\beta$ the union of arcs and $\boundary M \neq \nil$. Assume that $M$ is connected and that $H_2(M,\boundary M)  = 0$. This last assumption implies that $\boundary M$ is the union of spheres.

The next theorem is a reworking of the combinatorics from \cite[Section 9]{S1} which are in turn based on combinatorics found in \cite{G3}.

\begin{theorem}\label{Combinatorics}
Orient each edge of $\beta$ from $R_-(\gamma)$ to $R_+(\gamma)$. Suppose that $(M,\gamma,\nil)$ is not $\nil$-taut and that $Q \subset (M,\gamma,\beta)$ is a parameterizing surface traversing each edge of $\beta$ at least $\mu \geq 1$ times. Then one of the following occurs:
\begin{enumerate}
\item There is a Gabai disc for $Q$ in $(M,\gamma,\beta)$.
\item $I(Q) > 2\mu$
\item $(M,\gamma)$ satisfies all of the following:
\begin{enumerate}
\item $|\gamma| = 1$
\item $M$ is a rational homology ball
\item $H_1(M)$ has torsion
\item $M$ has Heegaard genus at most 3
\item If $M$ does not have a lens space summand, then $M$ has Heegaard genus at most 2.
\end{enumerate}
\end{enumerate}
\end{theorem}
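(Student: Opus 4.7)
The strategy is to locate the failure of $\nil$-tautness, convert it into an embedded disc transverse to $\beta$, and then run Scharlemann's end-of-hierarchy combinatorial analysis (essentially \cite[\S9]{S1} as refined in \cite[\S3]{S2}) on its intersection with the parameterizing surface $Q$. Since $\boundary M$ is a disjoint union of spheres and $(M,\gamma,\nil)$ fails to be $\nil$-taut while $(M,\gamma,\beta)$ is $\beta$-taut, the obstruction to $\nil$-tautness must be detected by some $\beta$-intersecting object: either an essential sphere in $M$, a compressible component of $R(\gamma)$, or a sphere component of $\boundary M$ that bounds a ball outside $\boundary M$ in $M$ but whose corresponding ball meets $\beta$. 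In every case, the standard innermost-disc surgeries in $M \setminus \beta$ (which is irreducible) allow me to replace the offending surface by an embedded disc $D \subset M$ transverse to $\beta$ with $|\beta \cap D| > 0$.

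The first reduction is a sign analysis for $\beta \cap D$. If all points of $D \cap \beta$ have the same sign, then $D$ is a Gabai disc provided $|Q \cap \boundary D| < \mu$, giving conclusion (1). If instead $|Q \cap \boundary D| \geq \mu$, I would count the contribution of $Q \cap D$ to the index: each arc or simple closed curve of $Q \cap D$ interacts with $\chi(Q)$, $|\boundary Q \cap \gamma|$, and $\mu(Q)$ in controlled ways, and the inequality $|\boundary D \cap Q| \geq \mu$ combined with $|\beta \cap D| \geq 1$ forces $I(Q) > 2\mu$, giving conclusion (2). If some intersections have opposite signs, I run the usual Scharlemann-cycle combinatorics on the graph $\Lambda \subset D$ obtained by taking $Q \cap D$ (together with the marked points $\beta \cap D$): an innermost bigon or the smallest Scharlemann cycle either yields a subdisc whose $\beta$-intersections all share a sign (reducing to the previous case), or it forces enough boundary arcs on $\boundary Q$ to push $I(Q)$ above $2\mu$.

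The residual case is the one in which neither a Gabai disc nor the index inequality can be obtained. Here I would argue that the disc $D$ is extremely constrained: every face of $\Lambda$ is a Scharlemann cycle of minimal length, every cycle is surrounded by parallel boundary arcs on the same suture, and $R_+(\gamma)$ and $R_-(\gamma)$ must be glued along $\gamma$ in a very restrictive way. This is where (3) comes out. The boundary structure forces $|\gamma|=1$; the rational homology computations (using that $H_2(M,\boundary M)=0$, that $\boundary M$ is spheres, and that $\beta$ forces torsion in $H_1$ via the Scharlemann-cycle cabling) give items (3b)-(3c); and $M$ is swept out by the two handlebodies $R_-(\gamma) \cup (\text{bands around the cycles})$ and $R_+(\gamma) \cup \cdots$, producing a Heegaard splitting of genus at most $3$, which drops to $2$ unless a length-two Scharlemann cycle creates a lens-space summand in the standard way (the extra handle carrying a $(p,q)$-curve).

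The hard part is the last paragraph: extracting the Heegaard genus estimates and the lens-space dichotomy from the minimal-complexity combinatorics. This requires carefully bounding the number and type of faces of $\Lambda$ and identifying the resulting decomposition of $M$ with a genuine Heegaard structure; the delicate bookkeeping of sutures near Scharlemann cycles (and ensuring that $|\gamma|=1$) is where all of the combinatorial subtlety concentrates. I would model this part closely on Scharlemann's argument in \cite[\S9]{S1} and the refinements in \cite[\S3]{S2}, and then use Proposition~\ref{Scharl-sutures} together with standard Heegaard-splitting bookkeeping to extract the final Heegaard genus bound.
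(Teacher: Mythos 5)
Your approach departs from the paper's in a structurally important way, and several of your implications do not hold as stated. The paper's combinatorics takes place on a graph $\Gamma'$ drawn on the sphere boundary $\boundary M$: its vertices are the points $\beta' \cap \boundary M$ and its edges are the arcs $\boundary Q' \cap \boundary M$, constructed after a preprocessing step that removes cancelling and non-self-amalgamating discs from $Q$ to obtain $Q'$ and $\beta'$. The paper does not convert the failure of $\nil$-tautness into a disc $D$ and study $Q \cap D$; rather, it assumes there is no Gabai disc and uses that assumption (Lemma \ref{Apply Gabai Discs}) to show that certain vertices of $\Gamma'$ and certain components of $\boundary R(\gamma)$ must be \emph{full} (meet at least $\mu$ non-loop edges), which forces either $I(Q) \geq 2\mu$ or an extremely rigid planar graph $\Gamma'$. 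Your graph $\Lambda$ lives inside an auxiliary disc $D$ and records $\beta \cap D$ and $Q \cap D$; this is the CGLS-style intersection graph that the paper uses in Section~\ref{Gabai Discs} to eliminate a Gabai disc once one exists, not the engine behind Theorem~\ref{Combinatorics}.

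The concrete gaps are: (i) your claim that $|Q \cap \boundary D| \geq \mu$ forces $I(Q) > 2\mu$ is unsupported --- the number of intersections of $Q$ with a fixed disc boundary does not directly bound $I(Q)$; the paper's inequality (Lemma~\ref{Connectivity}) instead comes from summing index contributions over components of $Q'$ that traverse the arcs at two non-adjacent full vertices $v, w$, giving $I(Q') \geq \rho(v) + \rho(w) \geq 2\mu$; (ii) you assert $|\gamma| = 1$ ``from the boundary structure'' with no derivation, whereas the paper obtains it (Lemma~\ref{Complete Graph}) by showing adjacency of full vertices forces a unique disc component of each of $R_\pm$ and hence a single sphere boundary component with one suture; (iii) the Heegaard genus bound comes in the paper from the easy half of Kuratowski's theorem applied to the complete planar graph $\Gamma' \cap R_\pm$ (forcing $|\beta'| \leq 3$), together with the compressionbody $V$ built from a collar of $\boundary M$ and $\eta(\beta')$ whose exterior is shown to be a handlebody --- your proposal supplies no bound on the number of arcs of $\beta'$ and no candidate Heegaard surface. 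It is also not clear that the failure of $\nil$-tautness yields a disc rather than, say, a non-ball-bounding sphere or a norm-decreasing surface of higher genus meeting $\beta$, so even the starting object of your argument is not secured.
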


\begin{proof}[Proof of Theorem \ref{Combinatorics}]
Assume that there is no Gabai disc for $Q$ in $(M,\gamma,\beta)$. 

\begin{lemma}\label{Elementary}
No component of $\boundary M$ is disjoint from $\beta$ and each disc component of $R(\gamma)$ contains an endpoint of $\beta$. Also, no component of $R(\gamma)$ is a sphere.
\end{lemma}
Since $(M,\gamma)$ is $\beta$-taut, but not $\nil$-taut, $\beta \neq \nil$. Since $M-\beta$ is irreducible, no component of $\boundary M$ is a sphere disjoint from $\beta$. If a component of $R(\gamma)$ were a sphere, then the complement of a regular neighborhood of a point in that component would be a Gabai disc for $Q$. Thus, no component of $R(\gamma)$ is a sphere.

If some component $R$ of $R_\pm$ were a disc disjoint from $\beta$, then the component $R'$ of $R_\mp$ adjacent to $R$ would either be $\beta$-compressible, or would be a disc disjoint from $\beta$. The former case contradicts the assumption that $(M,\gamma,\beta)$ is $\beta$-taut and the latter implies that a component of $\boundary M$ is a sphere disjoint from $\beta$, a contradiction.
\end{proof}

Certain disc components of $Q$ may be special from the point of view of sutured manifold theory. A disc component $q$ whose boundary traverses exactly one edge of $\beta$ and crosses exactly one suture is called a \defn{cancelling disc}. A disc component $q$ whose boundary traverses exactly two (distinct) edges of $\beta$ and no sutures is called a \defn{non-self amalgamating disc}. See Figure \ref{Fig:  cancelling} for a depiction. A disc whose boundary crosses $\gamma$ exactly twice and is disjoint from $\beta$ is a \defn{product disc}.  A disc whose boundary traverses a single arc exactly twice and is disjoint from $\gamma$ and all other arcs is a \defn{self amalgamating disc}. Cancelling discs, non-self amalgamating discs, product discs, and self-amalgamating discs are exactly the parameterizing surfaces of index 0. By \cite[Lemmas 4.3 and 4.4]{S1}, if $e$ is an edge of $\beta$ contained in the boundary of a cancelling or non-self amalgamating disc, the sutured manifold $(M,\gamma,\beta - e)$ is $(\beta-e)$-taut. We may view the removal of $e$ as an isotopy of $e$ across $q$ so that it either ends up in $\boundary M$ (in the case of a cancelling disc) or is merged into another edge of $\beta$ (in the case of a non-self amalgamating disc.) Any other component of $Q$ whose boundary runs along $e$ can be pushed along $q$ so that it either crosses a suture instead of traversing $e$ or traverses an arc of $\beta - e$. Sometimes there are methods for removing product discs and self-amalgamating discs, but we will not need them in this paper.

\begin{figure}[tbh]
\centering
\includegraphics[scale=0.4]{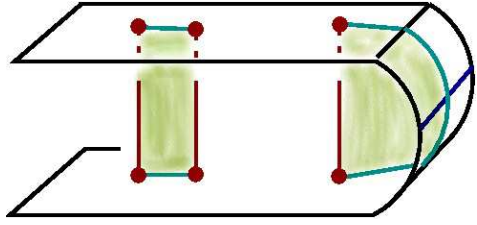}
\caption{A non-self amalgamating disc is on the left and a cancelling disc is on the right}
 \label{Fig:  cancelling}
\end{figure}

We remove components of $\beta$ using cancelling discs and non-self amalgamating discs, one after the other to obtain $\beta'$ and a parameterizing surface $Q'$ in $(M,\gamma,\beta')$. We note that $(M,\gamma,\beta')$ is $\beta'$-taut and we do this as many times as necessary to guarantee that no component of $Q'$ is a cancelling disc or non-self amalgamating disc. Note that $I(Q) = I(Q')$.

We consider the points $\beta \cap \boundary M$ as vertices of a graph $\Gamma$ and components $\boundary Q \cap \boundary M$ as its edges. By hypothesis, the valence of each vertex of $\Gamma$ is at least $\mu$. Similarly, we consider the points $\beta' \cap \boundary M$ as vertices of a graph $\Gamma'$ having edges $\boundary Q' \cap \boundary M$.  For a vertex $v \in \Gamma'$, let $\rho(v)$ denote the number of edges of $\Gamma'$ having exactly one endpoint at $v$ (so no loops) and say that $v$ is \defn{full} if $\rho(v) \geq \mu$. Similarly, for a component $\delta$ of $\boundary R_\pm(\gamma)$, let $\rho(\delta)$ denote the number of edges of $\Gamma' \cap R_\pm(\gamma)$ having exactly one endpoint at $\delta$. Say that $\delta$ is \defn{full} if $\rho(\delta) \geq \mu$. 

A \defn{loop} of $\Gamma'$ is an edge of $\Gamma'$ disjoint from $\gamma$ with both endpoints at the same vertex. A loop is \defn{inessential} if it bounds a disc in $\boundary M$ with interior disjoint from vertices of $\Gamma'$. Since no component of $R(\gamma)$ is a disc disjoint from the vertices of $\Gamma$, or for that matter, the vertices of $\Gamma'$ (Lemma \ref{Elementary}), the disc bounded by an inessential loop is disjoint from $\gamma$.

The next lemma uses the absence of Gabai discs to guarantee that certain vertices and components of $\boundary R(\gamma)$ are full.

\begin{lemma}\label{Apply Gabai Discs}
The following are full:
\begin{enumerate}
\item vertices $v \in \Gamma'$ at which no essential loop is based.
\item vertices $v \in \Gamma'$ at which a loop $\alpha$ is based such that $\alpha$ bounds a disc $D$ in $\boundary M$ having the property that all vertices of $\Gamma'$ interior to $D$  lie in either $R_-$ or in $R_+$.
\item Components $\delta$ of $\boundary R_\pm$ such that the disc $D$ in $\boundary M$ bounded by $\delta$ and containing the component of $R_\pm$ to which $\delta$ belongs has the property that all vertices of $\Gamma'$ interior to it lie in either $R_-$ or $R_+$.
\item Components $\delta$ of $\boundary R_\pm$ bounding disc components of $R_\pm$.
\end{enumerate}
\end{lemma}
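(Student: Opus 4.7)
The strategy for all four parts is uniform: argue by contradiction. Assume the asserted vertex $v$ (in cases (1), (2)) or boundary component $\delta$ (in cases (3), (4)) is not full, so $\rho < \mu$. We construct a Gabai disc for $Q'$ in $(M,\gamma,\beta')$, contradicting the standing hypothesis that none exists.

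In each case the Gabai disc is obtained by pushing a suitable disc $D_0 \subset \boundary M$ slightly into $M$. Two things must be verified: first, that the push-off meets $\beta'$ at least once, with all intersections of the same sign; second, that $|\boundary D_0 \cap \boundary Q'| < \mu$. For the sign condition, recall that each edge of $\beta$ is oriented from $R_-(\gamma)$ to $R_+(\gamma)$, so the endpoints of $\beta'$ contained in $D_0$ contribute intersections of the same sign precisely when they all lie in the same component of $R(\gamma)$. This is what the hypotheses of (2), (3), (4) guarantee; in (1) only the single vertex $v$ is enclosed, so the sign condition is automatic and $q = 1$. For the count condition, the arcs of $\boundary Q'$ crossing $\boundary D_0$ are precisely the edges of $\Gamma'$ with exactly one endpoint inside $D_0$, and with the right choice of $D_0$ this count equals $\rho(v)$ or $\rho(\delta)$ respectively.

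The construction of $D_0$ varies across the four cases. In (1), we begin with a small neighborhood of $v$ and absorb each loop at $v$ together with the disc it bounds in $\boundary M$; since every such loop is inessential by hypothesis, the absorbed disc is disjoint from $\gamma$ and from the other vertices of $\Gamma'$ (as noted in the text following Lemma \ref{Elementary}). In (2), we start with the disc bounded by $\alpha$, absorb any inessential loops at $v$ lying outside it, and then enlarge slightly to include $v$ in the interior, taking care that this enlargement respects the $R_\pm$ condition. In (3), $D_0$ is the given disc whose interior vertices all lie in one $R_\pm$; no modification is needed. In (4), $D_0 = R$ is the disc component itself: $R \subset R_\pm$ forces all enclosed vertices into the same $R_\pm$, and Lemma \ref{Elementary} guarantees that $R$ contains at least one endpoint of $\beta$, so $q \geq 1$.

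The principal obstacle is the absorption step in parts (1) and (2): one must check that enlarging the disc to swallow inessential loops destroys neither the sign nor the count condition. This rests on the observation that the interiors of discs bounded by inessential loops contain no vertices of $\Gamma'$ (so no new $\beta'$ crossings are introduced) and are disjoint from $\gamma$ (so no new crossings between $\boundary D_0$ and the sutures are introduced); any components of $\boundary Q'$ that happen to lie inside these absorbed discs are closed curves, and therefore do not contribute to $|\boundary D_0 \cap \boundary Q'|$. Once this is established, the bound $\rho(v) < \mu$ or $\rho(\delta) < \mu$ delivers the Gabai disc.
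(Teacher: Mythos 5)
Your proof works entirely with $Q'$, $\beta'$, and $\Gamma'$, but the standing hypothesis set at the start of the proof of Theorem \ref{Combinatorics} is that there is no Gabai disc for the \emph{original} parameterizing surface $Q$ in $(M,\gamma,\beta)$ --- not for $Q'$ in $(M,\gamma,\beta')$. You say your disc ``contradicts the standing hypothesis that none exists,'' but no hypothesis about $Q'$ and $\beta'$ was made. A disc that meets $\beta'$ with constant sign and meets $\boundary Q'$ fewer than $\mu$ times need not meet $\beta$ with constant sign or meet $\boundary Q$ fewer than $\mu$ times: the edges of $\beta$ removed in passing to $\beta'$ (via cancelling and non-self-amalgamating discs) may pierce your disc, possibly with the wrong sign, and the pushing of $\boundary Q$ used to form $Q'$ can change the count of boundary intersections.

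The paper's proof is devoted almost entirely to bridging this gap. In case (1), when an arc of $\beta$ was merged into the arc ending at $v$, the disc $D_v$ is enlarged to contain the endpoint of the merged arc together with the arc of the amalgamating disc joining it to $v$; in particular $q$ can exceed $1$, contrary to your assertion that $q = 1$ in case (1). In cases (2) and (3), if an arc $e'$ of $\beta$ was cancelled into a suture lying inside $D_v$, the interior of $D_v$ is pushed into $M$ and slid along the cancelling disc so as to become disjoint from $e'$, since $e'$ may otherwise contribute an intersection of the wrong sign. Only after these modifications is the disc a Gabai disc for $Q$ in $(M,\gamma,\beta)$, producing the contradiction. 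Your construction of $D_0$ and the overall strategy are sound, but without the bookkeeping between $(\beta,Q,\Gamma)$ and $(\beta',Q',\Gamma')$ the argument does not contradict the hypothesis that was actually assumed.
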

\begin{proof}
Suppose, to begin, that $v$ is a vertex in $\Gamma'$ at which no essential loop is based. Let $D_v \subset \boundary M$ be a disc such that $v$ is the only vertex of $\Gamma'$ contained in $D_v$ and so that $D_v$ contains all the inessential loops based at $v$ and so that $|\boundary D_v \cap \Gamma'| = \rho(v).$   If an arc of $\beta$ is merged, during the creation of $\beta'$, with the component of $\beta$ having endpoint $v$, then we can expand $D_v$ to contain the endpoint of the amalgamated arc and the edge of the amalgamating disc joining it to $v$. Thus, $|\boundary D_v \cap \Gamma| = \rho(v)$. If $\rho(v) < \mu$, then $D_v$ would be a Gabai disc for $Q$, a contradiction. Hence, we have shown (1).

We now prove (2). Slightly inflate $D$ to a disc $D_v$ containing $\alpha$ in its interior and note that \[ |\boundary D_v \cap \Gamma| =  |\boundary D_v \cap \Gamma'| = \rho(v).\] Although, by hypothesis each point of $\Gamma' \cap D_v$ has the same sign, the same may not be true of $\Gamma \cap D_v$. Suppose, therefore, that $e'$ is an edge of $\beta$ that is cancelled into a suture contained in $D_v$. We may push the interior of $D_v$ into the interior of $M$, sliding it along the cancelling disc so that $D_v$ is disjoint from $e'$. Combining that operation with the inflation operation of the previous paragraph, we construct a Gabai disc for $Q$ unless $\rho(v) \geq \mu$.

The proof of (3) is almost identical to that of (2). If some edge $e'$ of $\beta$ is cancelled into $\delta$, we slide $D$ across the cancelling disc to make it disjoint from $e'$. Combined with the fact that every disc component of $R_\pm$ contains a point of $\beta$ and $\beta'$ and the inflation and pushing operations arising in the proofs of (1) and (2), we can construct a Gabai disc for $Q$ unless $\rho(\delta) \geq \mu$.

Conclusion (4) follows immediately from conclusion (3), for any component of $\boundary R_\pm$ bounding a disc component of $R_\pm$ satisfies the hypotheses of (3).
\end{proof}

The next lemma guarantees the existence of full vertices.

\begin{lemma}\label{Full vertices Exist}
The following hold:
\begin{enumerate}
\item Suppose that $R$ is a component of $R(\gamma)$ and that $\delta$ is a component of $\boundary R$. Let $E$ be the component of $\boundary M - \alpha$ containing $R$. Then there exists a full vertex in $E$. In particular, every disc component of $R_\pm$ contains a full vertex.
\item If $\delta$ is an essential loop, then there exists a full vertex (other than the vertex in $\delta$) on both sides of $\delta$.
\end{enumerate}
\end{lemma}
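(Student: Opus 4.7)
The argument relies on the fact that $\boundary M$ is a disjoint union of spheres (since $H_2(M,\boundary M) = 0$), combined with Lemma~\ref{Apply Gabai Discs} and an innermost-disc argument on each sphere.

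For part (1), let $\Sigma$ be the sphere component of $\boundary M$ containing $\delta$, so that $E$ is an open disc in $\Sigma$. I first note that $E$ contains at least one vertex of $\Gamma'$: when $R$ itself is a disc component of $R_\pm$ this is immediate from Lemma~\ref{Elementary}, and in the general case one argues via the $\beta'$-tautness of $(M,\gamma,\beta')$ and the structure of the sutures on $\Sigma$.  If no vertex in $E$ is the base of an essential loop of $\Gamma'$, every vertex in $E$ is full by Lemma~\ref{Apply Gabai Discs}(1) and we are done. Otherwise, for any essential loop $\alpha$ based at a vertex $w \in E$, since the edges of $\Gamma'$ are pairwise disjoint and $\alpha \cap \delta = \nil$, exactly one of the two discs of $\Sigma - \alpha$ is disjoint from $\delta$, and that disc necessarily lies in $E$. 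Choose such a disc $D_\alpha$ that is innermost (say, containing no other essential loop of $\Gamma'$ based at an interior vertex). By minimality, no vertex in $\inter D_\alpha$ can be the base of an essential loop of $\Gamma'$: since disjoint simple closed curves on a sphere are nested, the corresponding inner disc would lie strictly inside $D_\alpha$, contradicting the choice.  Hence either $\inter D_\alpha$ contains a vertex, which is full by Lemma~\ref{Apply Gabai Discs}(1); or $\inter D_\alpha$ is vertex-free, so $D_\alpha$ vacuously has all interior vertices in $R_+$, and Lemma~\ref{Apply Gabai Discs}(2) shows the basepoint of $\alpha$ (which lies in $E$) is full.

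For part (2), when $\delta$ is an essential loop of $\Gamma'$ based at a vertex $v$, each of the two discs $D_1,D_2 \subset \Sigma$ bounded by $\delta$ contains a vertex of $\Gamma'$ in its interior, by the definition of ``inessential''. I apply the innermost-disc argument from part~(1) separately inside each $D_i$, with $\delta$ playing the role previously played by $\delta$ in that argument: either some vertex in $\inter D_i$ has no essential loop and is full by Lemma~\ref{Apply Gabai Discs}(1), or an innermost essential loop based at some $w \in \inter D_i$ yields (exactly as in part~(1)) a full vertex, either in the interior of its inner disc or at $w$ itself. In either outcome, the full vertex produced lies in $\inter D_i$ and so is distinct from $v \in \boundary D_i$.

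The main obstacle is bookkeeping for the innermost-disc argument: one must check that an innermost inner disc exists (trivial once any essential loop is present, by well-ordering the finite collection of such discs) and, more delicately, that the innermost property genuinely forbids essential loops at interior vertices. This rests on the topological fact that two disjoint simple closed curves on a sphere are nested, which forces the inner disc of any essential loop based at an interior vertex of $D_\alpha$ to lie strictly inside $D_\alpha$. A secondary subtlety is making sure that the vertex we supply lies in $E$ (resp.\ $\inter D_i$) and is a vertex of $\Gamma'$ rather than merely of $\Gamma$; this is handled by working throughout with the simplified pair $(\beta',Q')$ produced at the beginning of the proof of Theorem~\ref{Combinatorics}.
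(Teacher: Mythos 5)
Your proof is correct and follows essentially the same innermost-essential-loop strategy as the paper's, producing a full vertex via Lemma~\ref{Apply Gabai Discs} after nesting the inner discs. Two small remarks: the subcase in part (1) where $\mathring{D}_\alpha$ is vertex-free cannot actually occur (such an $\alpha$ would be inessential by definition), and the existence of a vertex in $E$ is more cleanly seen the way the paper does it, by observing that an innermost suture inside $E$ bounds a disc component of $R(\gamma)$, which carries a vertex by Lemma~\ref{Elementary}.
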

\begin{proof}
If $\delta$ is a component of $\boundary R_\pm$, there exists a disc component $D$ of $R(\gamma)$ contained in $E$, so it suffices to prove the lemma in the cases when $\delta$ is either the boundary of a disc component of $R_\pm$ or an innermost essential loop. By Lemma \ref{Elementary}, every disc component of $R_\pm$ contains a vertex of $\Gamma'$. If there is no essential loop based at the vertex, then it is full, so suppose that there is an essential loop. Without loss of generality, we may assume it is innermost in $D$. Since $D$ is a disc, there is a vertex $v$ interior to that essential loop. No essential loop can be based at $v$ and so $v$ is full. Thus, we have shown (1) and we have also shown that if $\delta$ is an essential loop in a disc component of $R(\gamma)$ then there is a full vertex interior to the loop. To conclude, note that if $\delta$ is any essential loop then there is a suture or vertex on each side of $\delta$. Let $S$ be one of the sides. If there is a suture in $S$, then there is a disc component of $R(\gamma)$ in $S$ and we are done. If there is no suture in $S$, then both $\delta$ and $S$ must be contained in a disc component of $R(\gamma)$ and we are, once again, done.
\end{proof}

The next lemma produces the inequality $I(Q) > 2\mu$. It does not use the assumption about the non-existence of Gabai discs.

\begin{lemma}\label{Connectivity}
Suppose that $v,w \in R_\pm$ are full vertices of $\Gamma'$ and that $\delta$ is a full component of $\boundary R_\pm$. Then either
\begin{enumerate}
\item[(a)] there is an edge of $\Gamma' \cap R_\pm$ joining $v$ to $w$ and joining each of $v$ and $w$ to $\delta$, or
\item[(b)] $I(Q) \geq 2\mu$.
\end{enumerate}  
\end{lemma}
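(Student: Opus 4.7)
The plan is to suppose that (a) fails and show $I(Q)\geq 2\mu$. Since $I(Q)=I(Q')$, I work with the simplified surface $Q'$. A useful preliminary observation is that both product discs and self-amalgamating disc components of $Q'$ contribute no non-loop edges of $\Gamma'$ at any vertex. For product discs this is because they are disjoint from $\beta$. For self-amalgamating discs, $\beta$-tautness forces the two endpoints of any edge of $\beta$ to lie in opposite sides $R_-$ and $R_+$; thus the two $\partial M$-arcs of such a disc (which are disjoint from $\gamma$) must each be a loop at the corresponding endpoint of the traversed edge, and these loops are not counted in $\rho$. Consequently, the $\geq\mu$ non-loop edges at a full vertex $v$ all lie on non-product, non-self-amalgamating components of $Q'$.

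By the three-fold symmetry of (a), I treat the case where no arc of $\Gamma'\cap R_\pm$ joins $v$ to $w$; the cases where $v$-$\delta$ or $w$-$\delta$ is missing are handled analogously, using the fullness of $\delta$ to supply $\geq\mu$ arcs meeting $\delta$ that contribute to $|\partial Q'\cap\gamma|$ rather than to $\mu(Q')$. Fullness of $v$ and $w$ yields $\geq\mu$ non-loop edges of $\Gamma'$ at each, giving $\geq\mu$ arcs of $\Gamma'\cap R_\pm$ incident to each; because none joins $v$ to $w$, these $\geq 2\mu$ arcs are distinct, each contributing $1$ to $\mu(Q')$ via its vertex endpoint. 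Hence $\mu(Q')\geq 2\mu$.

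The principal obstacle is translating this count into the inequality $I(Q')\geq 2\mu$. Non-disc components easily pay for their edge-ends, since $I(q)\geq\mu(q)+|\partial q\cap\gamma|$, so each edge-end on such a component contributes at least $1$ to the index. The difficulty is with non-product, non-self-amalgamating disc components, where $\mu(q)$ may exceed $I(q)=|\partial q\cap\gamma|+\mu(q)-2$. I would address this via a charge-counting argument that attributes each arc-endpoint at $v$ or $w$ to its component and uses the combinatorial structure of $\partial q$ --- in particular the alternation between $R_+$ and $R_-$ forced by $\beta$-tautness across the $\mu(q)$ traversals of $\beta$ --- to balance the deficit against the $|\partial q\cap\gamma|$ term, recovering the $2\mu$ bound. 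This is the core technical step and follows the general pattern of the combinatorial arguments of Gabai and Scharlemann in the sutured-manifold literature.
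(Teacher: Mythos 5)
Your plan matches the paper's in outline, and some of the preliminary observations are sound, but there is a genuine gap at exactly the step you flag as the ``core technical step.'' You identify the right difficulty --- disc components of $Q'$ carry a $-2\chi(q) = -2$ deficit --- but you do not actually resolve it; you only assert that an alternation-based charge-counting argument ``would'' work. The mechanism you propose, alternation of $\partial q$ between $R_+$ and $R_-$ forced by $\beta$-tautness, is not what closes the gap. Alternation alone does not produce suture crossings: a boundary curve can alternate sides by reversing direction along a $\beta'$-arc rather than by crossing $\gamma$. The ingredient the paper actually uses, and the one your argument never invokes for the disc case, is \emph{non-adjacency}: because no edge of $\Gamma' \cap R_\pm$ joins $v$ to $w$ (or $v$ to $\delta$, etc.), every non-loop edge-end at $v$ or $w$ must terminate either at a suture or at a vertex of some edge other than $e_v$ and $e_w$, and each such termination adds one to $|\partial q \cap \gamma| + \mu(q)$ beyond the $s+t$ traversals already counted. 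That yields $|\partial q \cap \gamma| + \mu(q) \geq 2(s+t) - 2L$, so for a disc $I(q) \geq -2 + 2(s+t) - 2L$. One then still needs to rule out the boundary case $s+t = 1$ (which would force $L = 0$ and leave $I(q) \geq 0$ but not $\geq 1$); this is where the removal of cancelling and non-self-amalgamating discs from $Q'$ is used --- a disc with $s+t=1$ and $I = 0$ would be one of these. Summing the per-component bound $I(q_\epsilon) \geq (\text{non-loop edge-ends of } q_\epsilon \text{ at } v \text{ or } w)$ then gives $I(Q') \geq \rho(v) + \rho(w) \geq 2\mu$.

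Two smaller points. First, your derivation of $\mu(Q') \geq 2\mu$ is correct, but the appeal to non-adjacency there is unnecessary: the bound $\mu(Q') \geq d(v) + d(w) \geq \rho(v) + \rho(w)$ already follows because $v, w$ lie on the same side $R_\pm$ and hence are endpoints of distinct arcs $e_v \neq e_w$ of $\beta'$, so the spanning arcs in $A(e_v)$ and $A(e_w)$ are disjoint families. Second, the preliminary observation that product discs and self-amalgamating discs contribute no non-loop edges at any vertex is true, but it is a detour: the paper's per-component index estimate handles all components of $Q'$ uniformly, and excluding those two types does nothing to help with the remaining disc components, which is where the actual difficulty lives.
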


If conclusion (a) holds for $v$ and $w$ we say that they are \defn{adjacent} vertices of $\Gamma' \cap R_\pm$ and if conclusion (a) holds for $v$ and $\delta$ we say that $v$ and $\delta$ are \defn{adjacent} in $R_\pm$.

\begin{proof}
Suppose that $v$ and $w$ are non-adjacent full vertices in $\Gamma' \cap R_\pm$. Let $\epsilon$ be a component of $\boundary Q'$ that traverses the edge $e_v$ of $\beta'$ with endpoint $v$ exactly $s$ times and traverses the edge of $e_w$ of $\beta'$ with endpoint $w$ exactly $t$ times. Assume that $s + t \geq 1$. Let $L_\epsilon$ denote the number of loops of $\epsilon \cap \boundary M_n$ based at either $v$ or $w$. Since $v$ and $w$ are non-adjacent, no component of $\epsilon \cap \boundary M_n$ joins $v$ to $w$ without passing through a suture. Thus, number of times that $\epsilon$ passes through the sutures or traverses an edge of $\beta' - (e_v \cup e_w)$ is at least $(s+t) - 2L_\epsilon$. 

Let $q_\epsilon$ denote the component of $Q'$ containing $\epsilon$. If $q_\epsilon$ is a disc, we have
\[
I(q_\epsilon) \geq -2 + 2(s+t) - 2L_\epsilon
\]
Moreover, for $q_\epsilon$ a disc, $I(q_\epsilon)$ is at least $s+t - 2L_\epsilon$ unless $s+t = 1$. The latter possibility implies that $q_\epsilon$ is a cancelling disc, which is a contradiction to $q_\epsilon$ being a component of $Q'$. Thus, if $q_\epsilon$ is a disc, then $I(q_\epsilon) \geq s+ t - L_\epsilon$.

Suppose that $q_\epsilon$ is not a disc. Let $\epsilon_1, \hdots, \epsilon_p$ be the components of $\boundary q_\epsilon$ and let $s_i$ and $t_i$ be the number of times that $\epsilon_i$ traverses $e_v$ and $e_w$ respectively. Let $L_i$ be the number of loops of $\epsilon_i$ based at either $v$ or $w$. Since $q_\epsilon$ is not a disc, we have
\[
I(q_\epsilon) \geq \sum_{i=1}^p 2(s_i + t_i) - 2L_i \geq \sum_{i=1}^p (s_i - t_i) - 2L_i
\]
Summing over all components of $Q'$ that traverse either $e_v$ or $e_w$ we find that $I(Q)$ is at least the number of times that $\boundary Q$ traverses $e_v \cup e_w$ minus twice the number of loops based at $v$ and $w$. That is, $I(Q) \geq \rho(v) + \rho(w) \geq 2\mu$.

To prove the lemma for $v$ and $\delta$ and for $w$ and $\delta$, we could repeat the previous proof with $\delta$ in place of $w$ or apply what we have already done to the sutured manifold obtained by converting each arc of $\beta$ to a suture.
\end{proof}

We can now deduce that either $I(Q) \geq 2\mu$ or $|\gamma| = 1$.

\begin{lemma}\label{Complete Graph}
Assume that $I(Q) < 2 \mu$. Then the boundary of $M$ consists of a single sphere and $|\gamma| = 1$. Furthermore, there are no essential loops in $\Gamma'$ and every vertex and each component of $\boundary R(\gamma)$ is full. Consequently, $\Gamma' \cap R_\pm(\gamma)$ is the union of simple closed curves and a complete planar graph where each vertex is joined to $\boundary R_\pm(\gamma)$ by an edge.  
\end{lemma}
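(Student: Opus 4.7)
Proof proposal: All four conclusions will be derived by combining Lemmas \ref{Apply Gabai Discs}, \ref{Full vertices Exist}, and \ref{Connectivity}. The first two identify many full vertices and full boundary components, while the third (together with the hypothesis $I(Q) < 2\mu$) forces any two full objects in the same $R_\pm$ to be joined by an edge inside a single component of $R_\pm$. Any obstruction to such a joining---two full objects in distinct components of the same $R_\pm$, or two objects separated in the same component by another edge of $\Gamma'$---immediately yields $I(Q) \geq 2\mu$, contradicting the hypothesis.

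First I would establish that $\boundary M$ is a single sphere with $|\gamma|=1$. Since $H_2(M,\boundary M)=0$, $\boundary M$ is a disjoint union of spheres; each must carry at least one suture by Lemma \ref{Elementary}. If some sphere component $\Sigma$ carries $k \geq 2$ sutures, the dual tree of sutures on $\Sigma$ has at least two leaves, giving at least two disc components of $R(\gamma)$ on $\Sigma$. Each such disc contains a full vertex by Lemma \ref{Full vertices Exist}(1) and has a full bounding suture by Lemma \ref{Apply Gabai Discs}(4). A case analysis over the possible dual tree configurations produces, in each case, either two full vertices or a full vertex and a full suture lying in distinct components of the same $R_\pm$; Lemma \ref{Connectivity} then yields a contradiction. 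An analogous argument with full vertices on disc components of $R_\pm$ on distinct sphere components of $\boundary M$ rules out disconnected boundary. Hence $\boundary M$ is a single sphere with $|\gamma|=1$, and $R_+, R_-$ are each a single disc bounded by the unique suture $\gamma$.

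Next I would rule out essential loops. Suppose for contradiction that $\delta \subset R_+$ is an essential loop; choose $\delta$ innermost, in the sense that the sub-disc $D$ of $R_+$ bounded by $\delta$ on one side contains no other essential loop. Then every vertex of $\Gamma'$ in $D^{\circ}$ is full by Lemma \ref{Apply Gabai Discs}(1), and Lemma \ref{Full vertices Exist}(2) provides such a vertex $w \in R_+$. The suture $\gamma = \boundary R_+$ is full by Lemma \ref{Apply Gabai Discs}(4). Since $w$ and $\gamma$ are both full and lie in the single disc $R_+$, Lemma \ref{Connectivity} requires an edge of $\Gamma' \cap R_+$ joining them; but any such edge would have to cross $\delta$, contradicting the embeddedness of $\Gamma'$.

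From this, Lemma \ref{Apply Gabai Discs}(1) immediately gives that every vertex is full, and Lemma \ref{Apply Gabai Discs}(4) gives that each of the two components of $\boundary R(\gamma)$ (the two copies of $\gamma$ appearing as $\boundary R_+$ and $\boundary R_-$) is full. Lemma \ref{Connectivity} then forces every pair of vertices in $R_\pm$ to be joined by an edge in $R_\pm$ and every vertex to be joined to $\boundary R_\pm$ by an edge, giving the complete planar graph structure. Any remaining components of $\Gamma' \cap R_\pm$ have no endpoints at vertices of $\Gamma'$ and are disjoint from $\gamma$, so they must be simple closed curves. The main obstacle will be the case analysis in Step 1 when the leaf discs of the dual tree do not all share a color---for instance, a ``path'' dual tree with odd $k$---where one cannot simply pair two same-color disc vertices; Lemma \ref{Apply Gabai Discs}(3) must be invoked to identify a full interior suture separating components of the same $R_\pm$, and when annular intermediate components themselves carry vertices supporting essential loops, an additional innermost analysis inside those annuli will be required before the main Lemma \ref{Connectivity} contradiction can be extracted.
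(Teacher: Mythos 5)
Your general strategy — use Lemmas \ref{Elementary}, \ref{Apply Gabai Discs}, and \ref{Full vertices Exist} to locate full vertices and full boundary components, and then use Lemma \ref{Connectivity} to force them all into one component of $R_\pm$ — is indeed the same shape as the paper's proof, and your Steps 2 and 3 would go through once Step 1 is secured. But Step 1 has a genuine gap, and you've noticed it yourself in the final paragraph without actually closing it. In the ``odd path'' configuration $D_+, A_1, A_2, \ldots, A_{k-1}, D_-$ the two leaf discs have opposite colours, so pairing full leaf vertices yields nothing, and your fallback --- invoking Lemma \ref{Apply Gabai Discs}(3) to certify an interior suture full --- does not apply here: conclusion~(3) requires every vertex of $\Gamma'$ interior to the relevant disc of $\boundary M$ to lie in a single $R_+$ or $R_-$, and as soon as even one annulus $A_i$ carries a vertex (which must necessarily be of the opposite sign from whichever leaf disc that side of the suture contains) this hypothesis fails. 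So the remedy you gesture at is circular: you need to know the annuli are vertex-free to make (3) fire, but that is precisely what remains to be shown.

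What the paper actually does avoids trying to get $|\gamma| = 1$ in one stroke. It first establishes (as you correctly do) that $R_+$ and $R_-$ each contain at most one disc component, whence $\boundary M$ is a single sphere with exactly one $D_+$ and one $D_-$ and all sutures parallel --- but possibly many. It then shows separately that the intervening annuli carry no vertices at all, using Lemma \ref{Apply Gabai Discs}(2) (not (3)): since $D_\pm$ already captures every full vertex, any vertex $v$ in an annulus supports an essential loop, and such a loop must enclose one of $D_\pm$ (because there is a full vertex inside it by Lemma \ref{Full vertices Exist}(2)). Taking the innermost-indexed annulus $A_i$ containing a vertex and the vertex $v\in A_i$ closest to $D_-$, the essential loop at $v$ on the $D_-$ side bounds a disc whose interior vertices all lie in $D_-$, i.e.\ all in $R_-$. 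Conclusion~(2) then makes $v$ full, contradicting $v\notin D_\pm$. Only after this step does the paper apply conclusion~(3) to conclude that every component of $\boundary R(\gamma)$ is full and hence, via Lemma \ref{Connectivity}, belongs to $\boundary D_\pm$, giving $|\gamma| = 1$. This ``first purge the annuli, then count the sutures'' order is the missing idea in your Step 1, and it is the heart of the lemma; without it the proof is incomplete.
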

\begin{proof}
By Lemma \ref{Elementary}, no component of $R(\gamma)$ is a sphere. Suppose that $D_1$ and $D_2$ are disc components of $R_\pm$. Each of $D_1$ and $D_2$ contains a full vertex by Lemma \ref{Full vertices Exist}. By Lemma \ref{Connectivity}, those full vertices must be connected by an edge of $\Gamma' \cap R_\pm$. Thus, $D_1 = D_2$. We conclude that $R_\pm$ has at most one component that is a disc. If a component $P$ of $\boundary M$ contains a suture, then at least two components of $R(\gamma) \cap P$ are discs. Thus, $\boundary M$ is connected and there is exactly one disc component $D_-$ of $R_-$ and exactly one disc component $D_+$ of $R_+$. Hence, on $\boundary M$ all sutures are parallel. We recall that both $D_-$ and $D_+$ contain a full vertex. Indeed by Lemma \ref{Connectivity}, they contain every full vertex of $\Gamma' \cap R_-$ and $\Gamma' \cap R_+$ respectively. 

Suppose that $A$ is a component of $R_\pm - D_\pm$. Since $D_\pm$ contains a full vertex and since full vertices in $R_\pm$ are adjacent, $A$ cannot contain a full vertex. Suppose that $v$ is a vertex of $\Gamma' \cap A$. Since $v$ is not full, by Lemma \ref{Apply Gabai Discs}, there is an essential loop $\epsilon$ based at $v$. By Lemma \ref{Full vertices Exist}, there is a full vertex $w$ interior  to $\epsilon$. Since full vertices in $R_\pm$ are adjacent, $w$ must lie in either $D_\pm$ or $D_\mp$. Consequently, $D_\pm$ or $D_\mp$ is interior to $\epsilon$. We conclude that, as simple closed curves on $\boundary M$, all sutures and all essential loops are parallel.  Number, in order, the annuli of $R(\gamma)$ by $A_1, \hdots, A_n$ so that $A_1$ is adjacent to $D_-$ and $A_n$ is adjacent to $D_+$. Note that the $A_i$ alternate between $R_+$ and $R_-$ beginning with $R_+$. 

Suppose that $i$ is the smallest number such that $A_i$ contains a vertex of $\Gamma'$. Since there is an essential loop at every vertex of $\Gamma' \cap A_i$ and since each essential loop is parallel to the sutures, we may choose such a loop $\epsilon$, based at a vertex $v \in A_i \cap \Gamma'$ that is closest to $D_-$ and so any vertex in the component of $\boundary M - \epsilon$ containing $D_-$ is contained in $D_-$. Then $v$ satisfies the conditions of Conclusion (2) of Lemma \ref{Apply Gabai Discs}. Hence $v$ is full and must, therefore, lie in $D_-$ or $D_+$. We conclude that every vertex of $\Gamma'$ lies in $D_\pm$.

Since each vertex of $\Gamma'$ lies in $D_\pm$, by Conclusion (3) of Lemma \ref{Apply Gabai Discs}, each component of $\boundary R(\gamma)$ is full. By Lemma \ref{Connectivity}, each boundary component is joined to a vertex of $D_\pm$ by an edge of $\Gamma' \cap R_\pm$. Consequently, each component of $\boundary R(\gamma)$ is a component of $\boundary D_\pm$ and so $|\gamma| = 1$. 

Finally, notice that if $\epsilon$ is an essential loop in $\Gamma'$, then a full vertex in its interior could not be joined by an edge of $\Gamma' \cap R(\gamma)$ to $\boundary R(\gamma)$. Thus there are no essential loops in $\Gamma'$. Since there are no essential loops, by Lemma \ref{Apply Gabai Discs}, each vertex is full.
\end{proof}

\begin{lemma}\label{Heeg. genus 3}
$M$ is a rational homology sphere with torsion in $H_1(M;\Z)$ and $M$ has Heegaard genus at most 3.
\end{lemma}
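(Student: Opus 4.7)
The plan is to combine Poincar\'e--Lefschetz duality with an explicit Heegaard splitting built from the combinatorial structure of Lemma \ref{Complete Graph}. From that lemma we have $\boundary M = S^2$, $|\gamma| = 1$, and $\Gamma' \cap D_\pm$ is a complete graph on the $n$ endpoints of $\beta'$ in $D_\pm$ together with an edge joining each vertex to the suture $\gamma$. Collapsing $\gamma$ to a point yields a planar embedding of $K_{n+1}$, so $n \leq 3$; moreover $n \geq 1$, for otherwise $\beta' = \nil$ and $(M,\gamma,\nil)$ would be $\nil$-taut, contradicting our hypothesis.

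For the rational homology claim, apply Poincar\'e--Lefschetz duality: since $M$ is compact, orientable, and $3$-dimensional, $H_2(M,\boundary M;\Q) \cong H^1(M;\Q) \cong \mathrm{Hom}(H_1(M;\Z),\Q)$. The assumption $H_2(M,\boundary M) = 0$ therefore forces $H_1(M;\Z)$ to be a finite torsion group, so $M$ is a rational homology ball. For nontriviality of the torsion, note that $(M,\gamma,\nil)$ is not $\nil$-taut while each component of $R(\gamma) = D_+ \cup D_-$ is a $\nil$-taut disc (vacuously incompressible, with Thurston norm zero), so the failure of tautness forces $M$ itself to be reducible. The reducibility supplies a nontrivial closed prime summand of $M$ which is a rational homology sphere; once the Heegaard genus bound below rules out the only-$S^3$-summand possibility, this contributes nontrivial torsion to $H_1(M;\Z)$.

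For the Heegaard genus bound, cap $\boundary M$ with a $3$-ball $B$ to form the closed $3$-manifold $\hat M$; the Heegaard genus of $M$ equals that of $\hat M$. Set $V = B \cup \eta(\beta')$. Because each arc of $\beta'$ has both endpoints on $\boundary B = \boundary M$, each neighborhood $\eta(\beta_i')$ attaches to $B$ as a $1$-handle, so $V$ is a handlebody of genus $n \leq 3$. The complement $N = \hat M \setminus V = M - \inter{\eta}(\beta')$ has boundary the closed genus $n$ surface $\boundary V$, and if $N$ is itself a handlebody then $(V,N)$ exhibits a Heegaard splitting of $\hat M$ of genus $n \leq 3$.

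The main obstacle is showing that $N$ is a handlebody. The plan is to use the $n$ edges of $\Gamma' \cap D_+$ joining the vertices to $\gamma$, together with matching edges in $D_-$, closed up by meridional arcs on $\boundary \eta(\beta')$, to construct $n$ pairwise disjoint compressing discs for $\boundary N$ in $N$. Disjointness follows from the disjoint planar structure of the chosen edges. That cutting $N$ along this complete system reduces $\boundary N$ to a $2$-sphere bounding a ball in the remaining manifold uses the irreducibility of $N$ (inherited from the $\beta'$-tautness of $(M,\gamma,\beta')$) together with the absence of Gabai discs for $Q$, which rules out the pathological recombinations of components of $Q'$ that would otherwise obstruct the handlebody structure.
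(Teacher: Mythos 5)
Your overall skeleton matches the paper's: use the planarity of the complete graph to bound the number of arcs of $\beta'$ by $3$, build a genus-$\leq 3$ splitting surface from $\boundary M$ plus a regular neighborhood of $\beta'$, and then show the complementary piece is a handlebody. Your Poincar\'e--Lefschetz argument for the rational homology ball claim is correct and more explicit than what the paper writes. However, two of your steps have genuine gaps.

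First, the torsion claim. You argue that reducibility of $M$ supplies a closed prime summand, and that ruling out $S^3$ forces that summand to contribute torsion to $H_1$. That inference is false: a non-$S^3$ prime summand can be an integral homology sphere (e.g.\ the Poincar\'e sphere), contributing nothing to $H_1$. The Heegaard genus bound does not repair this. The paper instead invokes \cite[Lemma~9.6]{S1}, which extracts the torsion from the combinatorial data (essentially from a Scharlemann-cycle/alternating-disc structure), not merely from reducibility. Your route needs a replacement for this step.

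Second, the Heegaard genus / handlebody argument. You try to build compressing discs for $N = M - \inter\eta(\beta')$ from edges of $\Gamma'$ on $\boundary M$ capped off with meridional arcs on $\boundary\eta(\beta')$. That construction only yields simple closed curves on $\boundary_+ V$; there is no reason these curves bound discs in $N$, and you do not produce one. The paper's proof does something crucially different: it runs an index argument over the components of $Q'$ to show that for each arc $\beta_i$ of $\beta'$ there must exist a \emph{disc component} $q_i$ of $Q'$ (an ``alternating disc'') whose boundary traverses $\beta_i$ always in the same direction. These $q_i$ are honest embedded discs in $N$, and compressing $\boundary_+ V$ along them yields spheres disjoint from $\beta'$, which bound balls by $\beta'$-tautness, giving the handlebody. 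Your appeal to irreducibility and absence of Gabai discs to ``rule out pathological recombinations'' does not substitute for exhibiting the discs; the index computation is the engine that produces them, and it is missing from your proposal.
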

\begin{proof}
By the easy half of Kuratowski's theorem, $\Gamma' \cap R_\pm$ has at most 3 vertices, so $\beta$ has at most 3 edges. Let $\beta_i$ be one such edge, let $v$ be one of its vertices, and let $\st(v)$ denote the edges of $\Gamma' \cap R_\pm$ with exactly one endpoint at $v$. Let $q$ be a component of $Q'$ whose boundary traverses $\beta_i$. Then,
\[
I(q) \geq -2\chi(q) + 2|\boundary q \cap \st(v)|.
\]
If $q$ is not a disc, then $I(q) \geq 2|\boundary q \cap \st(v)|$. Recall that $I(q)$ is even. Thus, if $q$ is a disc and if the inequality is strict, then once again we have $I(q) \geq 2|\boundary q \cap \st(v)|$. If no component of $Q'$ traversing $\beta_i$ is a disc $q$ satisfying $I(q) = -2 + 2|\boundary q \cap \st(v)|$, then summing over all components $q$ of $Q'$ traversing $\beta_i$, we would have:
\[
2\mu > I(Q') \geq 2\rho(v) \geq 2\mu,
\]
a contradiction. Consequently, some component $q_i$ of $Q'$ is a disc with the property that each component of $\boundary q \cap R_\pm$ lies in $\st(v)$. Consequently, $\boundary q$ alternately traverses $\beta_i$ and then either crosses $\gamma$ or traverses an arc of $\beta' - \beta_i$. We call such a disc an \defn{alternating disc}.

Let $V$ be the union of a collar neighborhood of $\boundary M$ with a regular neighborhood of $\beta'$. Note that $V$ is a compressionbody with $\boundary_- V = \boundary M$ and $\boundary_+ V$ a surface of genus at most 3. Since the boundary of each $q_i$ traverses the corresponding $\beta_i$ always in the same direction, each $q_i$ is non-separating. Since no $q_i$ is a cancelling or amalgamating disc, $\boundary q_i$ traverses $\beta_i$ at least twice. See Figure \ref{Fig:  CompBody} for a picture of $V$.

\begin{figure}[tbh]
\centering
\includegraphics[scale=0.3]{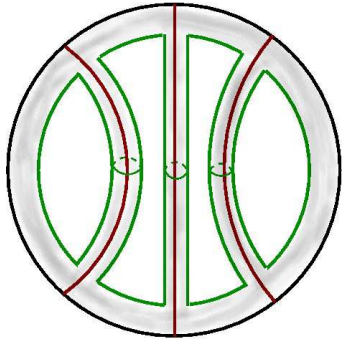}
\caption{The compressionbody $V$ is shaded.}
 \label{Fig:  CompBody}
\end{figure}

Compressing $\boundary_+ V$ along the discs $q_i$ for each $i$ produces the union of spheres in $M$ each of which is disjoint from $\beta'$. Since $(M,\gamma,\beta')$ is $\beta'$-taut, each of those spheres bounds a 3-ball in $M - \beta'$. Consequently $M - \inter{V}$ is a handlebody. Thus, $\boundary_+ V$ is a Heegaard surface for $M$ of genus at most 3. By \cite[Lemma 9.6]{S1}, $M = M' \# W$ where $H_1(W)$ is finite and non-trivial.
\end{proof}

Finally, we show that there are very strong restrictions on the 3-manifold $M$.
\begin{lemma}
$M$ has a connect summand of Heegaard genus one or two.
\end{lemma}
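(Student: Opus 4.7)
The plan is to argue by cases on $|\beta|$, which by Lemma \ref{Heeg. genus 3} is at most 3. Throughout I use two conclusions of Lemma \ref{Heeg. genus 3}: that $M$ is a rational homology sphere with torsion in $H_1(M)$, and that $M = M_0 \# W$ for some $W$ with finite non-trivial $H_1(W)$. In particular $M \neq S^3$. In the easy case $|\beta|\leq 2$, the compressionbody $V$ appearing in the proof of Lemma \ref{Heeg. genus 3} already has genus at most 2, so $M$ admits a Heegaard splitting of genus at most 2; since $M$ is non-trivial, $M$ itself serves as a connect summand of $M$ of Heegaard genus 1 or 2.

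The hard case is $|\beta|=3$. Here Lemma \ref{Heeg. genus 3} supplies a genus-3 Heegaard splitting $V\cup_\Sigma H$, a handlebody structure on $H=M-\inter{V}$, and three alternating discs $q_1,q_2,q_3\subset H$, each $\boundary q_i$ winding at least twice around the $\beta_i$-handle of $V$. My plan is to produce a reducing sphere for this splitting and then invoke Haken's lemma to extract a summand of controlled Heegaard genus. Since $M=M_0 \# W$ is a non-trivial connect sum (as $W\neq S^3$), $M$ is reducible; Haken's lemma then supplies a reducing sphere $S$ meeting $\Sigma$ in a single essential circle. Cutting along $S$ realizes the genus-3 splitting as a connect sum of Heegaard splittings of genera $g_1,g_2$ with $g_1+g_2\leq 3$. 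Any $S^3$ factor corresponds to trivial stabilization; discarding these, the remaining pieces have positive Heegaard genera summing to at most 3, so at least one has Heegaard genus 1 or 2. Because $H_1(M)$ has non-trivial torsion, which distributes across the connect sum, at least one non-trivial summand has non-trivial $H_1$; this is the desired connect summand, and if its Heegaard genus equals 1 it must be a lens space.

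The main obstacle I anticipate is ensuring that the reducing sphere produced by Haken's lemma is genuinely non-stabilizing and that the combinatorial input from the alternating discs pins down the Heegaard genus of the resulting summand. My plan for handling this is to exploit the structural fact that each $\boundary q_i$ traverses $\beta_i$ at least twice: this prevents $q_i$ from being a meridian disc of the closed-up genus-3 handlebody $V\cup B^3$, so any reducing sphere must interact with at least one $\beta_i$-handle in a non-trivial way. Chasing this interaction through the sphere-sum decomposition isolates a summand built from a single $\beta_i$-handle amalgamated with the multiply-wrapping disc $q_i$; such an amalgam has Heegaard genus one and non-trivial finite cyclic first homology, i.e. it is a lens space summand of $M$. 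This simultaneously completes the lemma and accounts for clause (e) of Theorem \ref{Combinatorics}.
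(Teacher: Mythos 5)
There is a genuine gap. In your hard case $|\beta|=3$ you assert that since $M = M_0 \# W$ with $W \neq S^3$, the manifold $M$ is a non-trivial connect sum and hence reducible, and you then invoke Haken's lemma to get a reducing sphere. But the decomposition $M = M_0 \# W$ furnished by Lemma \ref{Heeg. genus 3} (via Scharlemann's Lemma 9.6) does not guarantee that $M_0 \neq S^3$: it is entirely possible that $M_0 = S^3$, in which case $M = W$ is just an irreducible rational homology sphere with torsion in $H_1$ and there is no reducing sphere at all. This is precisely the case the paper's proof opens by isolating ("If $M$ is reducible, then by Haken's Lemma \dots Thus, we may assume that $M$ is irreducible"), and it is the case that occupies all of the work that follows. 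Your argument simply does not apply when $M$ is irreducible and $|\beta|=3$.

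Your closing paragraph acknowledges the need for more combinatorial input from the alternating discs $q_i$, but it does not supply a proof. The paper's actual route in the irreducible case is quite different: it passes to the filled-in manifold $\widehat{M}$ with its extra arc $\beta_0$, progressively trims the parameterizing surface through a chain $Q'_1,\dots,Q'_5$ to kill closed curves and inessential loops while preserving index and fullness, and then runs a counting argument: if no component of $Q'_5$ is a non-self amalgamating disc and no alternating disc traverses exactly two arcs of $\beta'_1$, every remaining disc traverses at least four arcs, forcing $V/2 < 2\mu$ with $V \geq \mu|\beta'_1|$, hence $|\beta'_1|\leq 3$ and $|\beta|\leq 2$. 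The favorable subcases each yield either a stabilized genus-3 splitting (so genus $\leq 2$) or a lens space summand directly. Your sketch of "chasing the interaction through the sphere-sum decomposition" presupposes a sphere-sum decomposition exists, which is exactly what is at issue. To repair the proposal you would need to give, in the irreducible $|\beta|=3$ case, an explicit construction of a reducing sphere or a separate argument bounding the Heegaard genus by 2, along the lines of the paper's surface-cleanup and counting argument.
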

\begin{proof}
By Lemma \ref{Heeg. genus 3}, $M$ has Heegaard genus at most 3. If $M$ is reducible, then by Haken's Lemma it must have a connect summand that is a lens space. Thus, we may assume that $M$ is irreducible.

Let $\wihat{M}$ be the result of attaching a 2-handle along $\gamma$ and let $\beta_0$ be the cocore of the 2-handle. Consider the sutured manifold $(\wihat{M}, \nil, \beta' \cup \beta_0)$. Note that $Q'$ is still a parameterizing surface and that it has the same index. Let $\Gamma''$ be the graph of intersection between $\boundary Q'$ and $\boundary \wihat{M}$. Since each component of $\boundary R_\pm(\gamma)$ was full, each vertex of $\Gamma'$ is full and each component of $\Gamma''$ is a complete graph planar graph with at most 4 vertices. 

Discard from $Q'$ any component not traversing an edge of $\beta' \cup \beta_0$, let $Q'_1$ be the resulting surface. Since the index of each component is always non-negative, $I(Q') \geq I(Q'_1)$. Any component of $\boundary Q'_1$ that is a simple closed curve (disjoint from the vertices of $\Gamma''$) bounds a disc in the complement of $\Gamma''$ by Lemma \ref{Complete Graph}. Beginning with innermost discs attach discs to such components of $\boundary Q'_1$ and push into $\wihat{M}$ so that $Q'$ remains properly embedded. Let $Q'_2$ be the resulting surface. Since each component of $Q'_1$ had boundary traversing $\beta' \cup \beta_0$, $Q'_2$ remains a parameterizing surface and $I(Q'_1) \geq I(Q'_2)$. Discard from $Q'_2$ any component $q$ whose boundary has the property that for each component of $\beta' \cup \beta_0$, $\boundary q$ traverses the component the same number of times in each direction. That is, $\boundary q \cap \boundary \wihat{M}$ consists only of loops (necessarily inessential). Let $Q'_3$ be the resulting surface and observe that $I(Q'_2) \geq I(Q'_3)$. Finally, in the exterior of $\beta' \cup \beta'_0$, isotope $Q'_3$ to a surface $Q'_4$ by eliminating all the loops (which must be inessential) of $\boundary Q'_3 \cap \boundary \wihat{M}$. Observe that $Q'_4$ is still a parameterizing surface and that $I(Q'_3) \geq I(Q'_4)$. Let $\Gamma''_4$ be the graph of intersection between $\boundary Q'_4$ and $\boundary M$. Note that the only edges (not circles) that have been eliminated from $\Gamma''$ to obtain $\Gamma''_4$ are those that are inessential loops. As before, remove from $Q'_4$ any component disjoint from $\beta' \cup \beta_0$ and then cap off any circle components of $Q'_4 \cap \boundary \wihat{M}$. Let $Q'_5$ be the resulting surface and let $\Gamma'_5$ be its graph of intersection with $\wihat{M}$. Observe that $Q'_5$ is a parameterizing surface, that $I(Q'_4) \geq I(Q'_5)$, that there are no loops or circles in $\Gamma'_5$, and that each vertex of $\Gamma'_5$ remains full. 

Recall that $V$ is the compressionbody that is the union of a collar neighborhood of $\wihat{M}$ with a regular neighborhood of $\beta'_1 = \beta' \cup \beta'_0$. We showed in the proof of Lemma \ref{Heeg. genus 3} that the exterior of $V$ in $\wihat{M}$ (equivalently, in $M$) is a handlebody. If some component of $Q'_5$ is a non-self amalgamating disc, then the surface $\boundary_+ V$ is a stabilized Heegaard surface for $M$. Since $\boundary_+ V$ had genus at most 3, this implies that $M$ has genus at most 2. If $M$ had genus 0, it would be a 3-ball contradicting the fact that $H_1(M)$ has torsion (or that $(M,\gamma,\nil)$ is not $\nil$-taut). If $M$ had Heegaard genus 1, it would be a lens space. Thus if some component of $Q'_5$ is a non-self amalgamating disc, then $M$ has Heegaard genus 2 or is a lens space.

Suppose, therefore, that no component of $Q'_5$ is a non-self amalgamating disc. Since there are no loops in $\Gamma'_5$, each disc component of $Q'_5$ is an alternating disc for some component of $\beta'$. If there is such an alternating disc $q$ that traverses exactly two arcs of $\beta'_1$, then $M$ would have a lens space summand. Suppose that this does not occur. Then each disc of $Q'_5$ must traverse the arcs of $\beta'_1$ at least 4 times. Let $V$ be the total valence of the vertices of $\Gamma'_5$. The total number of discs is at most $V/4$. Thus,
\[
2\mu > I(Q) \geq V - 2V/4 = V/2.
\]
We have $V \geq \mu |\beta'_1|$, since each vertex of $\beta'_1$ is full. Consequently, $4 > |\beta'_1|$. Since $|\beta'_1|$ is an integer, $|\beta'_1| \leq 3$ and so $|\beta| \leq 2$. Since there is an alternating disc for each component of $\beta$, the Heegaard genus of $M$ is at most 2.
\end{proof}
And so we conclude the proof of Theorem \ref{Combinatorics}.

\begin{remark}
The assumption that $(M,\gamma,\nil)$ is not $\nil$-taut is used only to guarantee that the graph $\beta' \neq \nil$ and that if $|\gamma| = 1$, then $M$ is a non-trivial rational homology ball. If we allow $(M,\gamma,\nil)$ to be $\nil$-taut, then $M$ must be a 3-ball with a single suture in its boundary and we can conclude that either there is a Gabai disc for $Q$, or $I(Q) \geq 2\mu$, or $\beta' = \nil$. If $\beta' = \nil$ this implies that 1--manifold $\beta$ is unknotted in the 3-ball $M$.
\end{remark}

\subsection{The sutured manifold theorem}
In this section, suppose that $(N,\wihat{\gamma},\nil)$ is a connected $\nil$-taut sutured manifold and that $M$ is obtained from $N$ either by Dehn filling a component $T \subset T(\gamma)$ with slope $b$ or that $M$ is obtained from $N$ by attaching a 2-handle to a component $b$ of $\gamma$.  Let $\beta$ be either the core of the filling torus or the cocores of the 2-handle in $M$. Suppose that $Q \subset M$ is a parameterizing surface which is not disjoint from $b$.
\begin{theorem}\label{Sutured Mfld Thm}
Assume the following:
\begin{enumerate}
\item $H_2(M,\boundary M) \neq 0$
\item $(M,\gamma,\beta)$ is $\beta$-taut, where $\gamma = \wihat{\gamma} - b$
\item If $M$ is a solid torus then $\gamma \neq \nil$
\item Either $(M,\gamma,\nil)$ is not $\nil$-taut or $(M,\beta)$ has an exceptional class
\item $\boundary Q \cap b \neq \nil$ and no component of $Q$ is a disc or sphere disjoint from $\gamma \cup \beta$.
\end{enumerate}
Then one of the following holds:
\begin{enumerate}
\item There is a Gabai disc for $Q$ in $M$.
\item $I(Q) \geq 2|\boundary Q \cap b|$
\item $M = W_0 \# W_1$ where $W_0 \neq S^3$ and $W_1$ is a rational homology ball having non-trivial torsion in $H_1(W_2)$ of Heegaard genus at most 2.
\end{enumerate}
\end{theorem}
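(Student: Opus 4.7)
The approach is to build a $\beta$-taut sutured manifold hierarchy of $(M,\gamma,\beta)$ respecting the parameterizing surface $Q$ and to apply the combinatorial Theorem~\ref{Combinatorics} to a suitable terminal component. Set $\mu = |\boundary Q \cap b| \geq 1$; in both the Dehn-filling and $2$-handle settings this equals the number of spanning arcs of $\boundary Q$ in the meridional annulus of $\beta$. By Theorem~\ref{Theorem Down}, $(M,\gamma,\beta)$ admits a $\beta$-taut hierarchy $\mc{H}:(M,\gamma,\beta) \stackrel{S_1}{\to} (M_1,\gamma_1,\beta_1) \stackrel{S_2}{\to}\cdots\stackrel{S_n}{\to} (M_n,\gamma_n,\beta_n)$ respecting $Q$ and using allowable decomposing surfaces, with $H_2(M_n,\boundary M_n)=0$ and a resulting parameterizing surface $Q_n$ satisfying $I(Q)\geq I(Q_n)$ by Theorem~\ref{Index Decr.}.

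The crucial preparation is to ensure $(M_n,\gamma_n,\nil)$ is \emph{not} $\nil$-taut, so that Theorem~\ref{Combinatorics} applies non-vacuously at the end of the hierarchy. Under the first alternative of hypothesis~(4), the contrapositive of Theorem~\ref{Theorem Up} (for which hypothesis~(3) rules out solid-torus components disjoint from $\gamma$) does this directly. Under the second alternative, pick an exceptional class $\sigma \in H_2(M,\boundary M)$ and a $\beta$-taut representative $S$ of $\sigma$; by the definition of an exceptional class, no representative of $\sigma$ is simultaneously $\beta$-taut and $\nil$-taut, so $S$ is not $\nil$-taut. Arrange $\mc{H}$ so that its first decomposing surface $S_1$ equals $S$ (decompose along $S$ first and then invoke Theorem~\ref{Theorem Down} on the resulting $(M_1,\gamma_1,\beta_1)$ to extend to a full hierarchy). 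Then $S_1$ is a non-$\nil$-taut decomposing surface, so Theorem~\ref{Theorem Up} again prevents $(M_n,\gamma_n,\nil)$ from being $\nil$-taut.

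Choose a connected component $C \subset M_n$ for which $(C,\gamma_n\cap C,\nil)$ is not $\nil$-taut; necessarily $\beta_n\cap C \neq \nil$, since otherwise $\beta_n$-tautness and $\nil$-tautness coincide on $C$. Each edge of $\beta_n\cap C$ is a subarc of $\beta$, and because the decompositions preserve spanning-arc counts in the meridional annulus of $\beta$, $\boundary(Q_n\cap C)$ traverses each such edge at least $\mu$ times. Apply Theorem~\ref{Combinatorics} to $(C,\gamma_n\cap C,\beta_n\cap C)$ with parameterizing surface $Q_n \cap C$ and traversal count $\mu$. Its three conclusions correspond to ours: a Gabai disc for $Q_n\cap C$ in $C$ lifts, by reversing the decompositions following the arguments of \cite[Section 9]{S1}, to a Gabai disc for $Q$ in $M$; the bound $I(Q_n \cap C) > 2\mu$ combined with $I(Q)\geq I(Q_n)\geq I(Q_n\cap C)$ yields $I(Q) \geq 2\mu$; and conclusion~(3) gives $|\gamma_n \cap C|=1$ on a spherical boundary, $C$ a rational homology ball with non-trivial torsion in $H_1$, and Heegaard genus at most $3$ (in fact at most $2$ unless $C$ has a lens space summand). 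Applying \cite[Lemma 9.6]{S1} and tracing the sphere $\boundary C$ back up the hierarchy then produces a separating sphere in $M$ that is essential by hypothesis~(1) ($H_2(M,\boundary M)\neq 0$), yielding $M = W_0 \# W_1$ with $W_0 \neq S^3$ and $W_1$ a rational homology ball of Heegaard genus at most $2$ carrying non-trivial torsion in $H_1$.

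The principal technical obstacles are: first, verifying in Case~B that the hierarchy can indeed be arranged to begin with the exceptional-class surface $S$ as an allowable $\beta$-taut decomposing surface; second, checking that the traversal count $\mu$ descends intact to each edge of $\beta_n\cap C$, despite the boundary compressions and isotopies of $Q$ sometimes required to keep the parameterizing surface valid under decomposition; and, most delicately, converting outcome~(3) --- a statement about the isolated terminal component $C$ --- into the desired connect-sum decomposition of $M$ itself by tracking $\boundary C$ through the inverse of each decomposition $S_i$.
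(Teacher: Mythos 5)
Your proof follows the same overall architecture as the paper's: seed the hierarchy with a $\beta$-taut but non-$\nil$-taut representative $S$ of the exceptional class (or use $S=\nil$ if $(M,\gamma,\nil)$ is already not $\nil$-taut), extend via Theorem~\ref{Theorem Down}, use Theorem~\ref{Theorem Up} to push non-$\nil$-tautness to the terminal stage, use Theorem~\ref{Index Decr.} to compare indices, and apply Theorem~\ref{Combinatorics} to a terminal component. Your selection of the component $C$ as one that is not $\nil$-taut, and the short argument that this forces $\beta_n\cap C\neq\nil$, is in fact a slightly cleaner articulation than the paper's, which simply picks a component containing an arc of $\beta_n$.

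There is one genuine gap. Theorem~\ref{Combinatorics} is stated only for sutured manifolds in which $\beta$ is a \emph{union of arcs}. Your proof assumes this tacitly (``each edge of $\beta_n\cap C$ is a subarc of $\beta$''), but nothing in your argument rules out the possibility that the hierarchy never decomposes along a surface meeting $\beta$, so that $\beta_n$ is still the original knot. This is a live concern in the branch where $(M,\gamma,\nil)$ is not $\nil$-taut and no exceptional-class surface $S$ is used as the first decomposing surface. The paper closes this off explicitly: if $\beta_n$ is still the knot $\beta$, then either $N$ is reducible (a component of $\boundary M_n$ being a reducing sphere for $N$) or $H_2(M,\boundary M)=0$, both of which contradict the standing hypotheses; hence $\beta_n$ must be a union of arcs. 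You need this step.

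A secondary weakness is your justification that the terminal sphere $\boundary C$ yields a \emph{proper} connected summand of $M$. Invoking ``essential by $H_2(M,\boundary M)\neq 0$'' is too quick: the hypothesis does not by itself preclude $\boundary C$ bounding a $3$-ball on the side away from $C$. The paper's argument is that the complement of the terminal component in $M$ contains $\boundary M$ (when $\boundary M\neq\nil$) and the first decomposing surface $S$ (which, when $\boundary M=\nil$, is closed and non-separating in $M$), and therefore cannot be a $3$-ball. You should supply this reasoning rather than relying on hypothesis~(1) alone. You also do not record the orientation normalization on $\beta_n$ (all arcs running $R_-\to R_+$, after possibly reversing) needed before invoking Theorem~\ref{Combinatorics}, though this is cosmetic. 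The remaining ``technical obstacles'' you flag --- making the first decomposition $\beta$-taut, and persistence of the traversal count $\mu$ --- are correctly identified and are handled in the paper by the cited results of Scharlemann.
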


\begin{proof}
Let $\mu = |\boundary Q \cap b|$. Assume that $I(Q) < 2\mu$ and that there is no Gabai disc for $Q$ in $M$. 

If there is an exceptional class $\sigma$, let $S\subset M$ be a $\beta$-taut surface representing $\sigma$. Since $S$ is $\beta$-taut, it is not $\nil$-taut. Therefore, the sutured manifold decomposition
\[
(M,\gamma,\beta) \stackrel{S}{\to} (M',\gamma',\beta')
\]
given by $S$ is not $\nil$-taut \cite[Corollary 3.3]{S1}. By \cite[Theorem 2.6, Lemma 7.5]{S1} we may choose $S$ so that the decomposition is $\beta$-taut. If $(M,\gamma,\nil)$ is not taut, let $(M',\gamma',\beta') = (M,\gamma,\beta)$ and take $S = \nil$.

Let 
\[
(M',\gamma',\beta') \stackrel{S_1}{\to} (M_1, \gamma_1, \beta_1) \stackrel{S_2}{\to} \hdots \stackrel{S_n}{\to} (M_n,\gamma_n,\beta_n)
\]
be a $\beta'$-taut sutured manifold hierarchy of $(M',\gamma',\beta')$ respecting $Q'$ (Lemma \ref{Theorem Down}). The sequence
\[
\mc{H}: (M,\gamma,\beta) \stackrel{S}{\to} (M',\gamma',\beta') \stackrel{S_1}{\to} \hdots \stackrel{S_n}{\to} (M_n,\gamma_n,\beta_n)
\]
is then a $\beta$-taut sutured manifold hierarchy of $(M,\gamma,\beta)$ respecting $Q$. Since the first decomposition is not $\nil$-taut, $\mc{H}$ is not a $\nil$-taut sequence of sutured manifold decompositions. By Theorem \ref{Theorem Up}, $(M_n,\gamma_n, \nil)$ is not $\nil$-taut. Let $Q_n$ be the parameterizing surface in $M_n$ resulting from the decomposition of $Q$ \cite[Section 7]{S1}. By Theorem \ref{Index Decr.}, $I(Q) \geq I(Q_n)$. Since $\mc{H}$ is a $\beta$-taut hierarchy, $H_2(M_n, \boundary M_n) = 0$. This implies that $\boundary M_n$ is the union of spheres. Let $Q_n$ be the resulting parameterizing surface in $M_n$.

If $\beta_n$ is the knot $\beta$, then either $N$ is reducible (with a component of $\boundary M_n$ a reducing sphere) or $H_2(M,\boundary M) = 0$. Either possibility contradicts our initial hypotheses. Thus, $\beta_n$ is the union of arcs. The orientation of $\beta_n$ induced by the orientation of $\beta$ either makes each arc in $\beta_n$ run from $R_-(\gamma_n)$ to $R_+(\gamma_n)$ or makes each arc in $\beta_n$ run from $R_+(\gamma_n)$ to $R_-(\gamma_n)$. Without loss of generality, we may assume the former. Let $M'_n$ be a component of $M_n$ containing at least one arc of $\beta_n$. Let $Q'_n = Q_n \cap M'_n$ and $\gamma'_n = \gamma_n \cap M'_n$. By Theorem \ref{Combinatorics}, one of the following holds:
\begin{enumerate}
\item There is a Gabai disc for $Q'_n$ in $(M'_n,\gamma'_n,\beta'_n)$.
\item $I(Q'_n) \geq 2\mu$
\item $(M'_n,\gamma'_n)$ satisfies all of the following:
\begin{enumerate}
\item $|\gamma'| = 1$
\item $M'_n$ is a rational homology ball
\item $H_1(M'_n)$ has torsion
\item $M'_n$ has Heegaard genus at most 3
\item If $M'_n$ does not have a lens space summand, then $M'_n$ has Heegaard genus at most 2.
\end{enumerate}
\end{enumerate}

Since $\boundary M'_n$ is a sphere, it is a summand of $M$. Since $M - M'_n$ contains both $\boundary M$ and the first surface in the hierarchy (which if $\boundary M \neq \nil$ is closed and non-separating in $M$) the complement of $M'_n$ in $M$ is not a 3-ball. Thus $M'_n$ is a proper connect summand of $M$. Any Gabai disc for $Q'_n$ is also a Gabai disc for $Q$ and $I(Q) \geq I(Q'_n)$. And, thus, we have our theorem.
\end{proof}

\section{Gabai Discs}\label{Gabai Discs}

Motivated by Theorem \ref{Sutured Mfld Thm}, we seek to eliminate Gabai discs for the parameterizing surface $Q = \ob{Q} \cap N$ where $\ob{Q}$ is a surface in $M'$ transverse to $\alpha$. When $M$ and $M'$ are obtained from $N$ by Dehn-filling, it is well-known how to modify $\ob{Q}$, but for completeness we recap the basics here.  

Throughout this section, we assume that $M'$ is a compact, orientable 3-manifold containing a knot or arc $\alpha$ transverse to a  properly embedded orientable surface $\ob{Q}$.

\subsection{Rational homology cobordism}\label{rational homology cobord}
In this section we use Gabai discs to construct rational homology cobordisms between surfaces. Rational cobordisms are related to the notions of I-cobordism and J-cobordism introduced by Gabai and Scharlemann, respectively (see \cite[Appendix]{L2}), but we do not need those ideas here.

The following lemma will come in useful:
\begin{lemma}\label{Alg. Top.}
Assume that $\ob{Q}$ is a connected oriented surface transverse to $\alpha$. Let $\kappa$ is an arc component of $\alpha - \ob{Q}$ with endpoints on the same side of $\ob{Q}$. Suppose that when a tube $T$ is added to $\ob{Q}$ along $\kappa$ to produce a surface $\mathbf{H}$ there is a compressing disc $D$  for $\mathbf{H}$ outside $T$ such that $\boundary D$ crosses through $T$ exactly $q \geq 1$ times, always in the same direction. Let $\ob{R}$ be the result of compressing $\mathbf{H}$ using $D$. Then the following hold:
\begin{enumerate}
\item $|\ob{R} \cap \alpha| = |\ob{Q} \cap \alpha| - 2$
\item $\ob{R}$ is connected and $\ob{Q}$ and $\ob{R}$ have the same genus.
\item $\ob{Q}$ and $\ob{R}$ bound a 3-dimensional submanifold $W$ for which $\mathbf{H}$ is a Heegaard surface
\item $W$ is a rational cobordism. It is a product if and only if $q = 1$.
\item Unless $\ob{Q}$ is a sphere, $W$ is reducible if and only if it contains a lens space summand.
\item If $q \geq 2$ and if $\ob{Q}$ is a disc or sphere, then $W$ contains a lens space summand.
\item $\ob{Q}$ and $\ob{R}$ are incompressible in $W$.
\end{enumerate}
\end{lemma}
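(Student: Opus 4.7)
The plan is to realize $W$ as a cobordism carrying a natural Heegaard splitting along $\mathbf{H}$, then to read all seven assertions off that splitting. Let $W_1$ denote the region between $\ob{Q}$ and $\mathbf{H}$ in $M'$: it is the union of a collar of $\ob{Q}$ with the solid tube $\eta(\kappa)$, which is abstractly $\ob{Q} \times I$ with a single $1$-handle attached, hence a compression body with $\boundary_- W_1 = \ob{Q}$ and $\boundary_+ W_1 = \mathbf{H}$. Let $W_2$ be the region between $\mathbf{H}$ and $\ob{R}$: since $D$ lies outside $T$, it sits on the side of $\mathbf{H}$ opposite to the interior of $\eta(\kappa)$, and $W_2$ is $\mathbf{H} \times I$ with a $2$-handle attached along $\boundary D$, hence a compression body with $\boundary_+ W_2 = \mathbf{H}$ and $\boundary_- W_2 = \ob{R}$. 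Gluing along $\mathbf{H}$ produces $W = W_1 \cup_{\mathbf{H}} W_2 \subset M'$, which establishes (3).

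Next I would settle (1), (2), and (4) by direct analysis. For (1), tubing removes precisely the two endpoints of $\kappa$ from $\mathbf{H} \cap \alpha$, and $D$ may be taken disjoint from $\alpha$, so compressing along $D$ does not change the count. For (2), the core curve $c$ of the annular tube $T$ is non-separating on $\mathbf{H}$ because cutting $\mathbf{H}$ along $c$ restores $\ob{Q}$ with two open discs removed, which is still connected; since $\boundary D$ has algebraic intersection $\pm q \neq 0$ with $c$, the curve $\boundary D$ is itself non-separating, so compression preserves connectedness and lowers genus by exactly one, returning it to that of $\ob{Q}$. For (4), the handle decomposition of $W$ gives
\[
H_1(W;\Z) \cong \bigl(H_1(\ob{Q};\Z) \oplus \Z\cdot t\bigr)/\langle q t + c\rangle,
\]
where $t$ is the cocore loop of the $1$-handle and $c \in H_1(\ob{Q};\Z)$ is the class of the portion of $\boundary D$ lying on $\ob{Q}$. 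Over $\Q$ one solves $t = -c/q$, so $H_*(\ob{Q};\Q) \to H_*(W;\Q)$ is an isomorphism, and the symmetric computation for $\ob{R}$ gives the rational cobordism. When $q = 1$ the $1$-handle and $2$-handle form a canceling pair and $W \cong \ob{Q} \times I$; when $q \geq 2$, integral $H_1(W)$ picks up non-trivial $\Z/q$ torsion, forbidding a product structure.

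The remaining claims follow from the Heegaard structure. For (6), when $\ob{Q}$ is a disc or sphere both $W_1$ and $W_2$ degenerate essentially to solid tori, and after capping off spherical boundary components $\mathbf{H}$ realizes a standard genus-one Heegaard splitting whose two meridian systems meet exactly $q$ times on $\mathbf{H}$, producing an $L(q,\ast)$ summand of $W$. For (5), the direction ``lens space summand implies reducible'' is immediate. For the converse with $\ob{Q}$ non-spherical, I would mimic a Haken-style argument: position a reducing sphere transverse to $\mathbf{H}$, reduce its intersections with $\mathbf{H}$ by innermost-disc moves exploiting the compression body structure of $W_1$ and $W_2$, and extract from the result a lens space summand realizing the $\Z/q$ torsion produced in the homology computation above. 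For (7), $\ob{Q} = \boundary_- W_1$ and $\ob{R} = \boundary_- W_2$ are incompressible in their respective compression bodies by the definition of compression body; a hypothetical compressing disc for $\ob{Q}$ in $W$ may be isotoped transverse to $\mathbf{H}$ and, via standard innermost-disc and outermost-arc moves using the irreducibility of $W_2$, pushed entirely into $W_1$, contradicting incompressibility there, with $\ob{R}$ handled symmetrically. The main obstacle is the converse direction of (5): one must geometrically exhibit the $\Z/q$ torsion identified abstractly in (4) as an actual lens space connect summand, which requires careful combinatorial work with the reducing sphere's intersection pattern on $\mathbf{H}$.
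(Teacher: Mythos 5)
Your overall framework matches the paper's: set up $W = W_1 \cup_{\mathbf{H}} W_2$ as a union of two genus-one compression bodies meeting along the Heegaard surface $\mathbf{H}$, and read off the conclusions from that splitting. Conclusions (1), (2), (3), the rational-homology part of (4), and (6) are handled in essentially the paper's spirit, modulo some imprecision. However, there is a genuine error in the ``only if'' direction of (4).

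You assert that when $q \geq 2$ the group $H_1(W;\Z) \cong \bigl(H_1(\ob{Q};\Z) \oplus \Z t\bigr)/\langle qt + c\rangle$ ``picks up non-trivial $\Z/q$ torsion,'' and use this to rule out a product structure. This is false in general. Since $H_1(\ob{Q};\Z)$ is free, the torsion of the quotient is $\Z/d$ where $d = \gcd(c_1,\dots,c_{2g},q)$ for $c = \sum c_i x_i$; if any $c_i$ is coprime to $q$ the relation $qt + c$ is primitive and the quotient is torsion-free. So the homology computation by itself cannot distinguish products from non-products when $q \geq 2$, and the lemma's conclusion does not follow from it. The paper instead argues geometrically: if $W$ were a product then, after capping $\boundary\mathbf{H}$, Scharlemann--Thompson implies $\wihat{\mathbf{H}}$ is a stabilized splitting of $\Sigma \times I$; the unique compressing disc of $W_1$ is the tube meridian disc and the unique compressing disc of $W_2$ is $D$ (each compression body has a single $1$-handle), and these two have algebraic intersection $\pm q$, so they cannot form a stabilizing pair unless $q=1$. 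Your sketch of (5) also leans on the same spurious $\Z/q$-torsion, and you yourself flag (5) as the chief unresolved step; the Haken-style reduction you describe is the right idea, but without the stabilized-splitting input the lens space summand is not actually produced. You should replace the homological detection of non-product by the Scharlemann--Thompson / Casson--Gordon argument (as the paper does), which handles (4), (5), and (7) uniformly by controlling where a reducing or boundary-reducing sphere or disc can meet $\mathbf{H}$.
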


See Figure \ref{Fig:  ScharlCycle} for a depiction of the surfaces $\ob{Q}$, $\ob{R}$ and $\mathbf{H}$.

\begin{figure}[tbh]
\centering
\includegraphics[scale=0.4]{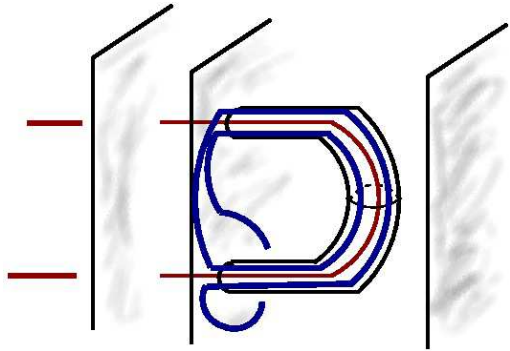}
\caption{The left surface is $\ob{Q}$; the right surface is $\ob{R}$; the middle surface is $\mathbf{H}$. The blue curve on $\mathbf{H}$ is $\boundary D$.}
 \label{Fig:  ScharlCycle}
\end{figure}

\begin{proof}
Begin by observing that $\ob{Q}$ can be constructed from $\ob{R}$ by attaching a 1-handle dual to $D$ and then attaching the 2-handle that is a meridian of $T$. The intersection number between the boundary of the 2-handle and the meridian of the 1-handle is equal to $q$. Thus, the picture is completely symmetric with respect to $\ob{Q}$ and $\ob{R}$.

By construction (1) and (3) are true, noting that if $\boundary \ob{Q} \neq \nil$ then $\mathbf{H}$ will be a Heegaard surface with boundary (also called a \defn{relative Heegaard surface} \cite{CM}). Since $\boundary D$ always passes through $T$ in the same direction, $\boundary D$ is non-separating on $\mathbf{H}$. Hence, $\ob{R}$ is connected. The genus of $\mathbf{H}$ is one greater than the genus of $\ob{Q}$ and so $\ob{Q}$ and $\ob{R}$ have equal genus. Thus, (2) holds.

Suppose that $W$ is a product. Let $\wihat{W}$ be the result of attaching 2-handles to $\boundary \mathbf{H}$ in $W$ and let $\wihat{\mathbf{H}} \subset \wihat{W}$ be the result of capping $\boundary \mathbf{H}$ off with discs. Since $\wihat{\mathbf{H}}$ is a Heegaard surface that separates the components of $\boundary \wihat{W}$ and since it has genus one greater than the genus of $\ob{Q}$, by \cite{ST} it is a stabilized Heegaard surface. Indeed, we note that the cocores $h$ of the 2-handles are vertical arcs in $\wihat{W}$ and that $\wihat{\mathbf{H}}$ intersects each of them in a single point. By \cite{HS-class}, $\wihat{\mathbf{H}}$ is a stabilized bridge splitting of $(\wihat{W}, h)$. Let $P$ be a sphere in $\wihat{W}$ intersecting $\wihat{\mathbf{H}}$ in a single essential loop, disjoint from $h$. Since $\mathbf{H}$ is obtained from $\ob{Q}$ by attaching a single 1-handle and since $\ob{R}$ is obtained from $\mathbf{H}$ by attaching a single 2-handle, the sphere $P$ can be isotoped (transversally to $\mathbf{H}$) so as to be disjoint from both the tube $T$ and the disc $D$. Inside $P$, the disc $D$ is a compressing disc for a once-punctured torus containing $D$. Since $P$ bounds a 3-ball, $\boundary D$ must pass through $T$ exactly once. Conversely, if $q = 1$, it is easy to see that $W$ is a product since $\wihat{H}$ is stabilized. Thus we have the second part of (3).

If $W$ is reducible, we apply the argument of the previous paragraph (using Hayashi and Shimokawa's version of Haken's lemma \cite[Theorem 1.3]{HS} in the case when $\boundary \ob{Q} \neq \nil$) to show that there is a sphere $P$ bounding a lens space summand of $W$. This is conclusion (4).

If $\ob{Q}$ is a disc or sphere, all arcs of $\boundary D \cap (\mathbf{H} - T)$ are parallel. Thus, $D$ is a compressing disc for a once-punctured torus and so, if $q \geq 2$ we have a lens space summand of $W$. This is conclusion (5).

Suppose that $\ob{Q}$ is compressible in $W$.  By \cite{CG} there is an essential disc $P$ in $W$ intersecting $\mathbf{H}$ in a single loop. (If $\boundary \ob{Q} \neq \nil$, attach 2-handles to $\boundary W$ as before and use \cite[Theorem 1.3]{HS}.) Let $P^*$ be the disc component of $P - \mathbf{H}$. We may isotope $P$ so that $P^*$ is disjoint from $D$. If $\boundary P^*$ separated $\mathbf{H}$, then it would bound a disc in the component of $M - \mathbf{H}$ containing $\ob{Q}$. This would contradict the fact that $\boundary P$ is essential on $\ob{Q}$. If $\boundary P^*$ does not separate $\mathbf{H}$, the disc $P^*$ must be parallel to $D$. But, then $P - P^*$ is an annulus extending $\boundary D$ to $\ob{Q}$, which contradicts the assumption that $\boundary D$ runs through the tube $T$. Thus, $\ob{Q}$ is incompressible in $W$. A dual argument shows that $\ob{H}$ is also incompressible in $W$. This is conclusion (6).

It remains only to show that $W$ is a rational homology cobordism. For simplicity, we prove this only in the case when $\boundary \ob{Q} = \nil$. The general case can be deduced from this one by attaching 2-handles to $\boundary \ob{Q}$ as above. For what follows, we take all homology groups to have $\Q$ coefficients.

Let $V_Q$ and $V_R$ be the closures of the components of $W - \mathbf{H}$ containing $\ob{Q}$ and $\ob{R}$ respectively. An easy argument with the Mayer-Vietoris sequence shows that the inclusion of $\ob{Q}$ into $W$ induces an isomorphism from $H_2(\ob{Q})$ to $H_2(W)$. We concentrate on showing that the inclusion also induces an isomorphism on first homology. Let $\{x_1, \hdots, x_{2g}\}$ be a basis for $H_1(\ob{Q})$ consisting of essential embedded oriented simple closed curves, each disjoint from the attaching discs for $T$. Let $\{m,l\}$ be an oriented meridian of $T$ and $l$ an oriented simple closed curve in $\mathbf{H}$ intersecting $m$ once and disjoint from the curves $x_1, \hdots, x_{2g}$. We may choose $l$ so that it has the same sign of intersection with $m$ as does $\boundary D$. The curve $\boundary D$ is homologous to $w = a_1x_1 + \hdots a_{2g}x_{2g} + pm + ql$ for some $a_1, \hdots, a_{2g}, p \in \Q$. Note that $q > 0$ is also the geometric intersection number between $\boundary D$ and $m$. 

The compressionbody $V$ deformation retracts to the union of $\ob{Q}$ with the cocore of the 1--handle bounded by $T$. We may take the images of $x_1, \hdots, x_{2g}$ and $l$ as a basis for $H_1(V)$, and we continue to refer to them as $x_1, \hdots, x_{2g}, l$. The 3-manifold $W$ deformation retracts to the union $C$ of $\ob{Q} \cup D$ with the core $\kappa$ of the 1-handle. We apply the Mayer-Vietoris Sequence to $C$ applied to $v = \ob{Q} \cup \kappa$ and $D$. We let $\delta$ be the image of $\boundary D$ in $v$. The Mayer-Vietoris sequence gives:
\[
0 \to H_1(\delta) \to H_1(v) \oplus H_1(D) \stackrel{\psi}{\to} H_1(C) \to 0.
\]
The kernel of $\psi$ is exactly the image of $w$ in $H_1(v)$, and so $H_1(C) = \langle x_1, \hdots, x_{2g}, l : a_1x_1 + \hdots a_{2g}x_{2g} + ql \rangle$. Since $q > 0$ and since we are using rational coefficients, the images of $x_1, \hdots, x_{2g}$ are a basis for $H_1(C)$, as desired.
\end{proof}

\begin{remark}
We will make use of the construction given in Lemma \ref{Alg. Top.} to rule out Scharlemann cycles in certain intersection graphs. These techniques are well known and have been applied in many papers. For example, the case when $\ob{Q}$ is a sphere or disc first arises in Scharlemann's seminal papers \cite{S-smooth, S-unknotting}. The case when $\ob{Q}$ is a torus is considered in \cite[Lemma 2.2]{BZ}, and elsewhere. 
\end{remark}

\subsection{Scharlemann cycles and rational homology cobordism}\label{Sec: Construction}

Recall that we are considering a compact, orientable, irreducible 3-manifold $N$ having oriented curves $a$ and $b$ in its boundary  intersecting minimally up to isotopy such that the geometric intersection number $\Delta$ between $a$ and $b$ is at least 1. If $a$ and $b$ lie on a torus component of $N$, we Dehn fill that component to obtain manifolds $M'$ and $M$ (respectively) containing knots $\alpha$ and $\beta$, respectively. If $a$ and $b$ lie on a component of $\boundary N$ of genus at least 2, we attach 2-handles along $a$ and $b$ to obtain $M'$ and $M$ (respectively) and embedded arcs $\alpha$ and $\beta$ (respectively).  Let $\ob{Q} \subset M'$ be a compact orientable surface transverse to $\alpha$ and let $Q = \ob{Q} \cap N$. Suppose that $\ob{P} \subset M$ is a compact, orientable surface transverse to $\beta$ and let $P = \ob{P} \cap N$. We may isotope $P$ and $Q$ relative to their boundaries so that they are transverse. The intersection between $P$ and $Q$ is a 1-manifold. We consider the boundary components of $P$ and $Q$ that are parallel to $b$ and $a$ (in $\boundary M$) as (fat) vertices in $\ob{P}$ and $\ob{Q}$ respectively and the components of $P \cap Q$ as edges (some of which may be loops disjoint from the vertices). These vertices and edges we consider as graphs $\Gamma_P$ and $\Gamma_Q$ in $\ob{P}$ and $\ob{Q}$ respectively. There is a vast amount of literature analyzing the structure and topological significance of these graphs (see \cite{GLith} to begin).

In $\boundary N$ choose an annular neighborhood $A$ of $b$ and observe that $\boundary Q \cap A$ is the union of spanning arcs, one for each component of $\boundary Q \cap b$. Let $\mu = |\boundary Q \cap b|$. If $\ob{Q}$ is a closed surface, then $\mu = \Delta|\boundary Q|$. Following the orientation of $b$ label the components of $\boundary Q \cap A$ by $\lambda_1, \hdots, \lambda_\mu$. Around each vertex $v$ of $\Gamma_P$ we see the labels $\lambda_1, \hdots, \lambda_\mu$ or $\lambda_\mu, \hdots, \lambda_1$, according to the sign of intersection of the corresponding point of $\beta \cap \ob{P}$.  We say that two vertices of $\Gamma_P$ are \defn{parallel} if they have the same sign, and \defn{anti-parallel} if they have opposite signs. Thus, for two parallel vertices the labels run in the same direction and for antiparallel vertices, the labels run in opposite directions. 

A \defn{$\lambda_i$-cycle} in $\Gamma_P$ is a cycle $\sigma$ that can be given an orientation where the tail end of each edge is labelled $\lambda_i$. The number $l$ is the \defn{length} of the cycle. A \defn{great} $\lambda_i$-cycle is a cycle where each vertex in the cycle is parallel and a \defn{Scharlemann cycle} is a great $\lambda_i$-cycle bounding a disc $E_\sigma$ in $\ob{P}$ with interior disjoint from $\Gamma_P$. In this section, we show how a Scharlemann cycle in $\ob{P}$ gives a way of constructing a rational homology cobordism.
\begin{lemma}\label{Sch cycle implies rational cobordism}
Assume that $\alpha$ is a knot. Suppose that $\sigma$ is a Scharlemann cycle in $\Gamma_P$. Then there is a rational homology cobordism from $\ob{Q}$ to a surface $\ob{R}$ constructed as in Lemma \ref{Alg. Top.}. The surface $\ob{R}$ intersects $\alpha$ in two fewer points than does $\ob{Q}$.
\end{lemma}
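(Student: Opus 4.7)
The plan is to build the arc $\kappa$ of $\alpha - \ob{Q}$ and the compressing disc $D$ required by the hypotheses of Lemma \ref{Alg. Top.}, then invoke that lemma. The essential combinatorial input is that the $l$ tail endpoints of the edges of $\sigma$ all lie on the single arc $\lambda_i \subset A$, and that the vertices of $\sigma$ are parallel in $\Gamma_P$.

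First, I locate the relevant vertex of $\Gamma_Q$. Since $\lambda_i$ is a sub-arc of a single component of $\boundary Q$ (an $a$-parallel curve), and that component bounds a meridional disc $u$ of the filling solid torus of $\alpha$ in $M'$, all $l$ tail endpoints of $\sigma$'s edges lie on $\boundary u \subset \ob{Q}$, where $u$ is a single vertex of $\Gamma_Q$ meeting $\alpha$ in a single point $u^\star$. I take $\kappa$ to be the arc of $\alpha - \ob{Q}$ having $u^\star$ as an endpoint on the side into which the rerouting below pushes, switching sides if necessary to ensure that the two endpoints of $\kappa$ lie on the same side of $\ob{Q}$.

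Second, I construct $D$ from $E_\sigma$ by a two-stage modification. The boundary of $E_\sigma \subset \ob{P}$ alternates between the edges $e_1,\ldots,e_l$ (arcs of $P \cap Q$, hence arcs in $Q \subset \ob{Q}$) and short arcs on the $\Gamma_P$-vertex discs $v_1,\ldots,v_l$, which sit inside the filling solid torus of $\beta$ in $M$ and therefore cannot be used directly in $M'$. I first push each vertex arc radially outward through its vertex disc onto $T = \boundary N$, obtaining a disc $E'_\sigma$ properly embedded in $N \subset M'$ whose boundary alternates between the $e_j$'s and short arcs on $T$ parallel to $b$. I then reroute each $T$-arc by replacing it with an arc running through the interior of the filling solid torus of $\alpha$, crossing $\alpha$ once as it transits a meridional disc (the tail-meridional disc $u$, together with the meridional disc(s) of the head labels). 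After smoothing, this produces a properly embedded disc $D \subset M'$ whose boundary lies on $\ob{Q} \cup (\text{tube along } \kappa)$ and crosses the tube exactly $q = l$ times.

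The direction-consistency needed by Lemma \ref{Alg. Top.} follows from the parallelism of the vertices of $\sigma$ in $\Gamma_P$: since the intersections of $\beta$ with $\ob{P}$ corresponding to $v_1,\ldots,v_l$ all have the same sign, the labels $\lambda_1,\ldots,\lambda_\mu$ cycle around each $v_k$ in the same rotational sense, so the $l$ rerouted arcs of $\boundary D$ can be arranged to cross $\alpha$ all with the same sign. Applying Lemma \ref{Alg. Top.} with this $\kappa$ and $D$ (and $q = l \geq 1$) then yields the rational homology cobordism $W$ from $\ob{Q}$ to a surface $\ob{R}$ with $|\ob{R} \cap \alpha| = |\ob{Q} \cap \alpha| - 2$, as required. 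The principal technical difficulty is the rerouting step when the head labels of $\sigma$ lie on a different component of $\boundary Q$ than $\lambda_i$: each $T$-arc must then pass through two different meridional discs, and some bookkeeping is required to check that the resulting $D$ is embedded and wraps the tube coherently. The combined hypotheses of uniform tail label and vertex parallelism on the Scharlemann cycle are exactly what make this possible.
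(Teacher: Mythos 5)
Your proposal follows the same overall strategy as the paper --- identify the arc $\kappa$ and the compressing disc $D$, then invoke Lemma \ref{Alg. Top.} --- and correctly pinpoints the key combinatorial input (the common tail label places all tail endpoints on one component of $\partial Q$, and parallelism of the vertices of $\sigma$ gives direction-consistency of the crossings). However, the construction of $D$ is needlessly circuitous, and this introduces some imprecision that would need repair. The paper's key observation is that $\partial E_\sigma$ \emph{already} lies on $Q$ (for the edges of $\sigma$) together with the band $B$, the component of $A - \partial Q$ between $\lambda_j$ and $\lambda_{j+1}$ (for the corner arcs along the fat vertices of $\Gamma_P$, whose boundaries are circles in $\partial N$ crossing $B$). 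Thickening $B$ into the $\alpha$-filling solid torus \emph{is} the tube along $\kappa$; so $E_\sigma$ itself, which lies in $N$, is a compressing disc for $\mathbf{H}$ outside the tube, with no radial pushing and no rerouting needed. Your two-stage modification (push the corner arcs to $\partial N$, then reroute through the interior of the $\alpha$-filling solid torus) produces, at best, a disc isotopic to $E_\sigma$, but at the cost of the embeddedness bookkeeping you flag, and the phrase ``crossing $\alpha$ once'' appears to be a slip --- the boundary of a compressing disc for $\mathbf{H}$ must cross the \emph{tube}, not $\alpha$, and $E_\sigma$ is in fact disjoint from $\alpha$ altogether. Two smaller points: the arc $\kappa$ is determined by the Scharlemann cycle (it is the sub-arc of $\alpha$ between the $j$th and $(j+1)$st meridian discs, corresponding to $B$), so one does not get to ``switch sides if necessary''; and the condition that the two endpoints of $\kappa$ lie on the same side of $\ob{Q}$ is not something to arrange but something to \emph{verify}, which the paper does by noting that an edge of $P\cap Q$ joins $\partial_j Q$ to $\partial_{j+1} Q$, forcing those two arcs to traverse $A$ in opposite directions by orientability of $P$ and $Q$.
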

\begin{proof}
Since the labels around each vertex of $\Gamma_P$ are oriented in the same direction, each edge of the Scharlemann cycle has labels $\lambda_{j}$ and $\lambda_{j+1}$ at its endpoints (for some $j$, with the indices running modulo $\mu$). Let $B$ be the closure of a component of $A - \boundary Q$ lying between the $j$th and $(j+1)$st components of $\boundary Q$. Name those components $\boundary_j Q$ and $\boundary_{j+1} Q$ respectively. (Note that if $K$ is an arc, then in $\boundary Q$, the arcs $\boundary_j Q$ and $\boundary_{j+1} Q$ may belong to the same component of $\boundary Q$.) Since there is an edge of $D \cap Q$ joining $\boundary_j Q$ to $\boundary_{j+1} Q$, those arcs pass through $A$ in opposite directions. (This uses the orientability of $P$ and $Q$.)

Thicken $B$ so that it is a 1-handle with attaching discs on $\ob{Q}$. Let $\mathbf{H}$ be the surface resulting from attaching the 1-handle to $\ob{Q}$ and pushing in the direction of the 1-handle to make it disjoint from $\ob{Q}$. The observation of the previous paragraph guarantees that the ends of the 1-handle are attached on the same side of $\ob{Q}$, and so $\mathbf{H}$ is orientable. The disc $D = E_\sigma$ is a compressing disc for $\mathbf{H}$ that lies outside the region between $\ob{Q}$ and $\mathbf{H}$. Let $\wihat{R}$ be the result of compressing $\mathbf{H}$ using $E_\sigma$ and then isotoping it off $\mathbf{H}$ away from $\ob{Q}$. By Lemma \ref{Alg. Top.}, the region $W$ between $\ob{Q}$ and $\ob{R}$ is a rational homology cobordism and $\ob{R}$ intersects $\alpha$ in two fewer points than does $\ob{Q}$.
\end{proof}
\begin{remark}
We need to assume that $\alpha$ is a knot for this lemma, for if $\alpha$ were an arc, there is no guarantee that the 2-handle produced by the Scharlemann cycle is disjoint from $\boundary M'$. The paper \cite{T1} examines that possibility in greater detail.
\end{remark}

\subsection{Using Gabai discs to find Scharlemann cycles}
Continue using the notation of the previous section, but now suppose that $\ob{P}$ is a Gabai disc for $Q$. For simplicity, let $D = \ob{P}$ and $G = \Gamma_P$. Our exposition is closely modelled on that in \cite[Section 2.5]{CGLS}. We allow $\alpha$ to be an arc, as before.

\begin{lemma}\label{Sch cycles exist}
There is a Scharlemann cycle in $G$.
\end{lemma}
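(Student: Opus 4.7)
Plan: The argument relies on two features of the Gabai disc $D=\ob{P}$: all $q=|\beta\cap D|$ intersection points have the same sign, so every vertex of $G=\Gamma_P$ is parallel to every other and the labels $\lambda_1,\dots,\lambda_\mu$ appear in the same cyclic order around every vertex; and, by the defining inequality of a Gabai disc, $|Q\cap\partial D|<\mu$.

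I would first use these two ingredients to locate a ``good'' label. Each vertex carries exactly one endpoint of each label, so each label has exactly $q$ vertex-endpoints; an edge of $G$ meeting $\partial D$ uses up one such endpoint of exactly one color. Since there are at most $|Q\cap\partial D|<\mu$ such edges and $\mu$ possible colors, by pigeonhole some label $\lambda_i$ has the property that every edge of $G$ incident to a $\lambda_i$-endpoint has both of its endpoints at vertices of $G$. Orienting each such edge so that the $\lambda_i$-end is its tail (an edge with two $\lambda_i$-ends contributes two oppositely oriented directed edges) produces a digraph $G_i^+$ on the vertex set of $G$ with out-degree exactly $1$ at every vertex. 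This finite functional digraph must contain a directed cycle, and a simple sub-cycle, read in $G$, is a simple $\lambda_i$-cycle---automatically great, because all vertices are parallel.

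I would then upgrade this first $\lambda_i$-cycle to a Scharlemann cycle by an innermost-disc argument. Among all simple great $\lambda_i$-cycles in $G$, pick $\sigma$ whose bounded disc $\Delta\subset D$ is innermost, first minimising the number of vertices of $G$ in $\mathrm{int}(\Delta)$ and secondarily the number of edges. If $v\in\mathrm{int}(\Delta)$ were a vertex of $G$, its outgoing $G_i^+$-edge, being an arc in $D$ that meets other edges of $G$ only at shared vertices, cannot cross $\sigma$ transversally and therefore ends at a vertex in $\overline{\Delta}$. Iterating produces a path in $G_i^+$ confined to $\overline{\Delta}$; by finiteness either this path closes up inside $\mathrm{int}(\Delta)$---yielding a simple $\lambda_i$-cycle strictly inside $\Delta$, contradicting minimality---or it first meets $\sigma$ at a vertex $u$. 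In the latter case, I would splice a sub-arc of the path with a portion of $\sigma$, using the parallelism of the vertices to verify that every edge of the resulting closed loop still carries $\lambda_i$ at its tail, producing a smaller simple $\lambda_i$-cycle and again contradicting minimality. An analogous chord-splicing argument rules out interior edges of $G$ in $\mathrm{int}(\Delta)$ once interior vertices have been removed.

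The main obstacle is precisely this splice-and-reduce step: checking that, after splicing a sub-arc of a $G_i^+$-path (or a non-$\lambda_i$ chord) into $\sigma$ at a vertex $u$, the new closed loop is still a simple $\lambda_i$-cycle at $u$. This is where the parallelism of the vertices---the identical cyclic order of labels at every vertex---is essential, and it is the combinatorial heart of the Scharlemann-cycle existence argument, modelled on that in \cite[Section 2.5]{CGLS}.
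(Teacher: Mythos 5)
Your first half---pigeonholing the $|Q\cap\partial D|<\mu$ boundary edges against the $\mu$ labels to find a good label $\lambda_i$, then using the parallelism of the vertices to extract a $\lambda_i$-cycle from the out-degree-one digraph $G_i^+$---is exactly the paper's Observations 1 and 2, just stated more explicitly. The divergence, and a genuine gap, is in the passage from a $\lambda_i$-cycle to a Scharlemann cycle, precisely at the ``splice-and-reduce'' step that you yourself flag as the main obstacle. The splice cannot actually be performed: if $v\in\mathrm{int}(\Delta)$ and the $G_i^+$-path from $v$ first meets $\sigma$ at a vertex $u$, that path is a tail hanging off $\sigma$, not a closed curve; the forward orbit never returns to $v$, and $v$ is off $\sigma$, so no sub-arc of the path together with a sub-arc of $\sigma$ closes up into a loop. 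Structurally, within $\overline{\Delta}$ every vertex has out-degree one and every interior vertex's forward orbit terminates on $\sigma$, so after your innermost choice $\sigma$ is the \emph{only} $G_i^+$-cycle in $\overline{\Delta}$; no rearrangement of $\lambda_i$-edges produces a smaller simple $\lambda_i$-cycle. The ``non-$\lambda_i$ chord'' variant fails for the same reason---a spliced loop using a non-$\lambda_i$ chord is no longer a $\lambda_i$-cycle.

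The paper closes this gap by a label-switching trick, which is the one idea you are missing. It takes $\sigma$ to be an innermost $\lambda_j$-cycle over \emph{all} labels $j$ simultaneously (with a tie-break on the number of edges interior to the bounded disc $E$), not an innermost cycle for one fixed label. If such a $\sigma$ is not a Scharlemann cycle, parallelism forces one of the adjacent labels $\lambda_{j\pm1}$ to point into $E$ at every $\sigma$-vertex, and running the Observation-2 existence argument again inside a slight enlargement $E'$ of $E$, with $\lambda_{j+1}$ in place of $\lambda_i$, produces a $\lambda_{j+1}$-cycle lying in $E$ and distinct from $\sigma$, contradicting the innermost choice. The freedom to change the label at each reduction step is exactly what lets the innermost induction terminate; insisting on a single fixed label is what makes the reduction impossible. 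You correctly located the combinatorial heart of the CGLS-style argument, but the resolution requires the label to vary, not a splice.
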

\begin{proof}
Consider $G = \eta(K \cap D) \cup (\boundary Q \cap D)$ as a graph in $D$ (with fat vertices). Each edge of $G$ has a label at each endpoint that lies in the vertices of $G$. Some edges may have both endpoints on $\boundary D$ and some edges may have a single endpoint in the vertices of $G$ and a single endpoint on $\boundary D$. 

Each boundary component of $Q$ inherits an orientation from $Q$ and so each intersection point of $\boundary Q$ with a meridian of $K$ receives a sign.  Two intersection points have opposite signs if and only if the corresponding arcs of $\boundary Q \cap A$ traverse $A$ in opposite directions. An arc of intersection $D \cap Q$ has endpoints of opposite sign. Consequently

\textbf{Observation 1:} No edge of $G$ has endpoints with the same label. 

Since the number of boundary edges is strictly less than $\mu$, there exists a label $\lambda_i$ such that no boundary edge of $G$ has label $\lambda_i$ at its non-boundary endpoint. Since there are only a finite number of vertices, since they all have the same labelling scheme, and since no edge has the same label on both endpoints, we have:

\textbf{Observation 2:} There exists a $\lambda_i$ cycle in $G$.

Let $\sigma$ be an innermost such cycle. That is, $\sigma$ is a $\lambda_j$ cycle for some $j$ and $\sigma$ bounds a disc $E \subset D$ whose interior does not contain a $\lambda_k$ cycle for any $k$. Furthermore, choose $\sigma$ so that out of all such cycles, the interior of $E$ contains the fewest number of edges. 

If $E$ has the property that any edge of $G \cap E$ with an endpoint in a vertex of $\sigma$ lies in $\sigma$, then either the interior of $E$ is disjoint from $G$ (in which case, $\sigma$ is a Scharlemann cycle) or the disc obtained by removing a collar neighborhood of $\boundary E$ from $E$ is a Gabai disc for $Q$. If that were the case, then by Observation 2, we would have contradicted the minimality of $\sigma$.  Thus,

\textbf{Observation 3:} Either $\sigma$ is a Scharlemann cycle or some edge of $(G \cap E) - \sigma$ has an endpoint on a vertex of $\sigma$.

Assume that $\sigma$ is not a Scharlemann cycle. Since all the labels around vertices of $G$ run in the same direction, for each vertex $v \in \sigma$, either the edge $e(v,j-1)$ with label $\lambda_{j-1}$ or the edge $e(v,j+1)$ with label $\lambda_{j+1}$ lies in $E$ (indices run mod $(\mu+1)$).  Without loss of generality we may assume that it is $e(v,j+1)$. By Observation 3, there exists a vertex $v \in \sigma$, such $e(v,j+1)$ does not lie in $\sigma$. 

Consider the disc $E'$ obtained by slighly enlarging $E$ so that $\sigma$ is on the interior of $E'$. Since no edge of $G \cap E'$ with endpoint at a vertex of $G \cap E'$ has label $\lambda_{j+1}$ at the vertex, the reasoning that led us to Observation 2 shows that there exists a $\lambda_{j+1}$-cycle $\sigma'$ in $E'$. Since the edges of $G \cap E'$ completely contained in $E'$ are, in fact, contained in $E$. Since $e(v,j+1)$ does not lie in $\sigma$, the cycle $\sigma'$ is not equal to $\sigma$ and we have contradicted the minimality of $\sigma$.
\end{proof}

As a corollary, we obtain:

\begin{lemma}\label{Rationally Essential}
Suppose that $a$ and $b$ lie on a torus component of $\boundary N$ and that $N$ is hyperbolic and does not contain an essential surface of genus $g$. If $M'$ contains an essential surface of genus $g$, then either there is such a surface $\ob{Q}$ (in the same rational cobordism class) rationally cobordant to an inessential surface $\ob{R}$ intersecting $\alpha$ in two fewer points, or there is such a surface $\ob{Q}$ (in the same rational cobordism class) such that $Q = \ob{Q} \cap N$ is essential in $N$ and does not have a Gabai disc in $M$. Furthermore, in the former case $\ob{Q}$ is rationally inessential.
\end{lemma}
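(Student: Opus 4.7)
The plan is to choose $\ob{Q}$ among essential surfaces of genus $g$ in the given rational cobordism class so as to minimize its intersection with $\alpha$, and then dichotomize on whether $Q = \ob{Q}\cap N$ admits a Gabai disc in $M$.

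First, I would fix a rational cobordism class represented by an essential surface of genus $g$ in $M'$ and choose $\ob{Q}$ to be an essential genus-$g$ representative minimizing $|\ob{Q}\cap\alpha|$. A standard minimality argument then ensures that $Q = \ob{Q}\cap N$ is essential in $N$: if $D$ were a compressing disc for $Q$ in $N$, then $\boundary D$ would be inessential on $\ob{Q}$ (since $\ob{Q}$ is incompressible in $M'$), bounding a disc $D'\subset\ob{Q}$ that must contain at least one point of $\alpha$ (else $\boundary D$ would already bound a disc in $Q$, contradicting the choice of $D$). Using that $N$ is hyperbolic, so that $M'$ is irreducible outside of narrow exceptional cases, the sphere $D\cup D'$ bounds a ball in $M'$; isotoping $\ob{Q}$ across this ball strictly decreases $|\ob{Q}\cap\alpha|$, contradicting minimality. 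A parallel argument, minimizing lexicographically against a secondary complexity measure such as $|\boundary Q|$, handles $\boundary$-compressibility and $\boundary$-parallelism.

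If there is no Gabai disc for $Q$ in $M$, then $\ob{Q}$ directly satisfies the second alternative. Otherwise, let $D$ be a Gabai disc for $Q$ in $M$. Since $a$ and $b$ lie on a torus component, $\alpha$ is a knot, so the hypotheses of Lemma \ref{Sch cycle implies rational cobordism} are in force. By Lemma \ref{Sch cycles exist}, the graph $\Gamma_D$ contains a Scharlemann cycle $\sigma$, and by Lemma \ref{Sch cycle implies rational cobordism} together with Lemma \ref{Alg. Top.}(2), $\sigma$ produces a rational cobordism from $\ob{Q}$ to a surface $\ob{R}$ of the same genus $g$ with $|\ob{R}\cap\alpha| = |\ob{Q}\cap\alpha| - 2$. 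Thus $\ob{R}$ lies in the same rational cobordism class as $\ob{Q}$ with strictly fewer intersections with $\alpha$; by minimality of $|\ob{Q}\cap\alpha|$, $\ob{R}$ cannot be essential and so must be inessential. This is the first alternative, and the ``Furthermore'' clause follows immediately from the definition of rationally inessential, since $\ob{Q}$ is rationally cobordant to the inessential surface $\ob{R}$.

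I expect the main obstacle to be the first step, the arrangement that $Q$ is essential in $N$. This requires a careful minimality argument across isotopies together with the correct use of $N$ being hyperbolic to guarantee that the ambient sphere $D\cup D'$ actually bounds a ball in $M'$. Once that is in hand, the remainder is a direct application of the Scharlemann cycle machinery developed in Section \ref{Sec: Construction}.
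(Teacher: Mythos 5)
Your plan matches the paper's: minimize $|\ob{Q}\cap\alpha|$ among essential genus-$g$ surfaces in the fixed rational cobordism class, show $Q=\ob{Q}\cap N$ is essential in $N$, and if a Gabai disc exists apply Lemmas \ref{Sch cycles exist} and \ref{Sch cycle implies rational cobordism} to produce a same-genus $\ob{R}$ rationally cobordant to $\ob{Q}$ meeting $\alpha$ two fewer times, which by minimality must be inessential; the ``Furthermore'' clause is then just the definition. The gap is in showing $Q$ is essential in $N$.

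Your proposal for the boundary-parallel case---``minimizing lexicographically against a secondary complexity such as $|\boundary Q|$''---does not work: rationally cobordant surfaces share the same $\boundary\ob{Q}$, so $|\boundary Q| = |\boundary\ob{Q}| + |\ob{Q}\cap\alpha|$ is determined by $|\ob{Q}\cap\alpha|$ within the cobordism class and carries no new information; moreover, a boundary-parallel $Q$ admits no further ``reduction'' for a minimality argument to penalize. The paper's argument is a different, direct one: if $Q$ were boundary-parallel in $N$, the parallelism must be into the torus $T$ carrying $a$ and $b$ (parallelism into any other component of $\boundary N$ would make $\ob{Q}$ itself boundary-parallel in $M'$); since $Q$ is incompressible and $\boundary Q\cap T$ consists of curves parallel to $a$, $Q$ must then be a boundary-parallel annulus, which forces $\ob{Q}$ to be an inessential $2$-sphere, contradicting the choice of $\ob{Q}$. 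You also need not treat $\boundary$-compressibility of $Q$ as a separate case: the notion of essential used for $Q$ here is incompressible and not boundary-parallel, which is all the downstream sutured manifold and Gabai-disc machinery requires.
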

\begin{proof}
Let $\ob{Q} \subset M'$ be an essential surface of genus $g$ chosen so that out of all such surfaces in a given rational cobordism class, $\ob{Q}$ intersects $\alpha$ minimally. By hypothesis, this number is non-zero.

If $Q$ is inessential it is either compressible or boundary parallel. A compressing disc for $Q$ must have inessential boundary on $\ob{Q}$. Since $M'$ is irreducible, we may isotope $\ob{Q}$ through the 3-ball bounded by those two discs to a surface $\ob{R}$ intersecting $\alpha$ fewer times. This contradicts our choice of $\ob{Q}$.  If $Q$ is boundary parallel, it must be parallel to the torus component $T$ of $\boundary N$ containing $a$ and $b$. Since $Q$ is incompressible, it must be a boundary parallel annulus or disc. Since $\boundary Q \cap T$ is parallel to $a$, $Q$ cannot be a disc, so $Q$ is an annulus. Since $Q$ is a boundary-parallel annulus, $\ob{Q}$ is an inessential 2-sphere, contradicting our choice of $\ob{Q}$.

If there were a Gabai disc for $Q$ in $M$, then by Lemma \ref{Sch cycles exist}, there is a Scharlemann cycle in the graph $G$ in the Gabai disc. By Lemma \ref{Sch cycle implies rational cobordism}, $\ob{Q}$ is rationally cobordant to a surface $\ob{R}$ intersecting $\alpha$ in exactly two fewer points. The surface $\ob{R}$ must be inessential as otherwise we would have contradicted our choice of $\ob{Q}$. 
\end{proof}

\begin{corollary}\label{Sphere/Torus}
Suppose that $a$ and $b$ lie on a torus component of $\boundary N$ and that $N$ is hyperbolic. Then the following hold:
\begin{itemize} 
\item If $M'$ has an essential disc, then there is one $\ob{Q}$ such that $Q = \ob{Q} \cap N$ is essential in $N$ and does not have a Gabai disc in $M$.
\item If $M'$ has an essential sphere, then either there is one $\ob{Q}$ such that $Q = \ob{Q} \cap N$ is essential in $N$ and does not have a Gabai disc in $M$ or there is an essential sphere bounding a lens space summand of $M'$.
\item If $M'$ is irreducible and has an essential torus, then either there is one $\ob{Q}$ such that $Q = \ob{Q} \cap N$ is essential in $N$ and does not have a Gabai disc in $M$ or there is an essential torus in $M'$ bounding (possibly with a component of $\boundary M'$) a submanifold of Heegaard genus 2.
\item If $M'$ is irreducible, atoroidal, and has an essential annulus, then either there is one $\ob{Q}$ such that $Q = \ob{Q} \cap N$ is essential in $N$ and does not have a Gabai disc in $M$ or $\boundary M'$ is a single torus and $M'$ has Heegaard genus 2.
\end{itemize}
\end{corollary}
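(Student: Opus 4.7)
The plan is to treat all four bullets uniformly via Lemma \ref{Rationally Essential}. For each bullet, choose an essential surface $\ob{Q} \subset M'$ of the prescribed type (disc, sphere, torus, or annulus) which minimises $|\ob{Q} \cap \alpha|$ within its rational cobordism class. Since $N$ is hyperbolic, $N$ contains no essential disc, sphere, annulus, or torus, so Lemma \ref{Rationally Essential} applies (with $g = 0$ in the first three bullets and $g = 1$ in the torus bullet). Of its two alternatives, one is exactly the desired conclusion (that $Q = \ob{Q} \cap N$ is essential in $N$ and admits no Gabai disc in $M$), so the task reduces to ruling out, or extracting the auxiliary conclusions from, the other alternative: that $\ob{Q}$ is rationally cobordant to an inessential surface $\ob{R}$ of the same genus and boundary as $\ob{Q}$ and with $|\ob{R} \cap \alpha| = |\ob{Q} \cap \alpha| - 2$, via a submanifold $W$ satisfying the conclusions of Lemma \ref{Alg. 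Top.}; the relative Heegaard surface $\mathbf{H}$ of $W$ has genus one greater than $\ob{Q}$.

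Common to every bullet, if the tube-crossing number $q$ of Lemma \ref{Alg. Top.} is $1$ then $W$ is a product, so $\ob{R}$ is isotopic to $\ob{Q}$, making $\ob{R}$ essential and contradicting the bad alternative; thus $q \geq 2$. The disc bullet is then immediate: the inessential disc $\ob{R}$ shares its boundary with $\ob{Q}$, so $\boundary \ob{Q}$ bounds a disc on $\boundary M'$, contradicting essentiality of $\ob{Q}$, and we are forced into the desired alternative. For the sphere bullet, Lemma \ref{Alg. Top.}(5) gives $W$ a lens space summand, which sits inside $M'$ and so produces a lens space summand of $M'$ bounded by an essential sphere.

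For the torus bullet, Lemma \ref{Alg. Top.}(6) says $\ob{R}$ is incompressible in $W$, so inessentiality of $\ob{R}$ in the irreducible $M'$ must be witnessed on the side of $\ob{R}$ opposite $\ob{Q}$. Irreducibility of $M'$ then forces $\ob{R}$ to either bound a solid torus on that side or be parallel to a torus component $T'$ of $\boundary M'$. Attaching this solid torus, respectively the product region $T' \times I$, to $W$ along $\ob{R}$ produces a submanifold of $M'$ bounded by $\ob{Q}$ (respectively by $\ob{Q} \cup T'$). The genus-$2$ relative Heegaard surface $\mathbf{H}$ of $W$ extends across the solid torus or product region, so the enlarged submanifold has Heegaard genus at most $2$, matching the stated conclusion.

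The annulus bullet is the subtlest and presents the main obstacle. First, a compressing disc for $\ob{R}$ would have boundary a core curve of the annulus $\ob{R}$; together with a half of $\ob{R}$ it would yield a disc in $M'$ bounded by a component of $\boundary \ob{Q}$, and using irreducibility of $M'$ and essentiality of $\ob{Q}$ one derives a contradiction. Hence $\ob{R}$ is boundary parallel to an annulus $A' \subset \boundary M'$ via a product region. Then $\ob{Q} \cup A'$ is a closed torus in the atoroidal $M'$, and so it must either bound a solid torus or be parallel to a torus component of $\boundary M'$. Adjoining to $W$ the product region between $\ob{R}$ and $A'$, together with this solid torus or parallel product region, yields a submanifold of Heegaard genus at most $2$ containing $\ob{Q}$. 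The expected main obstacle is the concluding step: showing that this submanifold exhausts $M'$, and that consequently $\boundary M'$ is a single torus. I anticipate this will require carefully tracking boundary components through the assembly and jointly exploiting the essentiality of $\ob{Q}$, the atoroidality of $M'$, and the irreducibility of $M'$ to preclude any extraneous boundary components of $M'$.
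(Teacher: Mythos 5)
Your arguments for the disc, sphere, and torus bullets essentially reproduce the paper's proof (modulo small details you gloss, e.g., why attaching a solid torus to the $\ob{R}$-side compressionbody yields a genus-2 handlebody, and an off-by-one in citing the parts of Lemma \ref{Alg. Top.}: the lens space statement is part (6), incompressibility is part (7)). The annulus bullet is where you diverge and where you acknowledge a gap, so let me concentrate there.

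Your route --- first argue $\ob{R}$ is boundary-parallel, then form the torus $\ob{Q}\cup A'$ --- has two problems. First, your dismissal of the case where $\ob{R}$ is compressible in $M'-W$ is not actually established: surgering $\ob{R}$ gives two discs whose boundaries are $\boundary\ob{Q}$, but ``$\boundary\ob{Q}$ bounds a disc in $M'$'' does not directly contradict essentiality of the annulus $\ob{Q}$. Second, and more importantly, the step you call the ``main obstacle'' is where the argument actually lives. The paper sidesteps both issues by working directly with the torus $\ob{P}=\ob{Q}\cup\ob{R}$, pushed slightly off $\boundary M'$, which is precisely $\boundary W$. Atoroidality forces $\ob{P}$ to be inessential. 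It cannot be compressible: a compression on the $W$ side (using irreducibility of $W$, which holds since a reducing sphere for $W$ would give a lens space summand of the irreducible $M'$) would make $W$ a solid torus and hence $\ob{Q}$ boundary-parallel to $\ob{R}$ inside $W$, so boundary-parallel in $M'$; a compression on the other side would make $\ob{P}$ bound a solid torus there, so $M'=W\cup(\text{solid torus})$ would be closed, contradicting $\boundary\ob{Q}\subset\boundary M'\ne\nil$. So $\ob{P}$ is parallel to some torus component $T'$ of $\boundary M'$, and the parallelism region must lie outside $W$ since $\boundary W=\ob{P}$ is a single torus, not two. Now the obstacle you anticipated evaporates: $\ob{P}$ separates $M'$ into $W$ and $M'-\inter{W}$, and the parallelism region $T'\times I$ is a connected piece of $M'-\inter{W}$ with the same boundary $\ob{P}$; since there is nothing beyond $T'\subset\boundary M'$, the parallelism region equals $M'-\inter{W}$. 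Therefore $\boundary M'=T'$ is a single torus and $M'\cong W$ has Heegaard genus $2$. The observation you were missing is simply that $\ob{P}$ is the \emph{entire} boundary of $W$, so once $\ob{P}$ is boundary-parallel, the parallelism region is forced to exhaust the complementary side, with no room for extra boundary components.
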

\begin{proof}
The proof is a continuation of the proof of Lemma \ref{Rationally Essential}.  If $Q$ is essential in $N$ and if there are no Gabai discs for $Q$ in $M$ we are done, so suppose that the surface $\ob{R}$ is rationally cobordant to $\ob{Q}$ and is inessential in $M'$. Since $\ob{Q}$ is incompressible, by Lemma \ref{Alg. Top}, no compressing disc for $\ob{R}$ can intersect the cobordism $W$ between $\ob{Q}$ and $\ob{R}$. Similarly, if $\ob{R}$ is boundary-parallel, $\ob{Q}$ lies outside the region of parallelism and if $\ob{R}$ is a sphere bounding a 3-ball, $\ob{Q}$ does not lie in the 3-ball.

If $\ob{Q}$ is a disc, then $\ob{R}$ is a boundary-parallel disc. Thus, $\boundary \ob{Q} = \boundary \ob{R}$ bounds a disc in $\boundary M'$. This contradicts the choice of $\ob{Q}$ to be a compressing disc for $\boundary M'$.

If $\ob{Q}$ is a sphere it bounds a lens space summand of $M'$ since $W$ is a punctured lens space. 

If $\ob{Q}$ is a torus, it lies outside the solid torus or the parallelism with $\boundary M'$ bounded by $\ob{R}$ and so it bounds (possibly with a component of $\boundary M'$) a submanifold of $M'$ of Heegaard genus 2.

If $\ob{Q}$ and $\ob{R}$ are annuli, their union is a torus $\ob{P}$ bounding a submanifold $W$ of $M'$ of Heegaard genus 2. Since $M'$ is atoroidal, $\ob{P}$ is inessential. Since $M'$ is irreducible, $\ob{P}$ is parallel to $\boundary M'$. Since $\boundary W = \ob{P}$, $\boundary M'$ is connected and $M'$ has Heegaard genus 2.
\end{proof}

\section{Applications}\label{Apps}
As in the introduction, let $N$ be a compact, connected, orientable 3-manifold and $T \subset \boundary N$ a torus component. Let $a$ and $b$ be slopes on $T$ with $\Delta \geq 1$. Let $(M',\alpha)$ and $(M,\beta)$ be the result of filling $T$ according to slopes $a$ and $b$ respectively, with $\alpha$ and $\beta$ the cores of the filling tori. We make the following assumptions:
\begin{enumerate}
\item[(A)] $M'$ does not have a lens space proper summand
\item[(B)] $M$ does not have a proper summand that is a rational homology ball of Heegaard genus at most 2 with torsion in first homology. 
\item[(C)] $H_2(M,\boundary M) \neq 0$.
\item[(D)] $N$ is irreducible and boundary-irreducible
\end{enumerate}

The case when both $M$ and $M'$ are reducible or boundary-reducible is thoroughly explored by Scharlemann \cite{S2}, so we concentrate on other types of exceptional fillings.

\begin{theorem}[The rationally essential theorem]\label{Main Thm 1}
Assume (A) - (D) and also that $M$ is reducible or that $(M,\beta)$ has an exceptional class. Suppose that $M'$ contains a rationally essential closed surface $\ob{Q}$ of genus $g$ and that no such surface rationally cobordant to $\ob{Q}$ is disjoint from $\alpha$. Then:
\[
(\Delta - 1)|\ob{Q} \cap \alpha| \leq -\chi(\ob{Q})
\]
\end{theorem}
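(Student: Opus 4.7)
The strategy is to convert the hypotheses into a direct application of Theorem \ref{Sutured Mfld Thm} and to rule out the Gabai-disc alternative using the Scharlemann-cycle construction of Section \ref{Sec: Construction}. First I would equip $N$ with a $\nil$-taut sutured structure $(N,\wihat{\gamma},\nil)$ in which the filled torus $T$ is a component of $T(\wihat{\gamma})$; since $N$ is irreducible and boundary-irreducible by (D), Propositions \ref{Scharl-sutures} and \ref{Lack-sutures} provide such a structure and let me arrange $\wihat{\gamma}\neq\nil$ in case $M$ would otherwise be a solid torus with no sutures. Letting $\beta$ be the core of the $b$-filling, the sutured manifold $(M,\gamma,\beta)$ with $\gamma=\wihat{\gamma}$ is $\beta$-taut. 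I then choose $\ob{Q}$ so as to minimize $|\ob{Q}\cap\alpha|$ within its rational cobordism class; the assumption that no rationally cobordant surface of genus $g$ is disjoint from $\alpha$ makes this minimum positive. Setting $Q=\ob{Q}\cap N$ and using that $\beta$ is a knot (so the $\mu$-term in the sutured-manifold index vanishes), one has
\[
I(Q) = -2\chi(Q) = -2\chi(\ob{Q})+2|\ob{Q}\cap\alpha|,\qquad |\boundary Q\cap b| = \Delta\,|\ob{Q}\cap\alpha|.
\]

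Next I would invoke Theorem \ref{Sutured Mfld Thm}. Its hypothesis (1) is (C); (2) is the construction; (3) is arranged by the sutured-structure choice; (4) holds because either $M$ reducible forces $(M,\gamma,\nil)$ not to be $\nil$-taut, or $(M,\beta)$ has an exceptional class by the standing assumption; and (5) follows from $|\ob{Q}\cap\alpha|\geq 1$ and the connectivity of $\ob{Q}$. Of the three possible conclusions, (3) would produce a proper connect summand of $M$ that is a rational homology ball of Heegaard genus at most $2$ with torsion in $H_1$, directly forbidden by (B); while conclusion (2), $I(Q)\geq 2|\boundary Q\cap b|$, rearranges on substituting the displayed identities to the desired inequality
\[
(\Delta-1)|\ob{Q}\cap\alpha| \leq -\chi(\ob{Q}).
\]

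It remains to exclude conclusion (1), a Gabai disc for $Q$ in $M$. By Lemma \ref{Sch cycles exist} any such disc carries a Scharlemann cycle in its intersection graph with $Q$; Lemma \ref{Sch cycle implies rational cobordism} then produces a surface $\ob{R}$ rationally cobordant to $\ob{Q}$ with $|\ob{R}\cap\alpha|=|\ob{Q}\cap\alpha|-2$, having the same genus as $\ob{Q}$ by Lemma \ref{Alg. Top.}. For $g\geq 1$: if $\ob{R}$ is rationally essential, the minimality of $|\ob{Q}\cap\alpha|$ is violated, and otherwise $\ob{R}$ is rationally cobordant to an inessential surface, which by transitivity of rational cobordism forces $\ob{Q}$ itself to be rationally inessential, contradicting the hypothesis. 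For $g=0$, the length-$q\geq 2$ Scharlemann cycle combined with Lemma \ref{Alg. Top.}(6) produces a lens space summand of the cobordism $W$, hence a lens space summand of $M'$, contradicting (A).

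The main technical obstacle I anticipate is the boundary case $|\ob{Q}\cap\alpha|=1$, where the Scharlemann-cycle reduction cannot reduce the intersection count below zero and Lemma \ref{Sch cycle implies rational cobordism} does not apply. I expect to address it by a direct labelling analysis on the Gabai disc (in the spirit of the proof of Lemma \ref{Sch cycles exist}) showing that when $\boundary Q$ is a single $a$-parallel circle, no $\lambda_i$-cycle of length at least two can arise, so conclusion (1) of Theorem \ref{Sutured Mfld Thm} is vacuously excluded.
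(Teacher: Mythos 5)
Your proposal follows the same route as the paper's proof: minimize $|\ob{Q}\cap\alpha|$ within the rational cobordism class, equip $N$ with a taut sutured structure whose sutures miss the filling torus $T$, feed the $\beta$-taut $(M,\gamma,\beta)$ and the parameterizing surface $Q=\ob{Q}\cap N$ into Theorem~\ref{Sutured Mfld Thm}, discharge conclusion~(3) using~(B) and conclusion~(1) using the Scharlemann-cycle construction of Section~\ref{Sec: Construction}, and read off the inequality from conclusion~(2). Your index bookkeeping ($I(Q)=-2\chi(Q)$ since $\gamma$ misses $T$ and $\beta$ is a knot) is correct and in fact cleaner than the paper's own sentence, which contains a small slip writing $I(Q)=-2\chi(Q)+\Delta|\boundary Q|$.

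Two comments on where your proposal departs, and whether those departures cause trouble. First, the $g=0$ versus $g\geq1$ case split in your Gabai-disc exclusion is unnecessary: the dichotomy ``$\ob{R}$ rationally essential contradicts minimality; $\ob{R}$ rationally inessential plus transitivity contradicts the hypothesis on $\ob{Q}$'' is precisely what the paper uses and it applies uniformly in all genera, so the separate $g=0$ argument can be dropped. If you do keep the $g=0$ lens-space route, note that you have silently assumed the Scharlemann cycle has length $q\geq2$; a length-$1$ cycle gives a product cobordism by Lemma~\ref{Alg. Top.}(4), which is excluded by minimality of $|\ob{Q}\cap\alpha|$, so that case still closes, but it needs saying. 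Second, your identification of the boundary case $|\ob{Q}\cap\alpha|=1$ as a potential gap is a good catch---the paper's Lemma~\ref{Rationally Essential} and Lemma~\ref{Sch cycle implies rational cobordism} are stated as if the Scharlemann-cycle reduction always applies, and the tube construction of Lemma~\ref{Alg. Top.} would genuinely break down if the two consecutive spanning arcs $\lambda_j,\lambda_{j+1}$ lay on a single $\boundary Q$-circle. Your proposed fix is the right idea and in fact can be sharpened: when $\boundary Q$ is a single circle parallel to $a$, all $\mu=\Delta$ spanning arcs of $\boundary Q\cap A$ traverse $A$ in the same direction (since $a$ and $b$ intersect minimally on a torus, all crossings have one sign), so by Observation~1 (the parity rule) in the proof of Lemma~\ref{Sch cycles exist} there are \emph{no} arcs of $D\cap Q$ with both endpoints on $T$, let alone $\lambda_i$-cycles. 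But each meridian vertex of $\Gamma_P$ then contributes $\mu$ edge-endpoints that can only be absorbed by boundary edges, of which there are fewer than $\mu$, a contradiction; hence no Gabai disc exists at all when $|\ob{Q}\cap\alpha|=1$, and the reduction never needs to fire. With that observation inserted, your proof is complete and essentially equivalent to the paper's.
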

\begin{remark}
The hypothesis that $N$ does not have more than 2 components that are of genus 2 or more is not critical as we can often reduce to that case by gluing a copy of $M'$ to $N$ along all the boundary components disjoint from both $\boundary \ob{Q}$ and a boundary compressing disc for $M$, if such exists. We do wonder, however, whether or not hypothesis (B) is needed.
\end{remark}
\begin{proof}
We may assume that $\ob{Q} \subset M'$ was chosen to minimize $|\ob{Q} \cap \alpha|$ out of all essential genus $g$ surfaces in $M'$ in the given rational cobordism class. By hypothesis, $\ob{Q} \cap \alpha \neq \nil$. By Lemmas \ref{Alg. Top.} and \ref{Rationally Essential}, either $Q$ is essential in $N$ and there is no Gabai disc for $Q$ in $M$ or $\ob{Q}$ is rationally inessential. 

By Theorem \ref{Lack-sutures}, we can choose sutures $\gamma \subset \boundary M$ so that $(N,\gamma)$ is a taut sutured manifold. In fact, we claim that we can do this so that the boundary component $T$ of $N$ corresponding to $\beta$ is disjoint from $\gamma$. To see that this is possible, double $N$ along $T$ and then apply Theorem \ref{Lack-sutures}. Since $T$ is an incompressible torus in the resulting manifold, cutting along it will not destroy tautness. If $M$ is a solid torus, we choose $\gamma$ differently: choose $\gamma$ to be any two parallel simple closed curves on $\boundary M$. Since $N$ is irreducible, $(M,\gamma,\beta)$ will be $\beta$-taut.

Consequently, $(M,\gamma,\beta)$ is a $\beta$-taut sutured manifold. Since $M$ is reducible or has an exceptional second homology class, the hypotheses of Theorem \ref{Sutured Mfld Thm} are satisfied. By that theorem, together with hypothesis (B) and the absence of Gabai discs for $Q$, we conclude that $I(Q) \geq 2\mu$. Since $\ob{Q}$ is a closed surface, $I(Q) = -2\chi(Q) + \Delta|\boundary Q|$.

Consequently,
\[
-2\chi(\ob{Q}) + 2|\boundary Q| \geq 2\Delta|\boundary Q|.
\]
Rewriting the inequality into the desired form is an easy exercise for your favorite schoolchild.
\end{proof}

\begin{theorem}[The exceptional surgery theorem]\label{Main Thm 2}
Assume the following:
\begin{itemize}
\item $N$ is irreducible and boundary-irreducible
\item $H_2(M,\boundary M) \neq 0$, $M$ is irreducible, and $(M,\beta)$ has an exceptional class
\end{itemize}
Then the following hold:
\begin{enumerate}
\item $M'$ is boundary-irreducible.
\item If $M'$ has an essential sphere, then $M'$ has a lens space proper summand.
\item If $M'$ is irreducible and has an essential torus, then either $\Delta = 1$ or there is an essential torus bounding (possibly with a component of $\boundary M'$) a submanifold of $M'$ of Heegaard genus 2.
\item Assume that $M'$ is irreducible and atoroidal and that $\boundary M'$ has at most two components of genus 2 or greater. Then if $M'$ has an essential annulus, either $\Delta = 1$ or $\boundary M'$ is a single torus and $M'$ has Heegaard genus 2.
\end{enumerate}

\end{theorem}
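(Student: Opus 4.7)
My plan is to attack each of the four assertions by assuming the corresponding essential surface $\ob{Q} \subset M'$ (a compressing disc for (1), an essential sphere for (2), an essential torus for (3), an essential annulus for (4)), chosen so that $|\ob{Q} \cap \alpha|$ is minimal in its rational cobordism class, and then appealing to the dichotomy in Corollary \ref{Sphere/Torus}. One branch of that dichotomy yields the geometric conclusion asserted by the theorem directly (lens space proper summand of $M'$, an essential torus bounding a Heegaard genus 2 submanifold, or $\boundary M'=T^2$ with $M'$ of Heegaard genus 2). The other branch produces $\ob{Q}$ for which $Q = \ob{Q} \cap N$ is essential in $N$ and admits no Gabai disc in $M$, which is precisely what Theorem \ref{Sutured Mfld Thm} needs. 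Irreducibility of $M$ makes hypothesis (B) of Theorem \ref{Main Thm 1} automatic, while (C), (D) and the exceptional-class hypothesis are inherited from Theorem \ref{Main Thm 2}; hypothesis (A) is only violated when $M'$ has a lens space summand, which is exactly the alternative allowed in part (2).

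Once the sutured manifold conclusion $I(Q) \geq 2|\boundary Q \cap b|$ is available, I would substitute $I(Q) = -2\chi(\ob{Q}) + 2|\ob{Q} \cap \alpha|$ and $|\boundary Q \cap b| = \Delta|\ob{Q} \cap \alpha|$ to recover
\[
(\Delta - 1)|\ob{Q} \cap \alpha| \leq -\chi(\ob{Q}),
\]
the same inequality that drives Theorem \ref{Main Thm 1}. Plugging in the Euler characteristic of each surface type settles every remaining case: for a disc ($-\chi = -1$) and for a sphere ($-\chi = -2$) the inequality is absurd as soon as $|\ob{Q} \cap \alpha| \geq 1$, so those surfaces are ruled out; for a torus or annulus ($-\chi = 0$) it forces $\Delta = 1$.

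The main obstacle will be the two places where the packaged machinery does not apply off the shelf. First, Theorem \ref{Main Thm 1} is stated for closed surfaces with no representative disjoint from $\alpha$, so for the compressing disc in part (1) I would have to repeat the sutured manifold step by hand: place taut sutures $\gamma \subset \boundary M$ via Proposition \ref{Lack-sutures} (or Proposition \ref{Scharl-sutures}), carefully account for the $|\boundary Q \cap \gamma|$ term contributed by $\boundary \ob{Q} \subset \boundary M'$, and use boundary-irreducibility of $N$ to guarantee that the minimization gives $|\ob{Q} \cap \alpha| \geq 1$. Second, Corollary \ref{Sphere/Torus} is stated with $N$ hyperbolic, while Theorem \ref{Main Thm 2} only posits $N$ irreducible and boundary-irreducible; when an essential torus or annulus of $M'$ already lies inside $N$ (hence is disjoint from $\alpha$) the sutured manifold inequality is vacuous because $\mu = 0$. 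In part (3) I would handle this case separately by exhibiting a Heegaard genus 2 submanifold of $M'$ containing the pre-existing essential torus on its boundary, and in part (4) the restriction that $\boundary M'$ have at most two components of genus $\geq 2$ corrals the residual annular configurations into the single-torus, Heegaard genus 2 conclusion.
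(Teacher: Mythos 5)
Your overall strategy matches the paper's proof of Theorem~\ref{Main Thm 2} almost exactly: minimize $|\ob{Q}\cap\alpha|$ in a rational cobordism class, invoke Corollary~\ref{Sphere/Torus} to either land in the ``geometric'' alternative or produce a $Q$ that is essential in $N$ with no Gabai disc, place sutures, feed the result into Theorem~\ref{Sutured Mfld Thm}, and then read off the inequality $-\chi(\ob{Q})\geq(\Delta-1)|\ob{Q}\cap\alpha|$ to kill the sphere and disc cases and to force $\Delta=1$ for tori and annuli. The one point you flag as ``account carefully for $|\boundary Q\cap\gamma|$'' is resolved exactly the way the paper does it: apply Proposition~\ref{Scharl-sutures} (not Proposition~\ref{Lack-sutures}, which gives no control over where the sutures land) to choose $\gamma$ disjoint from $\boundary\ob{Q}$, so that the $|\boundary Q\cap\gamma|$ term in the index vanishes and $I(Q)=-2\chi(\ob{Q})+2|\ob{Q}\cap\alpha|$ holds for the disc and annulus just as for closed surfaces. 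It is for this reason---not the one you give---that part~(4) carries the hypothesis that $\boundary M'$ has at most two components of genus $\geq 2$: it is the hypothesis of Proposition~\ref{Scharl-sutures}.

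The second obstacle you raise is the genuinely delicate point, and your proposed workaround does not close it. If $N$ is merely irreducible and boundary-irreducible, it may contain an essential torus (or annulus) $T$ that remains essential in $M'$ and is disjoint from $\alpha$; in that case $\mu=0$, the sutured manifold inequality is vacuous, and there is simply no reason such a $T$ should bound a Heegaard genus~2 submanifold of $M'$---``exhibiting'' one is not possible in general, and the two-components-of-genus-$\geq 2$ restriction in part~(4) does nothing to ``corral'' a pre-existing essential annulus in $N$. The hyperbolicity hypothesis in Corollary~\ref{Sphere/Torus} (more precisely: $N$ atoroidal for part~(3), $N$ anannular for part~(4)) is precisely what guarantees that any essential torus or annulus in $M'$ must meet $\alpha$, so that the minimization gives $|\ob{Q}\cap\alpha|\geq 1$. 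For parts~(1) and~(2) the stated boundary-irreducibility and irreducibility of $N$ already suffice to exclude a disc or sphere disjoint from $\alpha$, so no extra assumption is needed there. Either you should carry along the hyperbolicity (or atoroidal/anannular) assumption on $N$ when proving~(3) and~(4), or you need an entirely separate argument for essential tori and annuli lying in $N$; the fix you sketch is not it.
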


\begin{proof}
The proof is essentially the same as that of Theorem \ref{Main Thm 1}. By Lemma \ref{Sphere/Torus}, we may assume that there is an essential surface $\ob{Q}$ such that $Q = \ob{Q} \cap N$ has at least one boundary component on $\boundary \eta(\alpha)$, is essential in $N$, and has no Gabai discs in $M'$. The surface $\ob{Q}$ is a sphere, disc, annulus, or torus according to whether or not we are after conclusion (1), (2), or (3).

By Theorem \ref{Scharl-sutures}, we may pick sutures $\gamma$ disjoint from $\boundary \ob{Q}$.  If $M$ is a solid torus, as in Theorem \ref{Main Thm 1}, choose $\gamma$ to be any two parallel simple closed curves on $\boundary M$ disjoint from $\boundary \ob{Q}$.  As in the proof of Theorem \ref{Main Thm 1}, Theorem \ref{Sutured Mfld Thm} tells us that 
\[
-\chi(\ob{Q}) \geq |\boundary Q| (\Delta-1).
\]
If $\ob{Q}$ is a sphere or disc, then we contradict the fact that $\Delta \geq 1$. If $\ob{Q}$ is an annulus or torus, then we have $\Delta = 1$.
\end{proof}

\begin{bibdiv}
\begin{biblist}

\bib{Agol}{article}{
   author={Agol, Ian},
   title={Bounds on exceptional Dehn filling},
   journal={Geom. Topol.},
   volume={4},
   date={2000},
   pages={431--449},
   issn={1465-3060},
   review={\MR{1799796 (2001j:57019)}},
   doi={10.2140/gt.2000.4.431},
}

\bib{BZ}{article}{
   author={Boyer, S.},
   author={Zhang, X.},
   title={Reducing Dehn filling and toroidal Dehn filling},
   journal={Topology Appl.},
   volume={68},
   date={1996},
   number={3},
   pages={285--303},
   issn={0166-8641},
   review={\MR{1377050 (97f:57018)}},
   doi={10.1016/0166-8641(95)00061-5},
}

\bib{CM}{article}{
   author={Canary, Richard D.},
   author={McCullough, Darryl},
   title={Homotopy equivalences of 3-manifolds and deformation theory of
   Kleinian groups},
   journal={Mem. Amer. Math. Soc.},
   volume={172},
   date={2004},
   number={812},
   pages={xii+218},
   issn={0065-9266},
   review={\MR{2096234 (2005j:57027)}},
}

\bib{CG}{article}{
   author={Casson, A. J.},
   author={Gordon, C. McA.},
   title={Reducing Heegaard splittings},
   journal={Topology Appl.},
   volume={27},
   date={1987},
   number={3},
   pages={275--283},
   issn={0166-8641},
   review={\MR{918537 (89c:57020)}},
   doi={10.1016/0166-8641(87)90092-7},
}

\bib{CGLS}{article}{
   author={Culler, Marc},
   author={Gordon, C. McA.},
   author={Luecke, J.},
   author={Shalen, Peter B.},
   title={Dehn surgery on knots},
   journal={Ann. of Math. (2)},
   volume={125},
   date={1987},
   number={2},
   pages={237--300},
   issn={0003-486X},
   review={\MR{881270 (88a:57026)}},
   doi={10.2307/1971311},
}

\bib{EM}{article}{
   author={Eudave-Mu{\~n}oz, Mario},
   title={Non-hyperbolic manifolds obtained by Dehn surgery on hyperbolic
   knots},
   conference={
      title={Geometric topology},
      address={Athens, GA},
      date={1993},
   },
   book={
      series={AMS/IP Stud. Adv. Math.},
      volume={2},
      publisher={Amer. Math. Soc.},
      place={Providence, RI},
   },
   date={1997},
   pages={35--61},
   review={\MR{1470720 (98i:57007)}},
}

\bib{G1}{article}{
   author={Gabai, David},
   title={Foliations and the topology of $3$-manifolds},
   journal={J. Differential Geom.},
   volume={18},
   date={1983},
   number={3},
   pages={445--503},
   issn={0022-040X},
   review={\MR{723813 (86a:57009)}},
}

\bib{G2}{article}{
   author={Gabai, David},
   title={Foliations and the topology of $3$-manifolds. II},
   journal={J. Differential Geom.},
   volume={26},
   date={1987},
   number={3},
   pages={461--478},
   issn={0022-040X},
   review={\MR{910017 (89a:57014a)}},
}

\bib{G3}{article}{
   author={Gabai, David},
   title={Foliations and the topology of $3$-manifolds. III},
   journal={J. Differential Geom.},
   volume={26},
   date={1987},
   number={3},
   pages={479--536},
   issn={0022-040X},
   review={\MR{910018 (89a:57014b)}},
}

\bib{GAS}{article}{
   author={Gonz{\'a}lez-Acu{\~n}a, Francisco},
   author={Short, Hamish},
   title={Knot surgery and primeness},
   journal={Math. Proc. Cambridge Philos. Soc.},
   volume={99},
   date={1986},
   number={1},
   pages={89--102},
   issn={0305-0041},
   review={\MR{809502 (87c:57003)}},
   doi={10.1017/S0305004100063969},
}


%

\bib{Go-conj}{article}{
   author={Gordon, C. McA.},
   title={Dehn filling: a survey},
   conference={
      title={Knot theory},
      address={Warsaw},
      date={1995},
   },
   book={
      series={Banach Center Publ.},
      volume={42},
      publisher={Polish Acad. Sci. Inst. Math., Warsaw},
   },
   date={1998},
   pages={129--144},
   review={\MR{1634453 (99e:57028)}},
}

\bib{Go-toroidal}{article}{
   author={Gordon, C. McA.},
   title={Boundary slopes of punctured tori in $3$-manifolds},
   journal={Trans. Amer. Math. Soc.},
   volume={350},
   date={1998},
   number={5},
   pages={1713--1790},
   issn={0002-9947},
   review={\MR{1390037 (98h:57032)}},
   doi={10.1090/S0002-9947-98-01763-2},
}

\bib{Go-survey}{article}{
   author={Gordon, Cameron},
   title={Dehn surgery and 3-manifolds},
   conference={
      title={Low dimensional topology},
   },
   book={
      series={IAS/Park City Math. Ser.},
      volume={15},
      publisher={Amer. Math. Soc.},
      place={Providence, RI},
   },
   date={2009},
   pages={21--71},
   review={\MR{2503492 (2010i:57015)}},
}

\bib{Go-survey}{article}{
   author={Gordon, Cameron},
   title={Dehn surgery and 3-manifolds},
   conference={
      title={Low dimensional topology},
   },
   book={
      series={IAS/Park City Math. Ser.},
      volume={15},
      publisher={Amer. Math. Soc.},
      place={Providence, RI},
   },
   date={2009},
   pages={21--71},
   review={\MR{2503492 (2010i:57015)}},
}

\bib{GLith}{article}{
   author={Gordon, C. McA.},
   author={Litherland, R. A.},
   title={Incompressible planar surfaces in $3$-manifolds},
   journal={Topology Appl.},
   volume={18},
   date={1984},
   number={2-3},
   pages={121--144},
   issn={0166-8641},
   review={\MR{769286 (86e:57013)}},
   doi={10.1016/0166-8641(84)90005-1},
}

\bib{GL-reducible1}{article}{
   author={Gordon, C. McA.},
   author={Luecke, J.},
   title={Only integral Dehn surgeries can yield reducible manifolds},
   journal={Math. Proc. Cambridge Philos. Soc.},
   volume={102},
   date={1987},
   number={1},
   pages={97--101},
   issn={0305-0041},
   review={\MR{886439 (89a:57003)}},
   doi={10.1017/S0305004100067086},
}

\bib{GL0}{article}{
   author={Gordon, C. McA.},
   author={Luecke, J.},
   title={Knots are determined by their complements},
   journal={J. Amer. Math. Soc.},
   volume={2},
   date={1989},
   number={2},
   pages={371--415},
   issn={0894-0347},
   review={\MR{965210 (90a:57006a)}},
   doi={10.2307/1990979},
}

\bib{GL}{article}{
   author={Gordon, C. McA.},
   author={Luecke, J.},
   title={Dehn surgeries on knots creating essential tori. I},
   journal={Comm. Anal. Geom.},
   volume={3},
   date={1995},
   number={3-4},
   pages={597--644},
   issn={1019-8385},
   review={\MR{1371211 (96k:57003)}},
}

\bib{GL-reducible2}{article}{
   author={Gordon, C. McA.},
   author={Luecke, J.},
   title={Reducible manifolds and Dehn surgery},
   journal={Topology},
   volume={35},
   date={1996},
   number={2},
   pages={385--409},
   issn={0040-9383},
   review={\MR{1380506 (97b:57013)}},
   doi={10.1016/0040-9383(95)00016-X},
}

\bib{GL2}{article}{
   author={Gordon, C. McA.},
   author={Luecke, J.},
   title={Dehn surgeries on knots creating essential tori. II},
   journal={Comm. Anal. Geom.},
   volume={8},
   date={2000},
   number={4},
   pages={671--725},
   issn={1019-8385},
   review={\MR{1792371 (2002b:57003)}},
}

\bib{GL3}{article}{
   author={Gordon, C. McA.},
   author={Luecke, John},
   title={Non-integral toroidal Dehn surgeries},
   journal={Comm. Anal. Geom.},
   volume={12},
   date={2004},
   number={1-2},
   pages={417--485},
   issn={1019-8385},
   review={\MR{2074884 (2005k:57013)}},
}

\bib{GW-memoirs}{article}{
   author={Gordon, Cameron McA.},
   author={Wu, Ying-Qing},
   title={Toroidal Dehn fillings on hyperbolic 3-manifolds},
   journal={Mem. Amer. Math. Soc.},
   volume={194},
   date={2008},
   number={909},
   pages={vi+140},
   issn={0065-9266},
   review={\MR{2419168 (2009c:57036)}},
}

\bib{HS}{article}{
   author={Hayashi, Chuichiro},
   author={Shimokawa, Koya},
   title={Thin position of a pair (3-manifold, 1-submanifold)},
   journal={Pacific J. Math.},
   volume={197},
   date={2001},
   number={2},
   pages={301--324},
   issn={0030-8730},
   review={\MR{1815259 (2002b:57020)}},
   doi={10.2140/pjm.2001.197.301},
}

\bib{HS-class}{article}{
   author={Hayashi, Chuichiro},
   author={Shimokawa, Koya},
   title={Heegaard splittings of trivial arcs in compression bodies},
   journal={J. Knot Theory Ramifications},
   volume={10},
   date={2001},
   number={1},
   pages={71--87},
   issn={0218-2165},
   review={\MR{1822141 (2002b:57019)}},
   doi={10.1142/S021821650100069X},
}


\bib{Kang}{article}{
   author={Kang, Sungmo},
   title={Reducible and toroidal Dehn fillings with distance 3},
   journal={Topology},
   volume={47},
   date={2008},
   number={5},
   pages={277--315},
   issn={0040-9383},
   review={\MR{2422529 (2010g:57021)}},
   doi={10.1016/j.top.2007.03.006},
}

\bib{L}{article}{
   author={Lackenby, Marc},
   title={Surfaces, surgery and unknotting operations},
   journal={Math. Ann.},
   volume={308},
   date={1997},
   number={4},
   pages={615--632},
   issn={0025-5831},
   review={\MR{1464913 (98j:57028)}},
   doi={10.1007/s002080050093},
}

\bib{L2}{article}{
   author={Lackenby, Marc},
   title={Dehn surgery on knots in $3$-manifolds},
   journal={J. Amer. Math. Soc.},
   volume={10},
   date={1997},
   number={4},
   pages={835--864},
   issn={0894-0347},
   review={\MR{1443548 (98h:57034)}},
   doi={10.1090/S0894-0347-97-00241-5},
}

\bib{Lackenby-word}{article}{
   author={Lackenby, Marc},
   title={Word hyperbolic Dehn surgery},
   journal={Invent. Math.},
   volume={140},
   date={2000},
   number={2},
   pages={243--282},
   issn={0020-9910},
   review={\MR{1756996 (2001m:57003)}},
   doi={10.1007/s002220000047},
}

\bib{LM}{article}{
   author={Lackenby, Marc},
   author={Meyerhoff, Robert},
   title={The maximal number of exceptional Dehn surgeries},
   journal={Invent. Math.},
   volume={191},
   date={2013},
   number={2},
   pages={341--382},
   issn={0020-9910},
   review={\MR{3010379}},
   doi={10.1007/s00222-012-0395-2},
}

\bib{LOT}{article}{
   author={Lee, Sangyop},
   author={Oh, Seungsang},
   author={Teragaito, Masakazu},
   title={Reducing Dehn fillings and small surfaces},
   journal={Proc. London Math. Soc. (3)},
   volume={92},
   date={2006},
   number={1},
   pages={203--223},
   issn={0024-6115},
   review={\MR{2192390 (2007b:57028)}},
   doi={10.1017/S002461150501542X},
}

\bib{P1}{article}{
author={Perelman, Grisha}
title = {The entropy formula for the Ricci flow and its geometric applications}
eprint={arXiv:math/0211159v1 [math.DG]}
}

\bib{P2}{article}{
author={Perelman, Grisha}
title = {Ricci flow with surgery on 3-manifolds}
eprint={	arXiv:math/0303109v1 [math.DG]}
}

\bib{P3}{article}{
author={Perelman, Grisha}
title = {Finite extinction time for the solutions to the Ricci flow on certain 3-manifolds}
eprint={	arXiv:math/0307245v1 [math.DG]}
}

\bib{S-smooth}{article}{
   author={Scharlemann, Martin},
   title={Smooth spheres in ${\bf R}\sp 4$ with four critical points are
   standard},
   journal={Invent. Math.},
   volume={79},
   date={1985},
   number={1},
   pages={125--141},
   issn={0020-9910},
   review={\MR{774532 (86e:57010)}},
   doi={10.1007/BF01388659},
}

\bib{S-unknotting}{article}{
   author={Scharlemann, Martin G.},
   title={Unknotting number one knots are prime},
   journal={Invent. Math.},
   volume={82},
   date={1985},
   number={1},
   pages={37--55},
   issn={0020-9910},
   review={\MR{808108 (86m:57010)}},
   doi={10.1007/BF01394778},
}

\bib{S1}{article}{
   author={Scharlemann, Martin},
   title={Sutured manifolds and generalized Thurston norms},
   journal={J. Differential Geom.},
   volume={29},
   date={1989},
   number={3},
   pages={557--614},
   issn={0022-040X},
   review={\MR{992331 (90e:57021)}},
}

\bib{S2}{article}{
   author={Scharlemann, Martin},
   title={Producing reducible $3$-manifolds by surgery on a knot},
   journal={Topology},
   volume={29},
   date={1990},
   number={4},
   pages={481--500},
   issn={0040-9383},
   review={\MR{1071370 (91i:57003)}},
   doi={10.1016/0040-9383(90)90017-E},
}

\bib{S3}{article}{
   author={Scharlemann, Martin G.},
   title={Lectures on the theory of sutured $3$-manifolds},
   conference={
      title={Algebra and topology 1990},
      address={Taejon},
      date={1990},
   },
   book={
      publisher={Korea Adv. Inst. Sci. Tech.},
      place={Taej\u on},
   },
   date={1990},
   pages={25--45},
   review={\MR{1098719 (92f:57022)}},
}

\bib{ST}{article}{
   author={Scharlemann, Martin},
   author={Thompson, Abigail},
   title={Heegaard splittings of $({\rm surface})\times I$ are standard},
   journal={Math. Ann.},
   volume={295},
   date={1993},
   number={3},
   pages={549--564},
   issn={0025-5831},
   review={\MR{1204837 (94b:57020)}},
   doi={10.1007/BF01444902},
}

%

\bib{T1}{article}{
   author={Taylor, Scott A.},
   title={Boring split links},
   journal={Pacific J. Math.},
   volume={241},
   date={2009},
   number={1},
   pages={127--167},
   issn={0030-8730},
   review={\MR{2485461 (2010i:57025)}},
   doi={10.2140/pjm.2009.241.127},
}

\bib{T2}{article}{
   author={Taylor, Scott A.},
   title={Band-taut sutured manifolds},
  eprint={arXiv:1109.5220},
}

\bib{T3}{article}{
   author={Taylor, Scott A.},
   title={Comparing 2-handle additions to a genus 2 boundary component},
  eprint={arXiv:0806.1572},
}

\bib{Teragaito}{article}{
   author={Teragaito, Masakazu},
   title={Toroidal Dehn surgery on hyperbolic knots and hitting number},
   journal={Topology Appl.},
   volume={157},
   date={2010},
   number={1},
   pages={269--273},
   issn={0166-8641},
   review={\MR{2556104 (2011d:57023)}},
   doi={10.1016/j.topol.2009.04.037},
}

\bib{Thurston}{article}{
   author={Thurston, William P.},
   title={Three-dimensional manifolds, Kleinian groups and hyperbolic
   geometry},
   journal={Bull. Amer. Math. Soc. (N.S.)},
   volume={6},
   date={1982},
   number={3},
   pages={357--381},
   issn={0273-0979},
   review={\MR{648524 (83h:57019)}},
   doi={10.1090/S0273-0979-1982-15003-0},
}

\bib{VS}{article}{
   author={Valdez-S{\'a}nchez, Luis G.},
   title={Toroidal and Klein bottle boundary slopes},
   journal={Topology Appl.},
   volume={154},
   date={2007},
   number={3},
   pages={584--603},
   issn={0166-8641},
   review={\MR{2280903 (2007j:57025)}},
   doi={10.1016/j.topol.2006.08.001},
}

\bib{Wu-sutured}{article}{
   author={Wu, Ying-Qing},
   title={Sutured manifold hierarchies, essential laminations, and Dehn
   surgery},
   journal={J. Differential Geom.},
   volume={48},
   date={1998},
   number={3},
   pages={407--437},
   issn={0022-040X},
   review={\MR{1638025 (99h:57043)}},
}

\end{biblist}
\end{bibdiv} 

\end{document}